\numberwithin{equation}{section}
\numberwithin{figure}{section}
\theoremstyle{plain}
\newtheorem{thm}{Theorem}[section]
\theoremstyle{plain}
\newtheorem*{thm*}{Theorem}
\theoremstyle{definition}
\theoremstyle{definition}
\newtheorem*{defn*}{Defnition}
\theoremstyle{remark}
\newtheorem{rem}[thm]{Remark}
\theoremstyle{remark}
\newtheorem*{rem*}{Remark}
\theoremstyle{plain}
\newtheorem{lem}[thm]{Lemma}
\theoremstyle{plain}
\newtheorem{prop}[thm]{Proposition}
\theoremstyle{plain}
\theoremstyle{plain}
\newtheorem{construction}[thm]{Construction}
\begin{document}

\title[Hausdorff Dimension of Sets of Continued Fractions]{Hausdorff Dimension of Sets of Continued Fractions with Bounded Odd and Even Order Partial Quotients}

\author{\noindent Yuefeng Tang}

\keywords{continued fraction, bounded partial quotients, Hausdorff dimension.}

\begin{abstract}
	We study the continued fractions with bounded odd/even-order partial quotients. In particular, we investigate the sizes of the sets of continued fractions whose odd-order partial quotients are equal to 1. We demonstrate that the sum and the product of two sets of continued fractions whose odd-order partial quotients are equal to 1 both contain non-empty intervals. Our work compliments the results of Han\v{c}l and Turek \cite{MR4632209} on the set of continued fractions whose even-order partial quotients are equal to 1. Furthermore, we determine the Hausdorff dimensions of the sets of continued fractions whose odd-order partial quotients are equal to 1 and even-order partial quotients are growing at an exponential rate, a super-exponential rate, and in general a positive function rate.
\end{abstract}

\maketitle

\tableofcontents

\section{Introduction}\label{1}

\subsection{Background and history}

It is known that every irrational number $x\in[0,1)$ admits a unique infinite continued fraction induced by Gauss transformation $T\colon [0,1)\rightarrow[0,1)$ given by
\begin{equation*}
	T(0)\coloneqq 0,\quad T(x)\coloneqq\frac{1}{x}\ (\textnormal{mod}\ 1)\quad \text{for}\ x\in(0,1). 
\end{equation*}
We write shortly $x=[a_0(x);a_1(x),a_2(x),\ldots]$ for the continued fraction expansion of $x$ as follows:
\begin{equation*}
	x=a_0(x)+\cfrac{1}{a_1(x)+\cfrac{1}{a_2(x)+\cdots}},
\end{equation*}
where $a_0(x)=[x]$, $a_1(x)=[\frac{1}{x}]$, and $a_n(x)=a_1(T^{n-1}(x))$ for $n\geq 2$ are called \textit{partial quotients} of $x$. For each real number $x$, the sequence of partial quotients $\{a_n(x)\}_{n\geq1}$ is either infinite, when $x$ is irrational, or finite, when $x$ is rational. Truncating the continued fractions at its $k$th term, we obtain an irreducible rational number
\begin{equation*}
	\frac{p_k(x)}{q_k(x)}=[a_0(x);a_1(x),a_2(x),\ldots,a_k(x)],
\end{equation*}
called the $k$th \textit{convergent} of $x$.

The metrical theory of continued fractions, which is one of the major subjects in the study of continued fractions, has close connections with many fields, such as dynamical system, fractal geometry, ergodic theory, number theory, algebraic geometry etc. The metric property of sets of continued fractions, in particular the fractal dimensions of sets arising in continued fractions, is of vital position. 

Back in 1947, Hall \cite{hall} studied the problem how to express a real number as a sum or a product of continued fractions with bounded partial quotients. He proved that any real number $x$ can be represented by a sum of two continued fractions with partial quotients bounded by 4, i.e.,
\begin{equation*}
	x=[a_0;a_1,a_2,a_3,\ldots]+[b_0;b_1,b_2,b_3,\ldots],
\end{equation*}
where $a_j,b_j\leq4$ for all $j\geq1$. He also showed that any real number $x>1$ can be represented by a product of two continued fractions whose partial quotients are not more than 4. Hall's result inspired a series of improvements and generalizations, see \cite{astels1,astels3,astels2,cusick,divis,hlavka}. 

In 2023, Han\v{c}l and Turek \cite{MR4632209} studied the set of continued fractions with bounded even-order partial quotients, i.e.,
\begin{equation*}
	\{[a_0;a_1,a_2,a_3,\ldots]\colon a_{2k}\leq c,\ \text{for all}\ k\in\mathbb{N}\},
\end{equation*}
where $c>0$ is a fixed constant. Their ideas come from the study of the approximation to irrational numbers strictly from one side, which has a close relationship with the boundedness of the subsequences $\{a_{2j-1}\}_{j=1}^\infty$ and $\{a_{2j}\}_{j=1}^\infty$. Han\v{c}l and Turek \cite{MR4632209} proved that any real number $x$ can be represented by a sum of two continued fractions whose even-order partial quotients are equal to 1, and any positive real number $x>0$ can be represented by a product of two continued fractions whose even-order partial quotients are equal to 1. Motivated by their work, we study the property of the odd conjugation cases, i.e., the set of continued fractions whose odd-order partial quotients are equal to 1. Our first main results are as follows.

\begin{thm}\label{main1}
	Any real number $x\in\mathbb{Z}+[1/2,1]$ can be represented by a sum of two continued fractions whose odd-order partial quotients are equal to $1$, i.e.,\begin{equation}
		x=[a_0;1,a_2,1,a_4,1,a_6,\ldots]+[b_0;1,b_2,1,b_4,1,b_6,\ldots], 
	\end{equation}
	where $a_0,b_0\in\mathbb{Z}$, $a_i,b_i\in\mathbb{N}$, $i>0$.
\end{thm}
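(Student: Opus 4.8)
The plan is to strip the statement down to a single assertion about the sumset of a self-similar Cantor set and then prove that assertion by a direct recursive construction in the spirit of Hall~\cite{hall} and of Han\v{c}l--Turek~\cite{MR4632209}. Write $F\coloneqq\{[0;1,a_2,1,a_4,1,\dots]:a_{2k}\in\mathbb N\}$ for the set of fractional parts occurring as such continued fractions, and put $\gamma\coloneqq\tfrac{\sqrt5-1}{2}$. Grouping the first two partial quotients gives the self-similarity $F=\bigcup_{c\ge1}h_c(F)$, where $h_c(t)=\tfrac{c+t}{c+1+t}=1-\tfrac1{c+1+t}$ is an increasing contraction; consequently $\inf F=h_1(\gamma)=\gamma$ (attained by $[0;\overline1]$) and $\sup F=1$ (not attained), so $\overline F$ is a Cantor set with convex hull $[\gamma,1]$. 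Given $x\in\mathbb Z+[1/2,1]$, write $x=n+t$ with $n\in\mathbb Z$ and $t\in[1/2,1]$. Since each summand has the shape $a_0+g$ with $g\in F\subseteq(\gamma,1)$ and $2\gamma=\sqrt5-1<\tfrac32$, the only admissible choice of integer parts is $a_0+b_0=n-1$, so for $t<1$ the theorem is equivalent to
\begin{equation}\label{eq:red}
	[3/2,2)\subseteq F+F,
\end{equation}
with the two summands realized by honest continued fractions; the endpoint $t=1$ (integer $x$) cannot be reached by two infinite expansions of this type, but it is covered by the finite identity $n=[n-1;1,1]+[0;1,1]$, both of whose odd-order partial quotients equal $1$.

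To prove \eqref{eq:red} I would fix a target $s\in[3/2,2)$ and build $f_1=[0;1,a_2,1,a_4,\dots]$ and $f_2=[0;1,b_2,1,b_4,\dots]$ two partial quotients at a time, maintaining the invariant that, after $a_2,\dots,a_{2k}$ and $b_2,\dots,b_{2k}$ are fixed, the \emph{reachable set}
\[
R_k\coloneqq M^{(1)}_k(\overline F)+M^{(2)}_k(\overline F)
\]
is a closed interval containing $s$. Here $M^{(1)}_k(\xi)=\tfrac{p_{2k}+p_{2k-1}\xi}{q_{2k}+q_{2k-1}\xi}$ (with $p_j,q_j$ the convergents of $[0;1,a_2,1,\dots,1,a_{2k}]$), and similarly $M^{(2)}_k$, are the linear-fractional maps expressing the truncated fractions in terms of their free tails $\xi_1,\xi_2\in F$; each is a contraction with ratio $\asymp q_{2k}^{-2}$. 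Because $F=\bigcup_c h_c(F)$, prescribing $a_{2k+2}$ amounts to replacing $M^{(1)}_k$ by $M^{(1)}_k\circ h_{a_{2k+2}}$, so the inductive step is to pick $a_{2k+2},b_{2k+2}\in\mathbb N$ with
\[
s\in M^{(1)}_k h_{a_{2k+2}}(\overline F)+M^{(2)}_k h_{b_{2k+2}}(\overline F)=R_{k+1}\subseteq R_k,
\]
again a closed interval. Since $|R_k|\to0$, the nested intersection is $\{s\}$, and the partial quotients read off along the way produce $f_1,f_2\in F$ with $f_1+f_2=s$.

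The crux is the one-step covering lemma underlying this step: \emph{for every $s$ in the relevant range and every admissible pair $(M^{(1)},M^{(2)})$ of prefixes there exist $c,d\in\mathbb N$ with $s\in M^{(1)}h_c(\overline F)+M^{(2)}h_d(\overline F)$, this sub-sumset being a closed interval that contains $s$.} I expect this to be the hard part, and it is \emph{not} a formal consequence of Newhouse thickness: the largest gap of $\overline F$ sits between $h_1(\overline F)$ and $h_2(\overline F)$, namely $\bigl(\tfrac23,\tfrac{2+\gamma}{3+\gamma}\bigr)$, and there the thickness ratio equals $\gamma\bigl(1+\tfrac1{2+\gamma}\bigr)$, already $<1$; more generally $\tau(\overline F)=\gamma<1$, so neither $\overline F$ alone nor the Cantor set of a single continued fraction is thick enough (in contrast to Hall's $F_4$, whose thickness exceeds $1$), and the interval must be produced by genuinely moving both expansions. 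The mechanism I would pursue is to pair a thin block $h_c(\overline F)$ of one expansion against a much thicker piece of the other — since $h_c(t)=1-\tfrac1{c+1+t}$, the right end of $\overline F$ clusters at $1$ with thickness tending to $\infty$ — so that the product of the two thicknesses entering the gap lemma stays $\ge1$; the verification then reduces to an explicit check over finitely many small $c,d$ (using $h_c(\gamma)=\tfrac{c+\gamma}{c+1+\gamma}$, $h_c(1)=\tfrac{c+1}{c+2}$, and the gap lengths $\tfrac{\gamma}{(c+2)(c+2+\gamma)}$) together with a uniform estimate for large indices, where consecutive blocks overlap automatically. Establishing this lemma and checking that it is stable under the renormalization — so that the invariant of the previous paragraph truly propagates — is the main obstacle; the remaining steps are routine bookkeeping with convergents.

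Granting \eqref{eq:red}, the construction outputs infinite continued fractions $f_1=[0;1,a_2,1,a_4,\dots]$ and $f_2=[0;1,b_2,1,b_4,\dots]$ with $f_1+f_2=s=t+1$; taking $a_0+b_0=n-1$ then yields $x=[a_0;1,a_2,1,\dots]+[b_0;1,b_2,1,\dots]$, and together with the finite representation of integer $x$ this establishes the theorem. I would also expect the interval $[1/2,1]$ to be close to optimal: since $F+F\subseteq[2\gamma,2]$ and $\overline F$ is thin near its minimum $\gamma$, one anticipates that $F+F$ genuinely omits part of $[2\gamma,\tfrac32)$, so the left endpoint cannot simply be lowered to the a priori bound $\sqrt5-1$ — but this is not needed for the theorem.
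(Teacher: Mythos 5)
Your reduction of the theorem to the containment $[3/2,2)\subseteq F+F$, together with the separate treatment of the integer endpoint, is correct and coincides with what the paper does (its Proposition~\ref{sum}); your observation that $\tau(\overline F)<1$ so the Newhouse Gap Lemma does not apply off the shelf is accurate and genuinely useful context. But the proposal stops at the point where the work begins. The ``one-step covering lemma'' --- that for every target $s$ and every pair of admissible prefixes there exist $c,d$ with $s\in M^{(1)}h_c(\overline F)+M^{(2)}h_d(\overline F)$, \emph{and that this sub-sumset is again a closed interval} --- is stated, flagged as ``the hard part'' and ``the main obstacle,'' and then left unproven. Since your invariant is precisely that $R_k$ is a closed interval containing $s$, and $R_k$ is a M\"obius-distorted sum of two copies of the Cantor set $\overline F$, the claim that $R_{k+1}$ is an interval is not a routine step: it is essentially the local form of the assertion $\overline F+\overline F\supseteq[3/2,2)$ that you set out to prove, which is why the thickness obstruction you noticed matters. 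Your proposed mechanism (pair a thin $h_c$-block against the thick cluster near $1$ so that the product of local thicknesses across each gap stays $\ge1$) is a plausible route, but it is left as an unverified program: no uniform estimate is given, no finite check is carried out, and no argument is given that the thickness-product condition propagates under the renormalization $M^{(1)}_k\mapsto M^{(1)}_k\circ h_c$. As it stands this is an outline with the central step missing, not a proof.

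For comparison, the paper avoids thickness arguments entirely. It runs a greedy alternating construction (Construction~\ref{construction1}): at stage $k$ it first chooses $a_{2k}$ so that $x-[0;1,b_2,\ldots,b_{2k-2},1]$ is bracketed between $[0;1,a_2,\ldots,a_{2k}]$ and $[0;1,a_2,\ldots,a_{2k},1]$, then symmetrically chooses $b_{2k}$. The invariant maintained is thus a half-open bracket on $x$ in which one summand is fully truncated and the other carries a free tail, not a sumset of two Cantor sets. The heart of the proof is Lemma~3.5 (sic): if the greedy choice ever failed, stacking the three inequalities~(\ref{a1})--(\ref{a3}) (resp.~(\ref{b1})--(\ref{b3})) and using the convergent identities~(\ref{pre1})--(\ref{pre3}) forces a chain $a_{2k-2}=b_{2k-2}=\cdots=a_2=1$ that then contradicts the comparison~(\ref{a6}) (resp.~(\ref{b6})). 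So the paper replaces your covering lemma with an explicit impossibility-of-failure argument in the convergent algebra. If you want to complete your route instead, the thing you must actually supply is the gap-covering lemma with its renormalization stability, presumably via the Astels-style ``local thickness product $\geq 1$'' technology you allude to --- but that is precisely the content you have deferred, and without it the argument does not close.
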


\begin{thm}\label{main2}
	Any positive real number $x\in\mathbb{Z}+[1/2,1]$ can be represented by a product of two continued fractions whose odd-order partial quotients are equal to $1$, i.e.,\begin{equation}
		x=[a_0;1,a_2,1,a_4,1,a_6,\ldots]\cdot[b_0;1,b_2,1,b_4,1,b_6,\ldots],
	\end{equation}
	where $a_0,b_0\in\mathbb{N}$, $a_i,b_i\in\mathbb{N}$, $i>0$.
\end{thm}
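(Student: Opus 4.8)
The plan is to follow the proof of Theorem \ref{main1} --- itself in the tradition of Hall \cite{hall} and Han\v{c}l--Turek \cite{MR4632209} --- but with addition replaced by multiplication. Equivalently, via logarithms the problem becomes: the set $L=\bigcup_{m\ge1}\log(m+\mathcal F)$, where
\[
\mathcal F=\bigl\{[0;1,a_2,1,a_4,1,\ldots]:a_{2j}\in\mathbb N\ \text{for}\ j\ge1\bigr\}\subseteq\bigl(\tfrac12,1\bigr),
\]
should satisfy $\log x\in L+L$ for every admissible $x$; writing $y=a_0+f$, $z=b_0+g$ with $f,g\in\mathcal F$ and $x=yz$, this is exactly the relation $\log x=\log(a_0+f)+\log(b_0+g)$. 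Thus Theorem \ref{main2} asks that the ``multiplicative'' version of $\mathcal F$ cover the required intervals just as the additive version does in Theorem \ref{main1}.

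Concretely I would argue as follows. First fix the integer parts: given $x$ beyond an explicit bound, choose $a_0,b_0\asymp\sqrt x$ so that $x$ lies in the interior of $\bigl(a_0+[\tfrac12,1]\bigr)\bigl(b_0+[\tfrac12,1]\bigr)$ with room to spare on both sides --- possible because, with $a_0,b_0$ near $\sqrt x$, the integer-part products $(m+\tfrac12)(n+\tfrac12)$ form a set whose consecutive gaps near $x$ are $O(\sqrt x)$, dominated by the width $\asymp a_0+b_0$ of such a window. Then build the partial quotients of $f$ and $g$ one block at a time, maintaining the invariant that, after $k$ blocks have been fixed, $f$ lies in a cylinder interval $I_k$ of $\mathcal F$, $g$ lies in a cylinder interval $J_k$ of $\mathcal F$, and $x\in(a_0+I_k)(b_0+J_k)$, with $|I_k|,|J_k|\to0$. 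At each step the already-chosen block of $f$ pins down $I_k$, hence constrains the other factor to the interval $K_k:=x/(a_0+I_k)$; one then chooses the next block of $g$ so that $b_0+J_{k+1}$ lies in a neighbourhood of $K_k$, and symmetrically for $f$. This is driven by the same ``thickness''/fitting property of $\mathcal F$ that makes Theorem \ref{main1} work --- packaged, if one prefers, as: a suitable sub-Cantor-set $\mathcal F_*\subseteq\mathcal F$ (restricting the even-order partial quotients to a digit block à la Astels \cite{astels1,astels2}) has Newhouse thickness $>1$, so that after the bounded, nearly-affine distortion $f\mapsto\log(a_0+f)$ the gap lemma still applies. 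A final bookkeeping step handles the countably many rationals in $\overline{\mathcal F}\setminus\mathcal F$ (one has an open window of factorizations and can avoid them) and the finitely many small values of $x$.

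The step I expect to be the main obstacle is the multiplicative ``fitting lemma'': whereas in Theorem \ref{main1} the partner factor is confined to the \emph{rigid} translate $t-I_k$ of the current cylinder, here it is confined to $K_k=x/(a_0+I_k)$, an interval of comparable but unequal length whose position and scale shift nonlinearly as $I_k$ is refined; one must re-prove the fitting lemma in a form robust to this distortion. This is harmless when the integer parts are large, since $f\mapsto x/((a_0+f)b_0)$ is then nearly affine on the relevant scale, but for the smallest admissible $x$ the distortion is appreciable, so either $\mathcal F_*$ must be taken thick enough to absorb a definite bounded distortion, or these cases must be run through by hand. The coarse step --- verifying that the integer-part products leave no gap wider than the available slack --- is routine but, as in Hall's and Han\v{c}l--Turek's product theorems, must be carried out with care.
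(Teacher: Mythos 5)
Your route is genuinely different from the paper's. The paper fixes the integer parts \emph{asymmetrically}, taking $a_0=[x]$ and $b_0=0$, and then runs exactly the same inductive two-player construction as in Theorem~\ref{main1} (Construction~\ref{construction2}, Lemma~\ref{claim2}, and the two case analyses) with sums replaced by quotients $x/[\,\cdot\,]$; the contradiction argument again comes down to the monotonicity relations (\ref{pre1})--(\ref{pre4}) forcing an impossible chain of equalities among the even-order digits. You instead propose a \emph{symmetric} split $a_0,b_0\asymp\sqrt{x}$ and want to invoke a Newhouse thickness / Astels-type gap lemma after passing to logarithms. The paper's route buys uniformity (every admissible $x$, including $x\in[1/2,1)$, is treated by one and the same inductive scheme) at the cost of a somewhat painful case analysis; your route, if completed, would be conceptually cleaner for large $x$ and would package the ``fitting'' property once and for all in a thickness constant, but it shoulders several burdens the paper never has to face.

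Those burdens are where the real gaps are. First, the load-bearing claim that a restricted sub-Cantor set $\mathcal F_*\subset\mathcal F$ has Newhouse thickness $>1$ is never verified, and it is not obvious: $\mathcal F$ is obtained by forcing the \emph{odd}-order quotients to $1$, so at each level one keeps only the extremal sub-cylinder $I(\ldots,1)$ inside $I(\ldots)$, which tends to produce a large gap on one side of every bridge. One would need to exhibit an explicit digit block for the even-order quotients, compute bridge/gap ratios, and check that the bounded distortion of $f\mapsto\log(a_0+f)$ does not destroy the estimate --- none of which is routine. Second, the ``coarse covering'' step (that the numbers $(m+\tfrac12)(n+\tfrac12)$ with $m,n\asymp\sqrt x$ leave no gap near $x$ wider than the slack $\asymp\sqrt x$) is flagged as needing care but not carried out; it is a genuine arithmetic lemma, comparable in weight to what it replaces. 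Third, and most concretely, the proposal does not actually cover the theorem's hypothesis: for $x\in[1/2,1)$ --- an entire interval, not ``finitely many small values'' --- you cannot take $a_0,b_0\asymp\sqrt x$ since both would have to vanish, and the representation forces one of $a_0,b_0$ to be $0$. The paper's choice $b_0=0$, $a_0=[x]$ handles this base case automatically; your symmetric setup excludes it, so the case split is not a ``bookkeeping step'' but a separate argument that is missing.
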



Han\v{c}l and Turek \cite{MR4632209} also studied the sets of continued fractions whose even-order partial quotients are bounded or equal to a given positive integer. It is obvious that these sets are of zero Lebesgue measure, and the research on Hausdorff dimension of such exceptional sets receives a lot of attention. Borel--Bernstein theorem gives a rough way of determining the Lebesgue measure of a certain kind of sets of continued fractions, while the fractional dimensional theory provides detailed information of the size and complexity of such sets.

\begin{thm}[Borel--Bernstein]
	Let $\phi$ be an arbitrary positive function defined on $\mathbb{N}$ and
	\begin{equation*}
		E(\phi)\coloneqq\{x\in[0,1)\colon a_n(x)\geq\phi(n)\ \textnormal{i.m.}\ n\}.
	\end{equation*}
	Then $\mathcal{L}(E(\phi))$ is null or full, according as the series $\sum_{n=1}^\infty\frac{1}{\phi(n)}$ converges or diverges, where $\mathcal{L}$ denotes Lebesgue measure.
\end{thm}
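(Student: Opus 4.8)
The plan is to recognise $E(\phi)$ as the $\limsup$ set $\limsup_{n}E_{n}$ with $E_{n}\coloneqq\{x\in[0,1)\colon a_{n}(x)\geq\phi(n)\}$ and to run a Borel--Cantelli argument separately in the two directions. First I would reduce to the case $\phi\colon\mathbb{N}\to\mathbb{N}$: since every $a_{n}(x)$ is a positive integer we have $E(\phi)=E(\lceil\phi\rceil)$, and the series $\sum_{n}1/\phi(n)$ and $\sum_{n}1/\lceil\phi(n)\rceil$ converge or diverge together. If $\phi(n)=1$ for infinitely many $n$ then $E_{n}=[0,1)$ for those $n$, so $E(\phi)=[0,1)$ while the series diverges and the dichotomy holds trivially; hence I may assume $\phi(n)\geq2$ for all large $n$.

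The analytic core is a single estimate on fundamental intervals. Write $I_{n}(b_{1},\dots,b_{n})$ for the rank-$n$ cylinder $\{x\colon a_{1}(x)=b_{1},\dots,a_{n}(x)=b_{n}\}$, so that $|I_{n}(b_{1},\dots,b_{n})|=\bigl(q_{n}(q_{n}+q_{n-1})\bigr)^{-1}$ with $q_{n}=b_{n}q_{n-1}+q_{n-2}$. Summing the lengths of $I_{n}(b_{1},\dots,b_{n-1},k)$ over $k\geq m$ and comparing with $|I_{n-1}(b_{1},\dots,b_{n-1})|$, I would obtain absolute constants $0<c_{1}\leq c_{2}$ such that
\[
  c_{1}\,\frac{|I_{n-1}|}{m}\ \leq\ \mathcal{L}\bigl(\{x\in I_{n-1}\colon a_{n}(x)\geq m\}\bigr)\ \leq\ c_{2}\,\frac{|I_{n-1}|}{m}
\]
for every $n\geq1$, every rank-$(n-1)$ cylinder $I_{n-1}$, and every $m\geq1$ (for $n=1$ one takes $I_{0}=[0,1)$). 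Summing the right-hand inequality over the essentially disjoint rank-$(n-1)$ cylinders gives $\mathcal{L}(E_{n})\leq c_{2}/\phi(n)$; thus if $\sum_{n}1/\phi(n)<\infty$ then $\sum_{n}\mathcal{L}(E_{n})<\infty$, and the convergence half of the Borel--Cantelli lemma yields $\mathcal{L}(E(\phi))=\mathcal{L}(\limsup_{n}E_{n})=0$.

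For the divergence half I would avoid the independence-type second Borel--Cantelli lemma (the $E_{n}$ are not independent) and instead iterate the conditional estimate directly. From the left-hand inequality, $\mathcal{L}(I_{n-1}\cap E_{n}^{c})\leq(1-c_{1}/\phi(n))\,|I_{n-1}|$ for every rank-$(n-1)$ cylinder. For fixed $N$ and $N'>N$, the set $\bigcap_{n=N}^{N'-1}E_{n}^{c}$ is a union of rank-$(N'-1)$ cylinders; decomposing $\bigcap_{n=N}^{N'}E_{n}^{c}$ over these, peeling off the index $N'$, and iterating down to $N$ gives
\[
  \mathcal{L}\Bigl(\bigcap_{n=N}^{N'}E_{n}^{c}\Bigr)\ \leq\ \prod_{n=N+1}^{N'}\Bigl(1-\frac{c_{1}}{\phi(n)}\Bigr).
\]
Since $\sum_{n}1/\phi(n)=\infty$ and $0<c_{1}/\phi(n)\leq c_{1}/2<1$, the product tends to $0$ as $N'\to\infty$, so $\mathcal{L}(\bigcap_{n\geq N}E_{n}^{c})=0$ for every $N$; hence $\mathcal{L}(\liminf_{n}E_{n}^{c})=0$, that is, $\mathcal{L}(E(\phi))=1$.

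I expect the only genuine obstacle to be establishing the two-sided cylinder estimate with constants uniform in $n$ and in $I_{n-1}$; this is precisely where the bounded-distortion property of the Gauss map is used, through the explicit formula $|I_{n}|=(q_{n}(q_{n}+q_{n-1}))^{-1}$ and the recursion $q_{n}=b_{n}q_{n-1}+q_{n-2}$, together with the elementary comparison $\sum_{k\geq m}k^{-2}\asymp m^{-1}$. Everything downstream --- the summation over cylinders, the two applications of Borel--Cantelli, and the telescoping product --- is routine.
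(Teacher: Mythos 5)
Your argument is correct, and it is the classical proof of the Borel--Bernstein theorem. Note, however, that the paper does not actually prove this statement: it is quoted as background, with a citation to Borel, Bernstein, Khintchine, and Hardy--Wright, so there is no in-paper proof to compare against.

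Your route is essentially the textbook one found in those references. The key input is the two-sided cylinder estimate, which follows directly from the telescoping identity
\begin{equation*}
\sum_{k\ge m}\frac{1}{(kq_{n-1}+q_{n-2})\big((k+1)q_{n-1}+q_{n-2}\big)}=\frac{1}{q_{n-1}(mq_{n-1}+q_{n-2})},
\end{equation*}
so that the ratio to $|I_{n-1}|=\big(q_{n-1}(q_{n-1}+q_{n-2})\big)^{-1}$ equals $(q_{n-1}+q_{n-2})/(mq_{n-1}+q_{n-2})\in[\tfrac{1}{m+1},\tfrac{2}{m}]$, giving $c_1=\tfrac12$, $c_2=2$. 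The convergence half is then a direct Borel--Cantelli application, and for the divergence half your conditional iteration $\mathcal{L}\big(\bigcap_{n=N}^{N'}E_n^c\big)\le\prod_{n=N+1}^{N'}\big(1-c_1/\phi(n)\big)$ correctly exploits the cylinder structure to avoid any independence hypothesis; the product tends to $0$ since $\sum 1/\phi(n)=\infty$. The preliminary reductions (replacing $\phi$ by $\lceil\phi\rceil$, disposing of the case $\phi(n)=1$ infinitely often) are also handled correctly. I see no gaps.
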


\noindent The well-known Borel--Bernstein theorem, which is a 0-1 law with respect to the Lebesgue measure, plays an important role in the metrical theory of continued fractions. It was proved originally by Borel \cite{Borel1909LesPD} and Bernstein \cite{MR1511668}. Further details were provided later by Borel \cite{MR1511714}. For proofs, we refer to Khintchine \cite{MR0161834}, Hardy and Wright \cite{MR0568909}.

The first published work on fractal dimension of exceptional sets in continued fractions is known as the paper by Jarn\'ik \cite{Jarnik1928-1929}, in which the author studied the sets of badly approximable numbers with bounded partial quotients. After that, Good \cite{MR0004878} studied the set $\{x\in[0,1)\colon a_n(x)\rightarrow\infty\}$, and provided an estimation of upper bound of the fractal dimension of set $\{x\in[0,1)\colon a_n(x)\geq\phi(n)\ \textnormal{i.m.}\ n\}$, where {\textnormal{i.m.}} stands for infinitely many. After that, many authors focus on the growth rate of partial quotients from various aspects, and due to the flourishing development of dynamical system, many new tools are brought to the study of the sets of continued fractions (see \cite{MR1102677,MR1449135,MR3099298}).

One classical method to study the fractal dimension of the sets of continued fractions is to restrict the growing rates of partial quotients. \L uczak \cite{MR1464375} and Feng, Wu, Liang and Tseng \cite{MR1464376} proved that the Hausdorff dimension of the set
\begin{equation*}
	\{x\in I\colon a_{n}(x)\geq c^{b^{n}}\ \textnormal{i.m.}\ n\} \quad(b,c>1)
\end{equation*}
is $1/(1+b)$. Shang and Fang \cite{shangfang} recently gave a new simple proof for the upper bound of the Hausdorff dimension of the above set. Wang and Wu \cite{MR2419924} completely determined the Hausdorff dimension of the set
\begin{equation*}
	\{x\in I\colon a_{n}(x)\geq B^{n}\ \textnormal{i.m.}\ n\} \quad(B>1),
\end{equation*}
and they obtained the Hausdorff dimension of exceptional sets of Borel-Bernstein theorem:
\begin{equation*}
	\{x\in[0,1)\colon a_n(x)\geq\phi(n)\ \textnormal{i.m.}\ n\},
\end{equation*}
where $\phi$ is an arbitrary positive function on $\mathbb{N}$, such that $\sum_{n\geq1}\frac{1}{\phi(n)}<\infty$.
Bugeaud, Robert and Hussain \cite{bugeaud2023metrical} gave the complex analogue of the result of Wang and Wu. They determined the Hausdorff dimension of the set
\begin{equation*}
	\{z\in\mathbb{C}\colon \abs{a_n(z)}\geq\phi(n)\ \textnormal{i.m.}\ n\},
\end{equation*}
where $a_n(z)$ denotes the $n$th partial quotient in the Hurwitz continued fraction of a complex number $z$. 

In the second half part of the present paper, we determine the Hausdorff dimensions of sets of continued fractions whose odd-order partial quotients are equal to 1 and even-order partial quotients are growing at an exponential rate, a super-exponential rate, and in general a positive function rate. 

\subsection{Statement of results}

We define the set
\begin{equation*}
	I_{even}\coloneqq\{x=[0;a_1,a_2,a_3\ldots]\in I:a_{2k+1}=1,\ \text{for all}\ k\in \mathbb{N}\}.
\end{equation*}
To simplify our notation, we denote $q_{2n}(1,a_2,1,a_4,\ldots,1,a_{2n})=q_{2n}$. For the other notations, we use a similar way to simplify and assume the elements of odd-order are equal to 1 by default. For any $B>1$, we write 
\begin{equation*}
	s_B\coloneqq \lim\limits_{n\rightarrow\infty}\inf\left\{\rho\geq0\colon \sum_{a_2,a_4,\ldots,a_{2n}\in\mathbb{N}}\frac{1}{(B^{2n}q_{2n}^2)^\rho}\leq1\right\}.
\end{equation*}

In the following sections, we restrict ourselves to the case that the odd-order partial quotients are equal to 1 and determine the Hausdorff dimension of the sets of continued fractions whose even-order partial quotients are growing at an exponential rate:
\begin{equation*}
	F(B)=\big\{x\in I_{even}\colon a_{2n}(x)\geq B^{2n}\ \textnormal{i.m.}\ n\big\}	\quad(B>1),
\end{equation*}
the sets of continued fractions whose even-order partial quotients are growing at a super-exponential rate
\begin{equation*}
	E(b,c)\coloneqq\big\{x\in I_{even}\colon a_{2n}(x)\geq c^{b^{2n}}\ \textnormal{i.m.}\ n\big\}\quad (b,c>1),
\end{equation*}
and
\begin{equation*}
	\widetilde{E}(b,c)\coloneqq\big\{x\in I_{even}\colon a_{2n}(x)\geq c^{b^{2n}}\ \textnormal{for all}\ n\geq1\big\}\quad (b,c>1),
\end{equation*}
and the set of continued fractions whose even-order partial quotients are growing at a general positive function rate:
\begin{equation*}
	F(\phi)\coloneqq\big\{x\in I_{even}\colon a_{2n}(x)\geq \phi(n)\ \ \textnormal{i.m.}\ n\big\},
\end{equation*}
where $\phi$ is an arbitrary positive function on $\mathbb{N}$, such that $\sum_{n\geq1}\frac{1}{\phi(n)}<\infty$.

We use $\dim_H$ to denote the Hausdorff dimension.

\begin{thm}\label{main3}
	For any $B>1$, we have $\dim_H F(B)=s_B$.
\end{thm}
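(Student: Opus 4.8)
The plan is to establish the two bounds $\dim_H F(B)\le s_B$ and $\dim_H F(B)\ge s_B$ separately, mimicking the classical strategy of Wang--Wu \cite{MR2419924} for $\{a_n\ge B^n\ \textnormal{i.m.}\}$ but adapted to the restricted alphabet where odd-order partial quotients are frozen at $1$. The central geometric input is the size of a basic cylinder: on $I_{even}$, a cylinder $J_{2n}=J(1,a_2,1,a_4,\ldots,1,a_{2n})$ consisting of those $x$ with prescribed even-order quotients $a_2,\ldots,a_{2n}$ (and the usual odd ones equal to $1$) has length comparable to $1/q_{2n}^2$, where $q_{2n}=q_{2n}(1,a_2,\ldots,1,a_{2n})$, by the standard estimate $\abs{J_{k}}\asymp 1/q_k^2$. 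The extra constraint $a_{2n}\ge B^{2n}$ forces the corresponding sub-cylinder to have length $\asymp 1/(a_{2n}^2 q_{2n-2}^2)$; summing the $\rho$-th powers of these lengths over all admissible $a_2,\ldots,a_{2n}$ with $a_{2n}\ge B^{2n}$ produces essentially the quantity $\sum 1/(B^{2n}q_{2n}^2)^\rho$ appearing in the definition of $s_B$ (the tail $\sum_{a_{2n}\ge B^{2n}}a_{2n}^{-2\rho}\asymp B^{2n(1-2\rho)}$ contributes the $B^{2n}$ factor up to constants when $\rho$ is near $s_B$).

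For the upper bound, I would fix $\rho>s_B$, and for each $N$ cover $F(B)$ by $\bigcup_{n\ge N}\bigcup\{J(1,a_2,\ldots,1,a_{2n}): a_{2n}\ge B^{2n}\}$, which is a legitimate cover since $x\in F(B)$ lies in infinitely many such cylinders. The $\rho$-dimensional sum of this cover is bounded, up to a constant, by $\sum_{n\ge N}\big(\sum_{a_2,\ldots,a_{2n}}(B^{2n}q_{2n}^2)^{-\rho}\big)$, and by the definition of $s_B$ each inner sum is $\le 1$ for $n$ large, so I need the series $\sum_{n\ge N}$ of these to go to $0$ — this requires a little care: one shows the inner sums decay geometrically in $n$ for $\rho>s_B$, using the sub-multiplicative structure $q_{2n}\ge a_{2n}q_{2n-2}$ and the fact that for $\rho>s_B$ one can split off a uniformly-less-than-one factor at each stage. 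Letting $N\to\infty$ gives $\mathcal H^\rho(F(B))=0$, hence $\dim_H F(B)\le\rho$, and then $\rho\downarrow s_B$.

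For the lower bound I would build a Cantor-type subset of $F(B)$ and apply the mass distribution principle (Falconer / Billingsley lemma). Fix $\rho<s_B$ and a rapidly increasing sequence $n_1<n_2<\cdots$; at level $j$ keep all cylinders $J(1,a_2,\ldots,1,a_{2n_j})$ with $a_{2n_j}\in[B^{2n_j}, 2B^{2n_j})$ (a one-sided window, so that the forced partial quotient is of the right order but cylinder lengths at consecutive levels are comparably controlled), and between levels let the even quotients range over a large but finite block $\{1,\ldots,M\}$ so that the relevant sums stay close to $1$. Distribute mass according to the natural measure that splits evenly (in the $\rho$-sum sense) among children, and estimate the local dimension $\liminf \log\mu(J)/\log\abs{J}$ from below by $\rho$ by comparing $\mu$ of a cylinder with its length via the recursive construction; the one-sided window $B^{2n_j}\le a_{2n_j}<2B^{2n_j}$ is exactly what makes $\abs{J_{2n_j}}\asymp B^{-4n_j}q_{2n_j-2}^{-2}$ with two-sided bounds, which is needed to control the measure of arbitrary balls, not just cylinders. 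Sending $n_j\to\infty$ fast enough kills the error terms and yields $\dim_H F(B)\ge\rho$, then $\rho\uparrow s_B$.

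The main obstacle I anticipate is the lower bound's ball-counting step: passing from estimates on cylinders to estimates on arbitrary balls $B(x,r)$ requires understanding how a ball of radius $r$ meets the Cantor set, and because the forced large partial quotient $a_{2n_j}\sim B^{2n_j}$ creates cylinders of wildly different lengths at a single level (the gap between consecutive cylinders $J(\ldots,1,a_{2n_j})$ and $J(\ldots,1,a_{2n_j}+1)$ is itself of order $1/(a_{2n_j}^2 q_{2n_j-2}^2)$), one must carefully bound how many level-$j$ cylinders a ball can intersect — this is where the Wang--Wu argument does the real work and where the restricted alphabet $a_{2k+1}=1$ forces me to redo the distortion estimates rather than quote them. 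A secondary technical point is verifying that $s_B$ is the right exponent on \emph{both} sides, i.e.\ that the $\liminf$-with-$\inf$ definition of $s_B$ interacts correctly with the geometric decay of the pressure-type sums; I expect this follows from sub-multiplicativity of $n\mapsto \sum_{a_2,\ldots,a_{2n}}(q_{2n}^2)^{-\rho}$ together with elementary manipulation of the $B^{2n}$ weights.
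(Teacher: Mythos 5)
Your lower-bound outline is sound and essentially reproduces the paper's strategy: freeze a sparse sequence $n_1<n_2<\cdots$ of even positions, force $a_{2n_k}$ into a one-sided window of width $\asymp B^{2n_k}$, restrict the remaining even partial quotients to a finite alphabet $\{1,\ldots,\alpha\}$, spread a measure using the pre-limit exponents $s_{m_j,B}(\alpha)$, prove gap estimates and a Hölder-type bound $\mu(J)\lesssim|J|^{t-\varepsilon}$, apply the mass distribution principle, and finally let $\alpha\to\infty$ using $\lim_{\alpha\to\infty}s_B(\alpha)=s_B$. The ball-counting difficulty you flag is genuine and is exactly what the paper's Propositions \ref{gap}, \ref{estmu} and \ref{lower} resolve by a case analysis on the position of $n$ relative to $n_k$.

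The upper bound, as written, contains a real error. You cover $F(B)$ by the \emph{individual} depth-$2n$ cylinders $I(1,a_2,\ldots,1,a_{2n})$ with $a_{2n}\ge B^{2n}$ and claim that
\begin{equation*}
\sum_{a_2,\ldots,a_{2n-2}\ge 1}\ \sum_{a_{2n}\ge B^{2n}}\abs{I(1,a_2,\ldots,1,a_{2n})}^{\rho}
\;\asymp\;\sum_{a_2,\ldots,a_{2n}\ge 1}\frac{1}{(B^{2n}q_{2n}^2)^{\rho}}.
\end{equation*}
This is false. Using $\abs{I}\asymp q_{2n}^{-2}\asymp(a_{2n}q_{2n-2})^{-2}$, the left-hand side is
\begin{equation*}
\asymp\ B^{2n(1-2\rho)}\sum_{a_2,\ldots,a_{2n-2}}q_{2n-2}^{-2\rho},
\end{equation*}
whereas (splitting off the $a_{2n}$-sum) the right-hand side is
\begin{equation*}
\asymp\ B^{-2n\rho}\sum_{a_2,\ldots,a_{2n-2}}q_{2n-2}^{-2\rho}.
\end{equation*}
The ratio of these is $\asymp B^{2n(1-\rho)}\to\infty$ since $\rho<1$, so your cover has a $\rho$-power sum that is \emph{larger} than the defining series for $s_B$ by an unbounded factor, and the resulting critical exponent would be strictly larger than $s_B$, failing to match the lower bound. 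The point you are missing is that summing the $\rho$-th powers of the individual lengths of the cylinders with $a_{2n}\ge B^{2n}$ is far more expensive than taking the $\rho$-th power of the length of their union. The paper therefore covers by the \emph{single} connected interval
\begin{equation*}
J(1,a_2,\ldots,1,a_{2n})=\bigl\{x\in I_{even}\colon a_{2k}(x)=a_{2k},\ 1\le k\le n,\ a_{2(n+1)}(x)\ge B^{2(n+1)}\bigr\},
\end{equation*}
which by (\ref{pre5}) has $\abs{J}\le 1/(B^{2n}q_{2n}^2)$; only with this cover does the $\rho$-power sum reproduce the defining series of $s_{n,B}$. After that, the extra $\varepsilon/2$ in the choice $\rho=s_B+\varepsilon\ge s_{n,B}+\varepsilon/2$ together with the trivial bound $q_{2n}>2^{n-1}$ yields the geometric decay needed to conclude $\mathcal{H}^{s_B+\varepsilon}(F(B))=0$.
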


It should be mentioned that $s_B$ is decreasing to $1/2$ as $B$ tends to $\infty$.

\begin{thm}\label{main4}
	For any $b,c>1$, we have
	\begin{equation*}
		\dim_H E(b,c)=\dim_H \widetilde{E}(b,c)=\dfrac{1}{1+b^2}.
	\end{equation*}
\end{thm}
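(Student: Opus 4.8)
The plan is to prove the two equalities $\dim_H E(b,c)=\dim_H\widetilde E(b,c)=1/(1+b^2)$ by sandwiching both sets between objects whose dimension is already understood, exploiting the fact that on $I_{even}$ the odd partial quotients are frozen at $1$, so the only free data are $a_2,a_4,a_6,\dots$. First I would record the basic growth estimate for the denominators $q_{2n}$ on $I_{even}$: since $q_{2n}=a_{2n}q_{2n-1}+q_{2n-2}$ and $q_{2n-1}=q_{2n-2}+q_{2n-3}$, one gets two-sided bounds of the shape $a_{2n}q_{2n-2}\ll q_{2n}\ll a_{2n}\,2^n\prod_{j<n}a_{2j}$, and in particular a cylinder of level $2n$ in $I_{even}$ determined by $(a_2,\dots,a_{2n})$ has length comparable to $1/q_{2n}^2$ up to a factor that is subexponential in $n$ relative to the doubly-exponential size of the $a_{2j}$'s. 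This is the analogue of the classical estimate $|I_n(a_1,\dots,a_n)|\asymp q_n^{-2}$ and it is the tool that lets covering/mass arguments go through with the same exponents as in the unrestricted case.

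For the upper bound on $\dim_H E(b,c)$, I would fix $s>1/(1+b^2)$ and cover $E(b,c)$ by, for each large $n$, the union over $(a_2,\dots,a_{2n-2})\in\mathbb N^{n-1}$ and $a_{2n}\ge c^{b^{2n}}$ of the level-$2n$ cylinders; the $s$-dimensional sum over the last index $a_{2n}$ contributes $\sum_{a_{2n}\ge c^{b^{2n}}} (a_{2n}q_{2n-1}^2\cdot 2)^{-2s}\ll (c^{b^{2n}})^{1-2s} q_{2n-1}^{-2s}$, and then the sum over the remaining indices is a convergent geometric-type series precisely because $2s>1$ while the extra factor $c^{(1-2s)b^{2n}}$ decays doubly-exponentially and kills the $\prod a_{2j}$ growth; since $n$ is arbitrary this gives $\mathcal H^s(E(b,c))=0$. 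The same computation applies verbatim to $\widetilde E(b,c)$ because $\widetilde E(b,c)\subseteq E(b,c)$, so $\dim_H\widetilde E(b,c)\le\dim_H E(b,c)\le 1/(1+b^2)$.

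For the lower bound it suffices to treat the smaller set $\widetilde E(b,c)$. Here I would build a Cantor-type subset: restrict each free partial quotient to a dyadic window, taking $a_{2n}\in[c^{b^{2n}},2c^{b^{2n}})$ — forced to grow at exactly the prescribed rate — which gives roughly $c^{b^{2n}}$ admissible choices at level $n$, and then construct the natural probability measure $\mu$ distributing mass uniformly over these choices. The key step is the mass-distribution (Billingsley/Frostman) estimate: one must show that for $\mu$-a.e. $x$, $\liminf_{r\to0}\log\mu(B(x,r))/\log r\ge 1/(1+b^2)$, which reduces to comparing $\mu$ of a level-$2n$ cylinder — a product of reciprocals of the window sizes, $\asymp\prod_{j\le n} c^{-b^{2j}}\asymp c^{-b^{2n+2}/(b^2-1)}$ — against the cylinder length $\asymp q_{2n}^{-2}\asymp (\prod_{j\le n}c^{b^{2j}})^{-2}$ up to subexponential corrections, and against the lengths of the intermediate (non-full) cylinders obtained by fixing $a_2,\dots,a_{2n}$ and letting $a_{2n+2}$ range over its window. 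Taking logarithms, the dominant balance is $b^{2n+2}/(b^2-1)$ against $2b^{2n+2}/(b^2-1)$ at the full levels and against $b^{2n+2}+b^{2n+2}/(b^2-1)$ at the worst intermediate level, and the minimum of the ratios tends to $1/(1+b^2)$; this is exactly the computation appearing in \L uczak \cite{MR1464375} and Feng--Wu--Liang--Tseng \cite{MR1464376} adapted to the even-indexed sublattice, and it is the main obstacle because one has to handle both the cylinder scales and the "gap" scales between consecutive admissible cylinders carefully, tracking that the extra factor $2^n$ and the products $\prod a_{2j}$ are subexponential compared to the doubly-exponential terms and hence do not affect the limiting exponent. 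Combining the two bounds yields $1/(1+b^2)\le\dim_H\widetilde E(b,c)\le\dim_H E(b,c)\le 1/(1+b^2)$, which forces all the inequalities to be equalities and completes the proof.
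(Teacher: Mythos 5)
Your lower bound follows essentially the same route as the paper: a Cantor-type subset of $\widetilde{E}(b,c)$ obtained by pinning each $a_{2n}$ to a window around $c^{b^{2n}}$, together with a mass-distribution estimate. The paper packages this through Falconer's Example~4.6 rather than an explicit Frostman measure, but the interval counts $m_k$ and gap sizes $\varepsilon_k$ are exactly the same data, so this part matches. Your arithmetic at the ``intermediate'' scale has a slip: with $q_{2n}\asymp c^{\,b^{2n+2}/(b^2-1)}$, the union over the $a_{2n+2}$-window has length with $-\log$ of order $b^{2n+2}+2b^{2n+2}/(b^2-1)$, not $b^{2n+2}+b^{2n+2}/(b^2-1)$ as you wrote (the latter would give ratio $1/b^2$, not $1/(1+b^2)$); restoring the factor~$2$ makes the two candidate ratios $1/2$ and $1/(1+b^2)$, whose minimum is the asserted bound. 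That is a bookkeeping error, not a conceptual one.

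The upper bound, however, has a genuine gap. You cover $E(b,c)$ by the union over $n\ge N$ of the level-$2n$ cylinders $I(1,a_2,\dots,1,a_{2n})$ with $(a_2,\dots,a_{2n-2})\in\mathbb{N}^{n-1}$ unrestricted and $a_{2n}\ge c^{b^{2n}}$, and you rely explicitly on $2s>1$ twice: once for the tail $\sum_{a_{2n}\ge c^{b^{2n}}}a_{2n}^{-2s}\asymp(c^{b^{2n}})^{1-2s}$, and once to make the unrestricted sum $\sum_{a_2,\dots,a_{2n-2}}q_{2n-1}^{-2s}$ a convergent geometric series (it is bounded below by a constant times $\prod_{j<n}\sum_{a\in\mathbb{N}}a^{-2s}$). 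Both sums genuinely diverge once $s\le 1/2$. So the cylinder cover only yields $\dim_H E(b,c)\le 1/2$, and since $1/(1+b^2)<1/2$ for every $b>1$, the theorem is left unproved. The doubly-exponential damping $c^{(1-2s)b^{2n}}$ cannot rescue this: the divergence happens inside the sum at each fixed $n$, before any comparison of scales is made.

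To push the dimension estimate below $1/2$ one needs a cover that is drastically more economical than cylinders, and that is the real content of the paper's Section~\ref{6}. The paper first proves (Lemma~\ref{qd2}) that for $x\in E(b,c)$ and any $1<d<b$ one has $q_{2n+2}(x)>\max\{q_{2n}(x)^{d^2},c^{d^{2n+2}}\}$ infinitely often, and then covers $E(b,c)$ by balls $I_{n,x}=B\big(p_{2n+1}(x)/q_{2n+1}(x),\,q_{2n}(x)^{-(1+d^2)}\big)$ (together with a secondary family $J_{n,x}$ handling the case $q_{2n}<c^{d^{2n}/3}$), whose radii shrink like $q_{2n}^{-(1+d^2)}$ rather than $q_{2n}^{-2}$, and whose multiplicity is controlled by \L uczak's lemma: the number of tuples $a_2,\dots,a_{2n}$ with $\prod_j a_{2j}\le m$ is at most $m(2+\log m)^{k-1}$. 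The convergence condition then becomes $(1+d^2)s>1$, i.e.\ $s>1/(1+d^2)$, and letting $d\uparrow b$ gives the sharp bound. Some device of this kind --- using convergents as centers, inflating the exponent on the cover radius via the super-exponential jump in $q_{2n}$, and a subexponential multiplicity count --- is unavoidable; a straight cylinder cover cannot see past the exponent $1/2$.
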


As a generalization of both results above, we have the following theorem.

\begin{thm}\label{main5}
	Let $\phi$ be an arbitrary positive function defined on $\mathbb{N}$. Suppose $\liminf\limits_{n\rightarrow\infty}\frac{\log\phi(n)}{2n}=\log B$. If $B=\infty$, suppose $\liminf\limits_{n\rightarrow\infty}\frac{\log\log\phi(n)}{2n}=\log b$.
	\begin{itemize}
		\item If $B=1$, then $s_1\leq\dim_H F(\phi)\leq\dim_H I_{even}$.
		\item If $1<B<\infty$, then $\dim_H F(\phi)=s_B$.
		\item If $B=\infty$ and $b=1$, then $\dim_H F(\phi)=\frac{1}{2}$.
		\item If $B=\infty$ and $1<b<\infty$, then $\dim_H F(\phi)=\frac{1}{1+b^2}$.
		\item If $B=\infty$ and $b=\infty$, then $\dim_H F(\phi)=0$.
	\end{itemize}
\end{thm}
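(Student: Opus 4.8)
The plan is to reduce each of the five regimes to Theorems \ref{main3} and \ref{main4} (together with the trivial containment $F(\phi)\subseteq I_{even}$) by a two-sided squeezing argument built on the $\liminf$ hypotheses, followed by a passage to the limit. The mechanism is that $F(\phi)$, $F(B)$, $E(b,c)$ are all $\limsup$ sets of the form $\limsup_{n}\{x\in I_{even}\colon a_{2n}(x)\ge(\cdot)\}$, so an inequality between the defining thresholds valid for all large $n$ yields a set inclusion, while a reverse inequality holding only along a subsequence yields the opposite inclusion once the i.m.\ condition is restricted to that subsequence.

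For the upper bound, fix $\epsilon>0$. If $1<B<\infty$, then $\liminf_{n}\frac{\log\phi(n)}{2n}=\log B$ forces $\phi(n)\ge(Be^{-\epsilon})^{2n}$ for all large $n$, hence $F(\phi)\subseteq F(Be^{-\epsilon})$ and Theorem \ref{main3} gives $\dim_H F(\phi)\le s_{Be^{-\epsilon}}$; letting $\epsilon\to 0$ and using continuity of $B\mapsto s_B$ gives $\dim_H F(\phi)\le s_B$. If $B=\infty$ and $1<b<\infty$, then $\phi(n)\ge e^{(be^{-\epsilon})^{2n}}$ for all large $n$, so $F(\phi)\subseteq E(be^{-\epsilon},e)$ and Theorem \ref{main4} gives $\dim_H F(\phi)\le\frac{1}{1+(be^{-\epsilon})^{2}}\to\frac{1}{1+b^{2}}$. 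The remaining cases come out the same way by degeneration: for $B=1$ use $F(\phi)\subseteq I_{even}$; for $B=\infty$, $b=1$ use $\phi(n)\ge e^{2nK}$ eventually for every $K$, so $\dim_H F(\phi)\le s_{e^{K}}\to\frac12$; for $B=\infty$, $b=\infty$ use $\phi(n)\ge e^{(e^{K})^{2n}}$ eventually for every $K$, so $\dim_H F(\phi)\le\frac{1}{1+e^{2K}}\to 0$.

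For the lower bound, choose a subsequence $\{n_k\}$ realizing the $\liminf$. If $1<B<\infty$, then $\phi(n_k)^{1/(2n_k)}\to B$, so $\phi(n_k)\le(Be^{\epsilon})^{2n_k}$ for all large $k$, and therefore
\[
 F(\phi)\ \supseteq\ \bigl\{x\in I_{even}\colon a_{2n_k}(x)\ge(Be^{\epsilon})^{2n_k}\ \textnormal{i.m.}\ k\bigr\}.
\]
The key input is then a subsequence version of the lower bound half of Theorem \ref{main3}: for every infinite $\{n_k\}\subseteq\mathbb{N}$ and every $B'>1$, the set on the right has Hausdorff dimension $s_{B'}$; likewise a subsequence version of Theorem \ref{main4}. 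Granting these, $\dim_H F(\phi)\ge s_{Be^{\epsilon}}\to s_B$ (resp.\ $\ge\frac{1}{1+(be^{\epsilon})^{2}}\to\frac{1}{1+b^{2}}$), and the degenerate cases follow by the same limiting procedure: $B=1$ reduces to $s_{e^{\epsilon}}\to s_1$ via an exponential-rate subsequence set, $b=1$ reduces to $\frac{1}{1+e^{2\epsilon}}\to\frac12$ via a super-exponential subsequence set, and $b=\infty$ is trivial since $\dim_H\ge 0$.

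I expect the main obstacle to be the lower bound for these subsequence versions. The lower bounds in Theorems \ref{main3} and \ref{main4} are obtained by placing, inside the $\limsup$ set, a Cantor subset supporting a measure whose local dimension---computed via the mass-distribution principle, and accounting for the fact that near a level where the partial quotient is forced large a small ball swallows an entire block of cylinders---equals the target exponent; the delicate point is that this requires a carefully tuned, non-uniform weighting of the conditional distributions rather than merely truncating the even-order partial quotients. For Theorem \ref{main5} one must run this construction with the ``large partial quotient'' levels placed along the subsequence $\{n_k\}$ supplied by $\phi$, which may be arbitrarily sparse; to keep the answer equal to $s_{B'}$ (and not some $M$-dependent, smaller value) one has to take the gaps $n_{k+1}-n_k$ long and let the weights on the partial quotients inside the gaps approach the $s_1$-conformal behaviour, so that the transient effect of each forced block is averaged out. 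Carrying this out, together with the continuity and monotonicity of $B\mapsto s_B$ used repeatedly above, is where the bulk of the work lies.
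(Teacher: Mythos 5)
Your outline matches the paper's strategy exactly: use the $\liminf$ hypothesis to obtain eventual lower bounds on $\phi$ for the upper dimension bound (via inclusions $F(\phi)\subseteq F(B')$ or $F(\phi)\subseteq E(b',c)$), and a subsequence on which $\phi$ is small for the lower dimension bound, then pass to the limit using continuity of $B\mapsto s_B$ (Proposition~\ref{lem4}). The one place your write-up falls short is exactly where you expect trouble: you leave the ``subsequence versions'' of the lower bounds in Theorems~\ref{main3} and~\ref{main4} as an unproved hypothesis (``Granting these\ldots''), and your speculative paragraph about them contains two misconceptions.

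First, the subsequence version of the lower bound in Theorem~\ref{main4} needs no new construction at all: $\widetilde{E}(b,c)$, which imposes $a_{2n}(x)\ge c^{b^{2n}}$ for \emph{all} $n$, is trivially contained in $\{x\in I_{even}\colon a_{2n}(x)\ge c^{b^{2n}}\ \textnormal{i.m.}\ n\in\mathcal{L}\}$ for any infinite $\mathcal{L}\subset\mathbb{N}$, so the lower bound $1/(1+b^2)$ transfers for free and sparsity of $\mathcal{L}$ is a non-issue. Second, for Theorem~\ref{main3} the subsequence version is precisely Remark~\ref{rem2}, whose justification is that the Cantor set $F_\alpha(B)$ in Section~\ref{5} already takes the distinguished indices $\{n_k\}$ as an essentially free choice, and the proof of Proposition~\ref{estmu} in fact \emph{requires} the gaps $n_{k+1}-n_k$ to be large---so confining $\{n_k\}$ to $\mathcal{L}$ is not an obstruction but what the construction already wants. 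Your suggestion that the gap-weights must ``approach the $s_1$-conformal behaviour'' is off: the measure $\mu$ uses weights governed by $s_{m_k,B}(\alpha)$, which converges to $s_B(\alpha)$, not $s_1$, and this is what makes the local exponent come out to $s_B(\alpha)$ uniformly over gap lengths. Once you observe Remark~\ref{rem2} and the $\widetilde{E}(b,c)$ containment, the rest of your argument coincides with the paper's proof and is complete.
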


Our approach of computing the Hausdorff dimensions of $E(b,c)$ and $\widetilde{E}(b,c)$ involves ideas from \L uczak \cite{MR1464375}, Feng, Wu, Liang and Tseng \cite{MR1464376} and relies on Falconer \cite[Example 4.6]{MR1102677}. The method of determining the Hausdorff dimensions of $F(B)$ and $F(\phi)$ is motivated by Wang and Wu \cite{MR2419924} where we make full use of the mass distribution principle.

\subsubsection*{\textbf{\emph{Structure of the paper.}}}

The present paper is organized as follows. In Section \ref{2}, we introduce some notations, define necessary concepts, present some basic properties of continued fractions, and construct a useful auxiliary function. In Sections \ref{3} and \ref{4}, we separately prove that the sum and the product of $I_{even}$ contain intervals. Section \ref{5} is devoted to the determination of the Hausdorff dimension of $F(B)$. In Section \ref{6}, we compute the Hausdorff dimension of $E(b,c)$ and $\widetilde{E}(b,c)$. In Section \ref{7}, we fully describe the Hausdorff dimension of $F(\phi)$.

\subsubsection*{\textbf{\emph{Acknowledgment.}}}

The author wishes to express sincere appreciation to Professor Lingmin Liao who critically read the paper and made numerous helpful suggestions.

\section{Preliminaries}\label{2}

\subsection{Basic notations and concepts}

We will use the following notations.

\begin{itemize}
	\item[] $[x]$\quad the integer part of a real number $x$.
	\item[] $I$\quad the interval $[0,1)$.
	\item[] $\abs{\cdot}$\quad the Lebesgue measure of a set.
	\item[] $\mathcal{H}^t$\quad $t$-dimensional Hausdorff measure.
	\item[] $\#$\quad the cardinality of a set.
	\item[] $\textnormal{cl}$\quad the topological closure of a set.
\end{itemize}


Let $F$ be a subset of $\mathbb{R}^n$. For $\delta>0$, we call a countable (or finite) sequence of sets $\{U_i\}$ a \textit{$\delta$-cover} of set $F$, if $0<\abs{U_i}\leq\delta$ and $F\subset\cup_iU_i$. Let $t\geq0$, we define
\begin{equation*}
	\mathcal{H}^t_\delta(F)\coloneqq\inf\Big\{\sum_{i=1}^{\infty}\abs{U_i}^t\colon\{U_i\}\ \text{is a $\delta$-cover of $F$}\Big\}.
\end{equation*}
Notice that $\mathcal{H}^t_\delta(F)$ is a decreasing function with respect to $\delta$. We define the \textit{$t$-dimensional Hausdorff measure} of set $F$ by
\begin{equation*}
	\mathcal{H}^t(F)=\lim_{\delta\rightarrow0}\mathcal{H}^t_{\delta}(F)=\sup_{\delta>0}\mathcal{H}^t_{\delta}(F).
\end{equation*}
Notice that for any set $F$, there exists a critical point of $t$ at which $\mathcal{H}^t(F)$ jumps from $\infty$ to $0$. The critical point 
\begin{equation*}
	\dim_H(F)=\inf\{t>0\colon\mathcal{H}^t(F)=0\}=\sup\{t>0\colon\mathcal{H}^t(F)=\infty\}
\end{equation*}
is called the \textit{Hausdorff dimension} of $F$.

We also present the mass distribution principle (see \cite{MR1102677}), which will be used in Section \ref{3} to determined the lower bound of Hausdorff dimension of $F(B)$.

\begin{lem}[mass distribution principle]\label{dp}
	Let $E\subset \mathbb{R}^n$ be a Borel set, $\mu$ a measure with $\mu(E)>0$. If for any $x\in E$,
	\begin{equation*}
		\liminf_{r\rightarrow 0}\frac{\log{\mu(B(x,r))}}{\log{r}}\geq s,
	\end{equation*} 
	where $B(x,r)$ denotes the open ball of radius $r$ centered at $x$, then we have $\dim_H E\geq s$.
\end{lem}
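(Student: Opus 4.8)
The plan is to prove, for every exponent $t<s$, that $\mathcal{H}^t(E)>0$; since $t<s$ is arbitrary this forces $\dim_H E\ge s$ by the very definition of Hausdorff dimension recalled above. The starting point is a reformulation of the hypothesis: because $\log r\to-\infty$ as $r\to 0^+$, the inequality $\liminf_{r\to 0}\frac{\log\mu(B(x,r))}{\log r}\ge s$ says precisely that for each $x\in E$ and each $t<s$ there is a radius $\rho(x)>0$ with $\mu(B(x,r))\le r^{t}$ for all $0<r\le\rho(x)$ (dividing by the negative quantity $\log r$ reverses the inequality).

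Next I would localize this pointwise bound to a set of positive mass on which it holds with a single uniform radius. Fix $t<s$ and set $E_n\coloneqq\{x\in E\colon\mu(B(x,r))\le r^{t}\ \text{for all}\ 0<r\le 1/n\}$. Each $E_n$ is a Borel set: the map $r\mapsto\mu(B(x,r))$ is non-decreasing and left-continuous and $x\mapsto\mu(B(x,r))$ is lower semicontinuous, so the defining condition need only be tested over rational radii, exhibiting $E_n=E\cap\bigcap_{q\in\mathbb{Q},\,0<q<1/n}\{x\colon\mu(B(x,q))\le q^{t}\}$ as a Borel set. By the previous paragraph the $E_n$ increase to $E$, so continuity of the measure $\mu$ from below gives $\mu(E_n)\to\mu(E)>0$; hence we may fix $n_0$ with $m\coloneqq\mu(E_{n_0})>0$ and put $F\coloneqq E_{n_0}$.

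Finally I would run the standard covering estimate against $F$. Let $\{U_i\}$ be any $\delta$-cover of $F$ with $\delta\le 1/(2n_0)$, and discard those $U_i$ that miss $F$; the rest still cover $F$. For each remaining $U_i$ choose $x_i\in U_i\cap F$; then $U_i\subset B(x_i,2\abs{U_i})$ and $2\abs{U_i}\le 2\delta\le 1/n_0$, so by the defining property of $F$ we get $\mu(U_i)\le\mu(B(x_i,2\abs{U_i}))\le(2\abs{U_i})^{t}=2^{t}\abs{U_i}^{t}$. Summing and using countable subadditivity, $m=\mu(F)\le\sum_i\mu(U_i)\le 2^{t}\sum_i\abs{U_i}^{t}$, whence $\sum_i\abs{U_i}^{t}\ge 2^{-t}m$. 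Taking the infimum over all such $\delta$-covers gives $\mathcal{H}^t_\delta(F)\ge 2^{-t}m$ for every sufficiently small $\delta$, and letting $\delta\to 0$ yields $\mathcal{H}^t(F)\ge 2^{-t}m>0$. Therefore $\dim_H E\ge\dim_H F\ge t$, and letting $t\uparrow s$ finishes the argument. The only step that is not entirely routine — and hence the main obstacle — is the middle one: checking that the localizing sets $E_n$ are $\mu$-measurable and increase to $E$, so that a genuine uniform radius $1/n_0$ can be extracted from the purely pointwise hypothesis; the covering estimate and the final limit are elementary.
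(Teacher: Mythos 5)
Your proof is correct. The paper does not actually prove this lemma — it is quoted from Falconer \cite{MR1102677} — so there is nothing to compare against; your argument is the standard one for passing from the pointwise $\liminf$ hypothesis to a lower bound on $\dim_H$: extract, via the Borel sets $E_n$ and continuity from below, a subset $F$ of positive mass on which $\mu(B(x,r))\leq r^t$ holds with a uniform radius, and then run the covering estimate $\mu(U_i)\leq\mu(B(x_i,2\abs{U_i}))\leq 2^t\abs{U_i}^t$ to get $\mathcal{H}^t(F)\geq 2^{-t}\mu(F)>0$. All the steps check out (the measurability of $E_n$ via lower semicontinuity of $x\mapsto\mu(B(x,q))$ and left-continuity in $r$ is handled correctly), and letting $t\uparrow s$ completes the proof.
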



Let $\frac{p_n}{q_n}=[a_0;a_1,a_2,\ldots,a_n]$ be the $n$th convergent of the real number $x=[a_0;a_1,a_2,a_3,\ldots]$, where $p_k,q_k,0\leq k\leq n$, are defined recursively by the following relations (see \cite[Theorem 1]{MR0161834})
\begin{align*}
	&p_{-1}=1;\quad	p_0=a_0;\quad	p_k=a_kp_{k-1}+p_{k-2},\quad	1\leq k\leq n,	\\
	&q_{-1}=0;\quad	q_0=1;\quad	q_k=a_kq_{k-1}+q_{k-2},\quad	1\leq k\leq n.	
\end{align*}
Further, we have the identities (see \cite[Theorem 2]{MR0161834})
\begin{align}
	&\frac{p_{n-1}}{q_{n-1}}-\frac{p_n}{q_n}=\frac{(-1)^n}{q_nq_{n-1}},\quad n\geq1,\label{pre1}	\\
	&\frac{p_{n-2}}{q_{n-2}}-\frac{p_n}{q_n}=\frac{(-1)^{n-1}a_n}{q_nq_{n-2}},\quad n\geq2.\label{pre2}
\end{align}
The denominators of the consecutive convergents satisfy (see \cite[Theorem 6]{MR0161834})
\begin{equation}\label{pre3}
	\frac{q_n}{q_{n-1}}=[a_n;a_{n-1},a_{n-2},\ldots,a_1],\quad	n\geq1.
\end{equation}
The numerators satisfy a similar formula (see \cite{MR4632209}) that if $n\geq1$, $a_0\geq0$ and $n+a_0\ne1$, then
\begin{equation}\label{pre4}
	\frac{p_n}{p_{n-1}}=\left\{\begin{aligned}
		&[a_n;a_{n-1},a_{n-2},a_{n-3},\ldots,a_2,a_1,a_0]&\text{for}\ a_0\ne0;	\\
		&[a_n;a_{n-1},a_{n-2},a_{n-3},\ldots,a_2]&\text{for}\ a_0=0.
	\end{aligned}\right.
\end{equation}

Given $a_1,a_2,\ldots,a_n,$ we call
\begin{equation*}
	I(a_1,a_2,\ldots,a_n)=\left\{\begin{aligned}
		&\left[\frac{p_n}{q_n},\frac{p_n+p_{n-1}}{q_n+q_{n-1}}\right),\quad\text{if $n$ is even},	\\
		&\left(\frac{p_n+p_{n-1}}{q_n+q_{n-1}},\frac{p_n}{q_n}\right],\quad\text{if $n$ is odd},	\\
	\end{aligned}\right.
\end{equation*} a \textit{cylinder of order $n$}.
In fact, $I(a_1,a_2,\ldots,a_n)$ represents the set of numbers in $I$ which have a continued fraction expansion beginning with $a_1,a_2,\ldots,a_n$, i.e.,
\begin{equation*}
	I(a_1,a_2,\ldots,a_n)=\{x\in I\colon a_1(x)=a_1,a_2(x)=a_2,\ldots,a_n(x)=a_n\}.
\end{equation*}
It is well known, (see \cite{MR0161834}), that \begin{equation}\label{pre5}
	\abs{I(a_1,a_2,\ldots,a_n)}=\frac{1}{q_n(q_n+q_{n-1})}.
\end{equation}

To compare two continued fractions, we need the following proposition (\cite{MR4632209}, Proposition 2.1):
\begin{prop}
	Let \begin{align*}
		\alpha&=[a_0;a_1,a_2,\ldots,a_{n-1},a_n,a_{n+1},\ldots],	\\
		\beta&=[b_0;b_1,b_2,\ldots,b_{n-1},b_n,b_{n+1},\ldots].
	\end{align*}
	Then 
	\begin{equation*}
		\alpha-\beta=\frac{(-1)^n([a_n;a_{n+1},\ldots]-[b_n;b_{n+1},\ldots])}{([a_n;a_{n+1},\ldots]q_{n-1}+q_{n-2})([b_n;b_{n+1},\ldots]q_{n-1}+q_{n-2})}.
	\end{equation*}
	If $a_n\ne b_n$, then
	\begin{equation*}
		\alpha<\beta\quad\Leftrightarrow\quad(n \text{ is even and } a_n<b_n)\text{ or } (n \text{ is odd and } a_n>b_n).
	\end{equation*}
	If $\alpha=[a_0;a_1,a_2,\ldots,a_n]$ and $\beta=[a_0;a_1,a_2,\ldots,a_n,a_{n+1},\ldots]$, then $\alpha<\beta$ if and only if $n$ is even.
\end{prop}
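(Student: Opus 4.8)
The plan is to reduce everything to the representation of a continued fraction as a M\"obius image of its tail. Note first that, since the asserted formula contains only one pair $q_{n-1},q_{n-2}$, it is implicit (and I would state it explicitly) that $\alpha$ and $\beta$ agree in the partial quotients of index $<n$, i.e. $b_i=a_i$ for $0\le i\le n-1$, so that $p_k/q_k$ ($k\le n-1$) are common convergents of both, and that $n\ge1$. Set $x\coloneqq[a_n;a_{n+1},\ldots]$ and $y\coloneqq[b_n;b_{n+1},\ldots]$ for the two tails. Iterating the recursions $p_k=a_kp_{k-1}+p_{k-2}$, $q_k=a_kq_{k-1}+q_{k-2}$ one more step gives the standard identities
\[
\alpha=\frac{xp_{n-1}+p_{n-2}}{xq_{n-1}+q_{n-2}},\qquad
\beta=\frac{yp_{n-1}+p_{n-2}}{yq_{n-1}+q_{n-2}}.
\]

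First I would subtract these fractions over a common denominator. In the numerator every term involving the product $xy$ cancels, and what survives is $(x-y)\bigl(p_{n-1}q_{n-2}-p_{n-2}q_{n-1}\bigr)$. Rearranging \eqref{pre1} with index $n-1$ gives $p_{n-1}q_{n-2}-p_{n-2}q_{n-1}=(-1)^{n}$, so
\[
\alpha-\beta=\frac{(-1)^n(x-y)}{(xq_{n-1}+q_{n-2})(yq_{n-1}+q_{n-2})},
\]
which is the displayed formula (with $x,y$ as above). This step is pure algebra; the only thing to keep honest is the exponent coming out of \eqref{pre1}, together with the degenerate case $n=1$, where $q_{n-2}=q_{-1}=0$ but the denominator $xy$ is still positive.

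Next comes the sign analysis. Since $x,y\ge a_n\ge1$ and $q_{n-1}\ge1$, $q_{n-2}\ge0$ for $n\ge1$, the denominator above is strictly positive, whence $\operatorname{sign}(\alpha-\beta)=(-1)^n\operatorname{sign}(x-y)$. It remains to observe that, when $a_n\ne b_n$, the tails satisfy $\operatorname{sign}(x-y)=\operatorname{sign}(a_n-b_n)$: writing each tail as (its leading partial quotient) $+$ (a fractional part in $(0,1)$), if $a_n<b_n$ then $x<a_n+1\le b_n\le y$, hence $x<y$, and symmetrically if $a_n>b_n$. Combining, $\alpha<\beta$ precisely when $(-1)^n\operatorname{sign}(a_n-b_n)<0$, i.e. when ($n$ even and $a_n<b_n$) or ($n$ odd and $a_n>b_n$), which is the stated criterion.

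Finally, for $\alpha=[a_0;a_1,\ldots,a_n]=p_n/q_n$ and $\beta=[a_0;a_1,\ldots,a_n,a_{n+1},\ldots]$, I would again use the M\"obius representation: with $z\coloneqq[a_{n+1};a_{n+2},\ldots]\ge1$ one has $\beta=(zp_n+p_{n-1})/(zq_n+q_{n-1})$, and a one-line computation using $p_{n-1}q_n-p_nq_{n-1}=(-1)^n$ (again \eqref{pre1}) gives
\[
\beta-\alpha=\frac{(-1)^n}{q_n(zq_n+q_{n-1})},
\]
so $\alpha<\beta$ if and only if $n$ is even. I do not anticipate a real obstacle: the argument is entirely sign bookkeeping, and the three points requiring care are the implicit hypothesis $b_i=a_i$ for $i<n$, the exact power of $-1$ extracted from \eqref{pre1} at each use, and the strict inequality $x<a_n+1$ in the sign step (valid for the canonical continued fraction expansion).
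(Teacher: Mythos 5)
Your proof is correct. The paper itself gives no proof of this proposition -- it is quoted verbatim from Han\v{c}l and Turek \cite{MR4632209}, Proposition 2.1 -- so there is nothing to compare against; your argument via the M\"obius representation $\alpha=(xp_{n-1}+p_{n-2})/(xq_{n-1}+q_{n-2})$ of a continued fraction in terms of its tail, the determinant identity $p_{n-1}q_{n-2}-p_{n-2}q_{n-1}=(-1)^n$, and the sign bookkeeping is the standard one, and your caveats (the implicit hypothesis $a_i=b_i$ for $i<n$, the case $n=1$, and strictness of $x<a_n+1$ for canonical expansions) are exactly the right points to flag.
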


To do specific approximation on the size of sets of continued fractions, we also need the following lemmas.
\begin{lem}[{\cite[Lemma 2.1]{MR2215567}}]\label{lem1}
	For any $n\geq1$ and $1\leq k\leq n$, we have
	\begin{equation*}
		\frac{a_k+1}{2}\leq \frac{q_n(a_1,a_2,\ldots,a_n)}{q_{n-1}(a_1,\ldots,a_{k-1},a_{k+1},\ldots,a_n)}\leq a_k+1.
	\end{equation*}
\end{lem}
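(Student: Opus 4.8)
The plan is to prove both inequalities simultaneously by induction on $n-k$, exploiting the recursive structure of the continuants $q_m(a_1,\ldots,a_m)$. The key observation is that deleting the entry $a_k$ from the word $(a_1,\ldots,a_n)$ is a clean operation at the \emph{end} of the word (the case $k=n$) but requires a transfer-matrix / continuant identity in general. I would first recall that the continuant is symmetric under reversal, so that $q_n(a_1,\ldots,a_n)=q_n(a_n,\ldots,a_1)$, and that it obeys the product formula
\begin{equation*}
	q_n(a_1,\ldots,a_n)=q_k(a_1,\ldots,a_k)\,q_{n-k}(a_{k+1},\ldots,a_n)+q_{k-1}(a_1,\ldots,a_{k-1})\,q_{n-k-1}(a_{k+2},\ldots,a_n),
\end{equation*}
which is the standard ``splitting'' identity for continuants (equivalently, the factorization of the transfer matrix product at position $k$). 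An analogous splitting holds for $q_{n-1}(a_1,\ldots,a_{k-1},a_{k+1},\ldots,a_n)$, namely it equals
\begin{equation*}
	q_{k-1}(a_1,\ldots,a_{k-1})\,q_{n-k}(a_{k+1},\ldots,a_n)+q_{k-2}(a_1,\ldots,a_{k-2})\,q_{n-k-1}(a_{k+2},\ldots,a_n),
\end{equation*}
so that the ratio in question is a ratio of two sums of products of shorter continuants sharing the common factors $q_{n-k}(a_{k+1},\ldots,a_n)$ and $q_{n-k-1}(a_{k+2},\ldots,a_n)$.

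Write $P\coloneqq q_{k-1}(a_1,\ldots,a_{k-1})$, $P'\coloneqq q_{k-2}(a_1,\ldots,a_{k-2})$, $R\coloneqq q_{n-k}(a_{k+1},\ldots,a_n)$, $R'\coloneqq q_{n-k-1}(a_{k+2},\ldots,a_n)$. Then $q_k(a_1,\ldots,a_k)=a_kP+P'$ by the defining recursion, and the target ratio becomes
\begin{equation*}
	\frac{q_n}{q_{n-1}(\ldots\widehat{a_k}\ldots)}=\frac{(a_kP+P')R+P R'}{P R+P' R'}.
\end{equation*}
For the upper bound, note $(a_kP+P')R+PR'\le (a_k+1)\big(PR+P'R'\big)$ is equivalent, after expansion, to $a_k P R + P R' \le a_k(PR+P'R')+P'R$, i.e. to $PR'\le a_kP'R+P'R=(a_k+1)P'R$ when we are crude, but more sharply to $PR'\le a_kP'R+P'R$; this follows from $P\le (a_{k-1}+1)P'\le \text{(something)}$ together with $R'\le R$, and I would discharge it using the elementary monotonicity $q_{m-1}\le q_m$ and the one-step bound $q_m\le (a_m+1)q_{m-1}$. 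For the lower bound, $(a_k+1)\le 2\cdot\frac{(a_kP+P')R+PR'}{PR+P'R'}$ reduces similarly to $(a_k+1)(PR+P'R')\le 2(a_kP+P')R+2PR'$, i.e. $(a_k-1)PR+(a_k+1)P'R'\le 2P'R+2PR'$; using $P'\le P$, $R'\le R$, and $P\ge P'$, $R\ge R'$ this is a finite check.

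The main obstacle, and the step I would spend the most care on, is getting the constants exactly right in the crude estimates $PR'\le a_kP'R+P'R$ and its lower-bound analogue: the factor $2$ in the lower bound is tight (it is attained in the limit $a_1=a_2=\cdots=1$, $k$ in the interior), so any slack in the inequalities $q_{k-1}\le 2q_{k-2}$-type bounds must be tracked precisely rather than absorbed. An alternative, cleaner route that sidesteps this bookkeeping is to use the identity $\frac{q_n(a_1,\ldots,a_n)}{q_{n-1}(a_1,\ldots,\widehat{a_k},\ldots,a_n)}=a_k+\frac{q_{k-1}}{q_k}\cdot\frac{R}{q_{n-1}(\ldots\widehat{a_k}\ldots)}+\frac{\text{(correction)}}{\cdots}$ obtained by comparing the transfer matrices with and without the factor $\begin{psmallmatrix}a_k&1\\1&0\end{psmallmatrix}$ inserted; this exhibits the ratio as $a_k$ plus a quantity provably in $[0,1)$ for the upper bound and plus a quantity provably in $(-\tfrac{1}{2}(a_k+1),\,0]$ rearranged for the lower bound, which is likely how the cited source \cite{MR2215567} proceeds. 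Either way the proof is a short, self-contained manipulation of continuant identities with no appeal to the earlier sections.
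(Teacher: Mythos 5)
The paper does not prove this lemma: it is cited from \cite{MR2215567} and used as a black box, so there is nothing in the paper to compare against. Your Euler-splitting strategy is a legitimate self-contained route, and the set-up is correct: with $P=q_{k-1}(a_1,\ldots,a_{k-1})$, $P'=q_{k-2}(a_1,\ldots,a_{k-2})$, $R=q_{n-k}(a_{k+1},\ldots,a_n)$, $R'=q_{n-k-1}(a_{k+2},\ldots,a_n)$, the two splits do give the ratio $\frac{(a_kP+P')R+PR'}{PR+P'R'}$, and both bounds of the lemma genuinely follow from $P\ge P'$, $R\ge R'$ (the latter via reversal symmetry), and $a_k\ge1$ applied to this expression.

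The problem is that the ``after expansion'' inequalities you reduce to are not what the expansion actually gives, and as written they are false. For the upper bound, clearing denominators and cancelling gives $(P'-P)(R-R')\le a_kP'R'$, which holds trivially since the left side is nonpositive. Your stated reduction $PR'\le a_kP'R'+P'R$ is a different inequality and fails in concrete cases: with $n=3$, $k=2$, $a_1=100$, $a_2=1$, $a_3=2$ one has $P=100$, $P'=1$, $R=2$, $R'=1$ and $PR'=100>3=a_kP'R'+P'R$, even though the lemma itself holds there ($q_3/q_2=302/201$). For the lower bound, the correct expansion is $(1-a_k)PR+(a_k+1)P'R'\le 2P'R+2PR'$; you wrote $(a_k-1)PR$, a sign error, and your version fails already at $n=3$, $k=2$, $a_1=a_3=1$, $a_2=3$, where $P=P'=R=R'=1$ gives $6\le 4$. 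The corrected inequality is immediate: since $1-a_k\le 0$ and $PR\ge P'R'$, one has $(1-a_k)PR\le(1-a_k)P'R'$, so the left side is at most $2P'R'\le 2P'R\le 2P'R+2PR'$. The method is sound and is easily repaired by redoing the algebra, but the reductions as written are incorrect, and the ``finite check'' you defer would, if actually performed on your stated inequalities, reveal that they are not the right ones.
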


\begin{lem}[\cite{MR0161834}]\label{lem2}
	For any $n\geq1$ and $k\geq1$, we have
	\begin{align*}
		q_{n+k}(a_1,\ldots,a_n,a_{n+1},\ldots,a_{n+k})\geq q_n(a_1,\ldots,a_n)\cdot q_k(a_{n+1},\ldots,a_{n+k}),	\\
		q_{n+k}(a_1,\ldots,a_n,a_{n+1},\ldots,a_{n+k})\leq 2q_n(a_1,\ldots,a_n)\cdot q_k(a_{n+1},\ldots,a_{n+k}).
	\end{align*}
\end{lem}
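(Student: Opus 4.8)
The plan is to establish both inequalities at once by induction on $k$, with the prefix length $n$ fixed but arbitrary, using only the fundamental recursion $q_m=a_mq_{m-1}+q_{m-2}$ and the elementary monotonicity $q_{m-1}\le q_m$, which is itself immediate from that recursion since $a_m\ge1$ and $q_{m-2}\ge0$. To lighten the notation, write $q_j'\coloneqq q_j(a_{n+1},a_{n+2},\ldots,a_{n+j})$ for the continuant built from the tail block $a_{n+1},\ldots,a_{n+j}$; this tail continuant obeys the very same recursion $q_j'=a_{n+j}q_{j-1}'+q_{j-2}'$, and peeling the last digit off $q_{n+k}(a_1,\ldots,a_{n+k})$ gives
\[
q_{n+k}(a_1,\ldots,a_{n+k})=a_{n+k}\,q_{n+k-1}(a_1,\ldots,a_{n+k-1})+q_{n+k-2}(a_1,\ldots,a_{n+k-2}).
\]
The whole argument rests on the observation that this recursion has exactly the same shape as the one governing $q_k'$, so the product bounds propagate.

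I would dispose of the base cases $k=1$ and $k=2$ by direct computation, since for those values the recursion above reaches back to continuants with an empty or negative tail block, to which the inductive hypothesis does not apply. For $k=1$, $q_{n+1}=a_{n+1}q_n+q_{n-1}$ and $q_1'=a_{n+1}$, so from $0\le q_{n-1}\le q_n$ and $a_{n+1}\ge1$ we get $q_nq_1'\le q_{n+1}\le(a_{n+1}+1)q_n\le 2a_{n+1}q_n=2q_nq_1'$. The case $k=2$ is the same computation applied to $q_{n+2}=a_{n+2}q_{n+1}+q_n$, now using the $k=1$ bounds on $q_{n+1}$ together with $q_0'=1$.

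For the inductive step ($k\ge3$), I would assume the lemma holds with $k$ replaced by $k-1$ and by $k-2$. Feeding the lower bounds $q_{n+k-1}\ge q_nq_{k-1}'$ and $q_{n+k-2}\ge q_nq_{k-2}'$ into the displayed recursion gives $q_{n+k}\ge q_n\bigl(a_{n+k}q_{k-1}'+q_{k-2}'\bigr)=q_nq_k'$, while feeding in the upper bounds $q_{n+k-1}\le 2q_nq_{k-1}'$ and $q_{n+k-2}\le 2q_nq_{k-2}'$ gives $q_{n+k}\le 2q_n\bigl(a_{n+k}q_{k-1}'+q_{k-2}'\bigr)=2q_nq_k'$. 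This closes the induction.

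No step here is genuinely hard; the only place demanding any care is the bookkeeping in the base cases, precisely because the three-term recursion cannot be unwound past $q_{-1}=0$, so the cases $k=1$ and $k=2$ must be verified by hand rather than inferred. If one prefers to avoid the two-term recursion entirely, an alternative is to invoke the classical three-term continuant identity $q_{n+k}(a_1,\ldots,a_{n+k})=q_nq_k'+q_{n-1}(a_1,\ldots,a_{n-1})\,q_{k-1}(a_{n+2},\ldots,a_{n+k})$: the lower bound is then immediate from nonnegativity of the second term, and the upper bound follows from $q_{n-1}(a_1,\ldots,a_{n-1})\le q_n$ together with $q_{k-1}(a_{n+2},\ldots,a_{n+k})\le q_k'$ (the latter obtained from the recursion after reversing the tail block via \eqref{pre3}), so that the second term never exceeds $q_nq_k'$.
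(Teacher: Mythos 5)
Your proof is correct. The paper gives no proof of this lemma --- it is quoted directly from Khintchine's book --- so there is nothing internal to compare against; your two-step induction on $k$ via the recursion $q_m=a_mq_{m-1}+q_{m-2}$, with $k=1,2$ checked by hand and the monotonicity $q_{m-1}\le q_m$ supplying the factor $2$ in the upper bound, is the standard argument, and the Euler continuant identity you mention at the end is the usual one-line alternative.
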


\subsection{Construction of auxiliary function}

Let $\mathcal{A}\subseteq \mathbb{N}$ be a finite or infinite subset and $B>1$ be a fixed real number. For any integer $n\geq1$ and real number $\rho\geq0$, we define
\begin{equation*}
	f_{n,B}(\rho)=\sum_{a_2,a_4,\ldots,a_{2n}\in\mathcal{A}}\frac{1}{(B^{2n}q_{2n}^2)^\rho}.
\end{equation*}
It is easy to see that $f_{n,B}(\rho)$ is decreasing and $f_{n,B}(1)\leq1$. We define 
\begin{equation*}
	s_{n,B}(\mathcal{A})=\inf\{\rho\geq0:f_{n,B}(\rho)\leq1\}.
\end{equation*}

\begin{rem}\label{rem1}
	If $\mathcal{A}\subseteq\mathbb{N}$ is finite, we have $f_{n,B}(s_{n,B}(\mathcal{A}))=1$. If $\mathcal{A}\subseteq\mathbb{N}$ is infinite, we have $f_{n,B}(s_{n,B}(\mathcal{A}))\leq1$.
\end{rem}

\begin{lem}\label{lem3}
	We have $s_B(\mathcal{A})\coloneqq\lim_{n\rightarrow\infty}s_{n,B}(\mathcal{A})$ exists, and $0\leq s_B(\mathcal{A})\leq1$.
\end{lem}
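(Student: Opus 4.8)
The plan is to show that the sequence $\{s_{n,B}(\mathcal{A})\}_{n\geq 1}$ is bounded in $[0,1]$ and is (eventually) monotone, or at least Cauchy, via a submultiplicativity estimate on the partition functions $f_{n,B}$. First I would record that $s_{n,B}(\mathcal{A}) \in [0,1]$ for every $n$: since $f_{n,B}$ is decreasing in $\rho$ and $f_{n,B}(1) \leq 1$ (using $\abs{I(1,a_2,1,a_4,\ldots,1,a_{2n})} = 1/(q_{2n}(q_{2n}+q_{2n-1}))$ and that these cylinders are disjoint subsets of $I$, together with $B^{2n} > 1$, so the $\rho=1$ sum is dominated by $\sum \abs{I(\cdots)} \leq 1$), we get $s_{n,B}(\mathcal{A}) \leq 1$; and $f_{n,B}(0) = \#(\text{index set}) \geq 1$, so the infimum is over a nonempty set and $s_{n,B}(\mathcal{A}) \geq 0$. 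Hence the sequence lives in the compact interval $[0,1]$.

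The key step is a comparison between $f_{m+n,B}$ and the product $f_{m,B} \cdot f_{n,B}$ exploiting Lemma \ref{lem2}: writing $q_{2(m+n)}$ for the word $(1,a_2,\ldots,1,a_{2m},1,a_{2m+2},\ldots,1,a_{2(m+n)})$, the split $q_{2m+2n} \geq q_{2m}(1,a_2,\ldots,1,a_{2m}) \cdot q_{2n}(1,a_{2m+2},\ldots,1,a_{2(m+n)})$ (and the reverse inequality with a factor $2$) gives, for any $\rho \geq 0$,
\begin{equation*}
	f_{m+n,B}(\rho) \leq \sum_{a_2,\ldots,a_{2m}} \sum_{a_{2m+2},\ldots,a_{2(m+n)}} \frac{1}{(B^{2m}q_{2m}^2)^\rho (B^{2n}q_{2n}^2)^\rho} = f_{m,B}(\rho)\, f_{n,B}(\rho),
\end{equation*}
and similarly $f_{m+n,B}(\rho) \geq 2^{-2\rho} f_{m,B}(\rho) f_{n,B}(\rho)$. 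From the upper bound, evaluating at $\rho = \max\{s_{m,B}(\mathcal{A}), s_{n,B}(\mathcal{A})\}$ forces $f_{m+n,B}(\rho) \leq 1$ (each factor is $\leq 1$ there, by Remark \ref{rem1} and monotonicity), whence $s_{m+n,B}(\mathcal{A}) \leq \max\{s_{m,B}, s_{n,B}\}$; from the lower bound one extracts a matching inequality showing $s_{m+n,B}(\mathcal{A})$ cannot be much smaller than $\min\{s_{m,B}, s_{n,B}\}$ up to an error controlled by the $2^{-2\rho}$ factor. I would then convert this into convergence: either by noting $\limsup_n s_{n,B} \leq s_{N,B} \leq \liminf_n s_{n,B}$ for a well-chosen subsequence, or — cleaner — by passing to the auxiliary quantity $g_n(\rho) := \inf\{\rho : f_{n,B}(\rho)\le 1\}$ and observing that $\log f_{n,B}$ is subadditive along $n$ after the $B^{2n}$ normalization, so a Fekete-type argument applies. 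Concretely, I would fix $\rho > \limsup s_{n,B}$ and $\rho' < \liminf s_{n,B}$ and derive a contradiction from the sub/supermultiplicative bounds unless $\limsup = \liminf$.

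The main obstacle I anticipate is handling the case $\mathcal{A}$ infinite, where $f_{n,B}(s_{n,B})$ may be strictly less than $1$ (Remark \ref{rem1}), so one loses the clean equality $f_{n,B}(s_{n,B}) = 1$ that makes the finite case transparent; there I would need to argue with $\rho$ slightly above $s_{n,B}$ (where the sum is genuinely $\le 1$ and still finite, since $q_{2n}^2 \gg a_{2n}^2$ makes the tail converge for $\rho > 1/2$) and slightly below (where it exceeds $1$), and push the submultiplicativity estimate through these approximate thresholds, letting the slack go to zero at the end. A secondary nuisance is the factor $2^{-2\rho}$ in the lower bound for $f_{m+n,B}$: it is harmless because $\rho \leq 1$ keeps it bounded below by $1/4$, and it only perturbs the comparison of the $s_{n,B}$ by a term that vanishes after normalizing by $m+n$ and sending $m,n \to \infty$.
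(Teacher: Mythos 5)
Your proposal correctly identifies the two key inequalities the proof hinges on — the submultiplicativity $f_{m+n,B}(\rho)\le f_{m,B}(\rho)f_{n,B}(\rho)$ and the near-supermultiplicativity $f_{m+n,B}(\rho)\ge\tfrac14 f_{m,B}(\rho)f_{n,B}(\rho)$, both from Lemma~\ref{lem2} — and you correctly deduce $s_{m+n,B}(\mathcal{A})\le\max\{s_{m,B}(\mathcal{A}),s_{n,B}(\mathcal{A})\}$ and flag the delicacy when $\mathcal{A}$ is infinite. So the ingredients match the paper's. But the step that actually makes the lemma true — converting those two bounds into existence of the limit — is left as a menu of options rather than executed, and neither option you float works as stated.

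The ``Fekete-type argument'' does not apply here: $\{s_{n,B}(\mathcal{A})\}$ satisfies only the \emph{max-subadditive} relation $s_{m+n,B}\le\max\{s_{m,B},s_{n,B}\}$, not additive subadditivity, and there is no division by $n$ — one wants the limit of $s_{n,B}$ itself, not of $s_{n,B}/n$. Max-subadditivity alone is far too weak (e.g.\ any sequence with $s_1$ as its maximum satisfies it and need not converge), so something quantitative from the supermultiplicative side is essential, and your proposal only says a ``matching inequality'' can be ``extracted'' without doing so. The second option, fixing $\rho>\limsup$ and $\rho'<\liminf$ and deriving a contradiction, is the right shape but the contradiction does not fall out of the sub/supermultiplicativity on its own. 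What the paper actually does is show $\bar s:=\limsup_n s_{n,B}(\mathcal{A})$ is a \emph{lower bound for every} $s_{n,B}(\mathcal{A})$. It proves this in two steps: first, that $\bar s$ is attained along arbitrarily large $n$ (a pigeonhole using only max-subadditivity and $s_{rn,B}\le s_{n,B}$); second, and crucially, that $f_{rn,B}(\rho)\ge 4^{-(r-1)}f_{n,B}(\rho)^r$ implies $f_{n,B}(s_{rn,B}(\mathcal{A}))\le 4$ for all $n,r$, which, combined with the uniform decay factor $(B^{2p}q_{2p}^2)^{-\varepsilon}\to 0$, forces $f_{p,B}(s_{r_0p,B}+\varepsilon)<1$ for large $p$ and hence a contradiction if some $s_{r_0,B}<\bar s$. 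This quantitative bound $f_{n,B}(s_{rn,B}(\mathcal{A}))\le 4$ is the piece that makes the argument close, including in the infinite-$\mathcal{A}$ case you worry about (it sidesteps the possible failure of $f_{n,B}(s_{n,B})=1$ entirely), and it is absent from your proposal. Without it, the plan stops one step short of a proof.
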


\begin{proof}
	By Lemma \ref{lem2}, we have $q_{2(n+k)}\geq q_{2n}q_{2k}$ for any $n\geq1$ and $k\geq1$. Thus $f_{n+k}(\rho)\leq f_{n,B}(\rho)f_{k,B}(\rho)$ for any $\rho\geq0$. This implies 
	\begin{equation*}
		s_{n+k,B}(\mathcal{A})\leq\max\{s_{n,B}(\mathcal{A}),s_{k,B}(\mathcal{A})\}.
	\end{equation*}
	Observe that $s_{rn,B}(\mathcal{A})\leq s_{n,B}(\mathcal{A})$ for each $r\geq 1$. Thus, for any $r\geq1$ and $t\geq1$, \begin{equation*}
		s_{rn+tk,B}(\mathcal{A})\leq\max\{s_{n,B}(\mathcal{A}),s_{k,B}(\mathcal{A})\}.
	\end{equation*}
	We write $\bar{s}\coloneqq\limsup_{n\rightarrow\infty}s_{n,B}(\mathcal{A})$.
	
	\subsubsection*{\textbf{\emph{Claim 1.}}} For any $N\in\mathbb{N}$, there exists $n\geq N$ such that $s_{n,B}(\mathcal{A})\geq\bar{s}$.
	
	In fact, if there exists $N_0\in\mathbb{N}$, such that for all $n\geq N_0$, $s_{n,B}(\mathcal{A})<\bar{s}$, then \begin{equation*}
		\max\{s_{N_0,B}(\mathcal{A}),s_{N_0+1,B}(\mathcal{A}),\ldots,s_{2N_0-1,B}(\mathcal{A})\}<\bar{s}.
	\end{equation*}
	For any $n\geq2N_0$, write $n=kN_0+r$, $0\leq r\leq N_0-1$, $k\geq2$. Then we have\begin{equation*}
		s_{n,B}(\mathcal{A})=s_{(k-1)N_0+(N_0+r)}(\mathcal{A})\leq\max\{s_{N_0,B}(\mathcal{A}),s_{N_0+r,B}(\mathcal{A})\}<\bar{s}.
	\end{equation*}
	This gives $\limsup_{n\rightarrow\infty}s_{n,B}(\mathcal{A})<\bar{s}$, which follows a contradiction.
	
	For any $n\geq1$ and $k\geq1$, by Lemma \ref{lem2}, since $q_{2(n+k)}\leq q_{2n}q_{2k}$, we have $f_{n+k,B}(\rho)\geq\frac{1}{4}f_{n,B}(\rho)f_{k,B}(\rho)$ for any $0\leq\rho\leq1$. Thus, for any $r\geq1$,
	\begin{equation*}
		f_{rn,B}(\rho)\geq\left(\frac{1}{2}\right)^{2(r-1)}f_{n,B}(\rho)^r.
	\end{equation*}
	For any $n\geq1$ and $r\geq1$, we have $f_{n,B}(s_{rn,B}(\mathcal{A}))^r\leq2^{2(r-1)}f_{rn,B}(s_{rn,B}(\mathcal{A}))<4^{r}$. Then, we immediately have 
	\begin{equation*}
		f_{n,B}(s_{rn,B}(\mathcal{A}))\leq4.
	\end{equation*}
	
	\subsubsection*{\textbf{\emph{Claim 2.}}} For any $r\in\mathbb{N}$, $s_{r,B}(\mathcal{A})\geq\bar{s}$.
	
	By contrary, we suppose that there exists $r_0$ such that $s_{r_0,B}(\mathcal{A})<\bar{s}$. Write $\varepsilon=\bar{s}-s_{r_0,B}$. By Claim 1, we can choose $p$ large enough such that $s_{p,B}(\mathcal{A})\geq\bar{s}$ and $(B^{2p}2^{2p})^\varepsilon>4$. Then, we have
	\begin{align*}
		f_{p,B}(s_{r_0p,B}(\mathcal{A})+\varepsilon)=&\sum_{a_2,\ldots,a_{2p}\in\mathcal{A}}\frac{1}{(B^{2p}q_{2p}^2)^{s_{r_0p,B}(\mathcal{A})+\varepsilon}}	\\
		\leq&\frac{1}{(B^{2p}q_{2p}^2)^\varepsilon}\cdot f_{p,B}(s_{r_0p,B}(\mathcal{A}))<1.
	\end{align*}
	This implies that $s_{p,B}(\mathcal{A})\leq s_{r_0p,B}(\mathcal{A})+\varepsilon\leq s_{r_0,B}(\mathcal{A})+\varepsilon=\bar{s}$, which follows a contradiction. Thus for any $n\in\mathbb{N}$, $s_{n,B}(\mathcal{A})\geq\bar{s}$.
\end{proof}

For any $\alpha\in\mathbb{N}$, take $\mathcal{A_\alpha}=\{1,2,\ldots,\alpha\}$. For simplicity, we write $s_{n,B}(\alpha)$ for $s_{n,B}(\mathcal{A_\alpha})$, $s_B(\alpha)$ for $s_B(\mathcal{A_\alpha})$, $s_{n,B}$ for $s_{n,B}(\mathbb{N})$ and $s_B$ for $s_B(\mathbb{N})$.

\begin{lem}\label{editlem}
	We have
	\begin{equation*}
		\lim_{\alpha\rightarrow\infty}s_B(\alpha)=s_B.
	\end{equation*}
\end{lem}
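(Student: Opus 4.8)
The plan is to prove the two inequalities $\lim_{\alpha\to\infty}s_B(\alpha)\le s_B$ and $\lim_{\alpha\to\infty}s_B(\alpha)\ge s_B$ separately; the first is soft and the second carries all the work. Throughout, write $f^{\alpha}_{n,B}$ and $s_{n,B}(\alpha)$ for the objects $f_{n,B}$, $s_{n,B}(\mathcal A)$ of the excerpt built from $\mathcal A=\mathcal A_\alpha=\{1,\dots,\alpha\}$, and $f_{n,B}$, $s_{n,B}$ for those built from $\mathcal A=\mathbb N$. For the easy direction, since $\mathcal A_\alpha\subseteq\mathcal A_{\alpha+1}\subseteq\mathbb N$ the sums obey $f^{\alpha}_{n,B}(\rho)\le f^{\alpha+1}_{n,B}(\rho)\le f_{n,B}(\rho)$ for every $\rho$, hence $s_{n,B}(\alpha)\le s_{n,B}(\alpha+1)\le s_{n,B}$, and letting $n\to\infty$, $s_B(\alpha)\le s_B(\alpha+1)\le s_B$. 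Thus $\lim_\alpha s_B(\alpha)$ exists and is at most $s_B$.

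For the reverse inequality I would fix an arbitrary $t\in[0,s_B)$ and construct a single level $\alpha_0$ with $s_B(\alpha_0)\ge t$; by the monotonicity just established this yields $\lim_\alpha s_B(\alpha)\ge t$, and then letting $t\uparrow s_B$ finishes the proof. First I record two consequences of the proof of Lemma \ref{lem3} applied with $\mathcal A=\mathbb N$, namely $s_B=\inf_n s_{n,B}$ and $s_{n,B}\ge s_B$ for every $n$; since $t<s_B\le s_{n,B}$ this gives $f_{n,B}(t)>1$ (strictly) for every $n$. Next I show $f_{m,B}(t)\to\infty$ as $m\to\infty$: choosing $t'\in(t,s_B)$ we have $f_{m,B}(t')>1$, and since $q_{2m}\ge1$ each summand of $f_{m,B}(t)$ satisfies $(B^{2m}q_{2m}^2)^{-t}\ge B^{2m(t'-t)}(B^{2m}q_{2m}^2)^{-t'}$, whence $f_{m,B}(t)\ge B^{2m(t'-t)}f_{m,B}(t')\ge B^{2m(t'-t)}$, which tends to infinity because $B>1$. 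I then fix once and for all an integer $m$ with $f_{m,B}(t)>4$ and, using that $f^{\alpha}_{n,B}(\rho)$ increases to $f_{n,B}(\rho)$ as $\alpha\to\infty$ for each fixed $n$, together with $f_{m,B}(t)>4$ and $f_{j,B}(t)>1$ for $1\le j\le m-1$, choose $\alpha_0$ so large that $f^{\alpha_0}_{m,B}(t)\ge4$ and $f^{\alpha_0}_{j,B}(t)\ge1$ for every $1\le j\le m$.

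The heart of the argument is to promote these finitely many inequalities to $f^{\alpha_0}_{n,B}(t)\ge1$ for all $n\ge1$, and for this I would invoke the same submultiplicativity used in Lemma \ref{lem3}: from $q_{2(n+k)}\le 2q_{2n}q_{2k}$ (Lemma \ref{lem2}) and $t\le s_B\le1$ one obtains $f^{\alpha_0}_{n+k,B}(t)\ge\tfrac14 f^{\alpha_0}_{n,B}(t)\,f^{\alpha_0}_{k,B}(t)$, so that $g_n\coloneqq\tfrac14 f^{\alpha_0}_{n,B}(t)$ is supermultiplicative, $g_{n+k}\ge g_n g_k$. Writing an arbitrary $n\ge1$ as $n=rm+j$ with $r\ge0$ and $1\le j\le m$, supermultiplicativity gives $g_n\ge g_m^{\,r}g_j$; since $g_m=\tfrac14 f^{\alpha_0}_{m,B}(t)\ge1$ and $g_j=\tfrac14 f^{\alpha_0}_{j,B}(t)\ge\tfrac14$ for every $1\le j\le m$, we get $g_n\ge\tfrac14$, i.e. $f^{\alpha_0}_{n,B}(t)\ge1$, for every $n$. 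Because $\mathcal A_{\alpha_0}$ is finite, $f^{\alpha_0}_{n,B}$ is continuous and strictly decreasing with $f^{\alpha_0}_{n,B}(s_{n,B}(\alpha_0))=1$ by Remark \ref{rem1}; hence $f^{\alpha_0}_{n,B}(t)\ge1$ forces $t\le s_{n,B}(\alpha_0)$, and letting $n\to\infty$ gives $s_B(\alpha_0)\ge t$, as required.

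The single delicate point is the one just described: producing one truncation level $\alpha_0$ that simultaneously controls $f^{\alpha_0}_{n,B}(t)$ for all $n$. Supermultiplicativity is exactly what makes this possible, collapsing an infinite system of inequalities into the finitely many conditions at $n=1,\dots,m$ plus the single amplification bound $f^{\alpha_0}_{m,B}(t)\ge4$, and that bound is affordable only because the factor $B^{2m}$ with $B>1$ drives $f_{m,B}(t)$ to infinity. A secondary thing to keep honest is strictness: the inequalities $f_{j,B}(t)>1$ must be strict, so that their finite truncations eventually exceed $1$, which holds since $t<s_B\le s_{j,B}$ strictly.
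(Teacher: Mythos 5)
Your proof is correct, but it follows a genuinely different route from the paper. The paper extracts from the proof of Lemma \ref{lem3} the uniform estimate $f_{p,B}\big(s_{rp,B}(\mathcal A)+\varepsilon\big)\leq 4/B^{2p\varepsilon}<1$ for all $p\geq p_0(B,\varepsilon)$ and all alphabets $\mathcal A$ simultaneously, which yields $\abs{s_{p,B}(\mathcal A)-s_B(\mathcal A)}\leq\varepsilon$ uniformly in $\mathcal A$; it then fixes one such $p_0$, notes that $s_{p_0,B}(\alpha)\uparrow s_{p_0,B}$ as $\alpha\to\infty$ (a single finite-$n$ limit), and closes with the triangle inequality $\abs{s_B(\alpha)-s_B}\leq\abs{s_B(\alpha)-s_{p_0,B}(\alpha)}+\abs{s_{p_0,B}(\alpha)-s_{p_0,B}}+\abs{s_{p_0,B}-s_B}\leq3\varepsilon$. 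You instead fix a target $t<s_B$, use $s_B=\inf_n s_{n,B}$ to get $f_{n,B}(t)>1$ for all $n$, amplify $f_{m,B}(t)\geq B^{2m(t'-t)}f_{m,B}(t')\to\infty$ to find $m$ with $f_{m,B}(t)>4$, truncate to a finite alphabet $\mathcal A_{\alpha_0}$ preserving the finitely many inequalities $f^{\alpha_0}_{m,B}(t)\geq4$ and $f^{\alpha_0}_{j,B}(t)\geq1$ ($1\leq j\leq m$), and then run the supermultiplicative bound $g_{n+k}\geq g_n g_k$ with $g_n=\tfrac14 f^{\alpha_0}_{n,B}(t)$ to propagate to all $n$, giving $s_B(\alpha_0)\geq t$. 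Both arguments ultimately rest on the same supermultiplicativity $f_{n+k,B}\geq\tfrac14 f_{n,B}f_{k,B}$ from Lemma \ref{lem2} and the amplifying factor $B^{2n}$; the paper packages this as a uniform convergence rate plus a triangle inequality (reusing the inequality $f_{p,B}(s_{rp,B}(\mathcal A))\leq4$ already established), while your argument works directly at the level of the pressure-type sums, pinning down a single achieving truncation level. Your version is somewhat longer but makes the uniformity mechanism more visible; the paper's is more compact at the cost of leaving the uniformity of $p_0$ over $\mathcal A$ implicit.
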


\begin{proof}
	
	For any $\varepsilon>0$, choose $p_0\in\mathbb{N}$ large enough. Then, for any $p\geq p_0$, $r\geq 1$, we have \begin{equation*}
		f_{p,B}(s_{rp,B}(\mathcal{A})+\varepsilon)<1.
	\end{equation*} This implies that $s_{rp,B}(\mathcal{A})-\varepsilon\leq s_{p,B}(\mathcal{A})-\varepsilon\leq s_{rp,B}(\mathcal{A})$, i.e., $\abs{s_{p,B}(\mathcal{A})-s_{rp,B}(\mathcal{A})}\leq\varepsilon$. Taking $r\rightarrow\infty$, we have \begin{equation*}
		\abs{s_{p,B}(\alpha)-s_{B}(\alpha)}\leq\varepsilon\quad\text{and}\quad\abs{s_{p,B}-s_{B}}\leq\varepsilon.
	\end{equation*}
	Notice that $s_{p_0,B}(\alpha)$ is increasing with respect to $\alpha$. Then $\lim_{\alpha\rightarrow\infty}s_{p_0,B}(\alpha)=s_{p_0,B}$. Thus, there exists $\alpha_0$ such that for any $\alpha\geq\alpha_0$, $\abs{s_{p_0,B}(\alpha)-s_{p_0,B}}\leq\varepsilon$. Then, for any $\alpha\geq\alpha_0$,\begin{equation*}
		\abs{s_{B}(\alpha)-s_{B}}\leq3\varepsilon.
	\end{equation*} Since $\varepsilon$ is arbitrary, this finishes our proof.
\end{proof}

\begin{prop}\label{lem4}
	We have $\lim_{B\rightarrow\infty}s_B=1/2$. Furthermore, $s_B$ is continuous with respect to $B\in(1,\infty)$, and the right limit of $s_B$ at $B=1$ exists, denoted by $s_1$.
\end{prop}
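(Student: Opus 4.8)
The plan is to establish the three claims in order: that $\lim_{B\to\infty}s_B=1/2$, that $s_B$ is continuous on $(1,\infty)$, and that the right limit at $B=1$ exists. For the limit at infinity, I would first obtain two-sided control on $q_{2n}$. The lower bound $q_{2n}\geq 2^{n-1}\prod_{k=1}^{n}a_{2k}$ follows from the recursion (with odd partial quotients equal to $1$, two steps of the recursion give roughly $q_{2k}\geq(a_{2k}+1)q_{2k-2}\geq a_{2k}q_{2k-2}$), and a matching upper bound $q_{2n}\leq C^n\prod_{k=1}^n (a_{2k}+1)$ comes from Lemma \ref{lem2} applied blockwise together with $q_2(1,a_2)=a_2+1$. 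Feeding these into the definition of $f_{n,B}(\rho)$, the sum $\sum_{a_2,\dots,a_{2n}}(B^{2n}q_{2n}^2)^{-\rho}$ is comparable to $B^{-2n\rho}\big(\sum_{a\in\mathbb N}(a+1)^{-2\rho}\big)^n$ up to a factor $D^{n\rho}$ for an absolute constant $D$. Since $\sum_a(a+1)^{-2\rho}$ is finite exactly when $\rho>1/2$ and blows up as $\rho\downarrow 1/2$, one reads off that $s_{n,B}$ is the root $\rho$ of $B^{2n\rho}\asymp D^{n\rho}\big(\sum_a(a+1)^{-2\rho}\big)^n$; as $B\to\infty$ the $B^{2n\rho}$ term forces $\rho\to 1/2$ from above, and one always has $s_B\geq 1/2$ since the series diverges below $1/2$ regardless of $B$. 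Making this precise with the convergence exponent gives $\lim_{B\to\infty}s_B=1/2$.

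For continuity on $(1,\infty)$, the cleanest route is to show $B\mapsto s_{n,B}$ is continuous for each fixed $n$ and that the convergence $s_{n,B}\to s_B$ is locally uniform in $B$; continuity of the limit then follows. Fix $n$. When $\mathcal A=\mathbb N$ the function $f_{n,B}(\rho)=B^{-2n\rho}\sum_{a_2,\dots,a_{2n}}q_{2n}^{-2\rho}$ is jointly continuous in $(B,\rho)$ on $\{B>1,\ \rho>1/2\}$ (the sum converges locally uniformly there by the estimate above), strictly decreasing in $\rho$, and $s_{n,B}$ is the unique $\rho$ with $f_{n,B}(\rho)=1$ (it lies in $(1/2,1]$ by Remark \ref{rem1} and the divergence at $1/2$), so the implicit function theorem — or simply monotonicity plus continuity — yields continuity of $B\mapsto s_{n,B}$. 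For the uniformity, I would revisit the proof of Lemma \ref{lem3}: the key inequality obtained there, $f_{p,B}(s_{rp,B}+\varepsilon)<1$ for all $p\geq p_0(\varepsilon)$ and all $r$, gives $|s_{p,B}-s_B|\leq\varepsilon$ with $p_0$ depending only on $\varepsilon$ and a lower bound for $B$ (through the quantity $(B^{2p}2^{2p})^\varepsilon>4$ used there, which only improves as $B$ grows); hence $s_{p,B}\to s_B$ uniformly on compact subsets of $(1,\infty)$, and a uniform limit of continuous functions is continuous.

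For the right limit at $B=1$, I would show $B\mapsto s_B$ is monotone and bounded near $1$. Monotonicity is immediate: for $B<B'$ we have $f_{n,B'}(\rho)\leq f_{n,B}(\rho)$ for every $\rho\geq 0$, hence $s_{n,B'}\leq s_{n,B}$, and passing to the limit in $n$ gives $s_{B'}\leq s_B$; combined with $s_B\geq 1/2$ from the first part, $s_B$ is decreasing and bounded below on $(1,\infty)$, so $\lim_{B\downarrow 1}s_B$ exists and we name it $s_1$.

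The main obstacle I anticipate is the uniformity of the convergence $s_{n,B}\to s_B$ as $B$ ranges over a compact subinterval of $(1,\infty)$ — everything else is either an explicit estimate on $q_{2n}$ or soft monotonicity. The delicate point is that the argument in Lemma \ref{lem3} chooses $p$ large \emph{depending on $B$} through the condition $(B^{2p}2^{2p})^\varepsilon>4$; one must check that this $p$ can be chosen uniformly once $B$ is bounded away from $1$ (which it can, since $B^{2p}2^{2p}\geq 2^{2p}$ already), and separately verify that the other ingredients — the block sub/super-multiplicativity constants and the comparison $q_{2(n+k)}\leq q_{2n}q_{2k}$ — carry no hidden $B$-dependence. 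Once that bookkeeping is done, the three assertions assemble immediately.
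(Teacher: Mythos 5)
Your proposal reaches the same conclusions by substantially the same estimates, but the continuity argument is organized differently from the paper's. The paper avoids discussing uniform convergence altogether: it shows directly that for $1<B_0<B<B_0^{1+\varepsilon}$ one has $\sum(B_0^{2n}q_{2n}^2)^{-(s_{n,B}+\varepsilon)}\leq(B/B_0^{1+\varepsilon})^{2n}<1$, hence $0\leq s_{n,B_0}-s_{n,B}\leq\varepsilon$ with a modulus that is \emph{uniform in $n$}; this equicontinuity of the family $\{s_{n,\cdot}\}_n$ together with pointwise convergence gives continuity of $s_B$ immediately. Your route --- continuity of each $s_{n,\cdot}$ plus locally uniform convergence $s_{n,B}\to s_B$, obtained by tracking the $B$-dependence of $p_0$ in the proofs of Lemmas \ref{lem3} and \ref{editlem} --- also works, and your observation that $(B^{2p}2^{2p})^{\varepsilon}>4$ can be secured uniformly for $B>1$ because $B^{2p}2^{2p}\geq 2^{2p}$ is exactly the right point; it is just a slightly longer way around the same estimate. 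The paper's version is a bit cleaner since it needs only one inequality rather than two separate facts.

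One slip to flag: the claimed lower bound $q_{2n}\geq 2^{n-1}\prod_{k=1}^n a_{2k}$ is false in general (try $a_2=a_4=10$: $q_4=131<2\cdot 100$). Your own recursion step $q_{2k}\geq(a_{2k}+1)q_{2k-2}$ actually yields $q_{2n}\geq\prod_{k=1}^n(a_{2k}+1)$, which is the bound the paper uses (equivalently $q_{2n}\geq\prod a_{2k}$, plus the upper bound $q_{2n}\leq 2^n\prod(a_{2k}+1)$). With this correction the comparability of $f_{n,B}(\rho)$ with $B^{-2n\rho}\bigl(\sum_a(a+1)^{-2\rho}\bigr)^n$ up to a $D^{n\rho}$ factor goes through as you describe, and the rest of the argument for $\lim_{B\to\infty}s_B=1/2$ and for the right limit at $B=1$ matches the paper.
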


\begin{proof}
	By definition, for all $n\geq1$, and $(a_1,a_2,\ldots,a_n)\in\mathbb{N}^n$, we have
	\begin{equation*}
		\prod_{k=1}^n a_k\leq q_n=q_n(a_1,a_2,\ldots,a_n)\leq\prod_{k=1}^n(a_k+1)\quad\text{and}\quad q_n\geq 2^{\frac{n}{2}-1}.
	\end{equation*}
	We first prove $\lim_{B\rightarrow\infty}s_B=1/2$. On the one hand, we notice that
	\begin{equation*}
		\sum_{a_2,\ldots,a_{2n}}\frac{1}{(B^{2n}q_{2n}^2)^{\frac{1}{2}}}\geq\sum_{a_2,\ldots,a_{2n}}\frac{1}{B^n 2^n \prod_{j=1}^{n}(a_{2j}+1)}\geq\left(\frac{1}{2B}\sum_{k=1}^{\infty}\frac{1}{k+1}\right)^n=\infty.
	\end{equation*}
	On the other hand, for an arbitrary $\varepsilon>0$, we take $B$ large enough such that $B^{1+2\varepsilon}>\sum_{k=1}^{\infty}\frac{1}{k^{1+2\varepsilon}}$ such that
	\begin{equation*}
		\sum_{a_2,\ldots,a_{2n}}\frac{1}{(B^{2n}q_{2n}^2)^{\frac{1}{2}+\varepsilon}}\leq\sum_{a_2,\ldots,a_{2n}}\frac{1}{(B^{2n}\prod_{j=1}^n a_{2j}^2)^{\frac{1}{2}+\varepsilon}}\leq\left(\frac{\sum_{k=1}^{\infty}\frac{1}{k^{1+2\varepsilon}}}{B^{1+2\varepsilon}}\right)^{n}<1.
	\end{equation*}
	Thus $s_{n,B}=1/2$ for all $n\geq1$, which proves $\lim_{B\rightarrow\infty}s_B=1/2$.
	
	Next we prove $s_B$ is continuous with respect to $B\in(1,\infty)$. It suffices to show that $s_{n,B}$ is uniformly continuous with respect to $B$. Notice that $s_{n,B}$ is decreasing with respect to $B$. For any $\varepsilon>0$, when $1<B_0<B<B_0^{1+\varepsilon}$,
	\begin{align*}
		\sum_{a_2,\ldots,a_{2n}}\frac{1}{(B_0^{2n}q_{2n}^2)^{s_{n,B}+\varepsilon}}&\leq\frac{1}{B_0^{2n\varepsilon}}\sum_{a_2,\ldots,a_{2n}}\frac{1}{(B_0^{2n}q_{2n}^2)^{s_{n,B}}} \\
		&\leq\frac{1}{B_0^{2n\varepsilon}}\left(\frac{B}{B_0}\right)^{2ns_{n,B}}\leq\left(\frac{B}{B_0^{1+\varepsilon}}\right)^{2n}<1.
	\end{align*}
	This gives $s_{n,B_0}\leq s_{n,B}+\varepsilon$,
	which shows $s_{n,B}$ is continuous with respect to $B>1$. Obviously the right limit of $s_B$ at $B=1$ exists.
\end{proof}

\section{Proof of Theorem \ref{main1}}\label{3}

We will give a constructive proof by induction. At first, we consider a fixed  real number $x\in[3/2,2)$, for which we present a procedure that gives the partial quotients $a_2,b_2,a_4,b_4,a_6,b_6,\ldots$ successively such that $x=[0;1,a_2,1,a_4,1,a_6,\ldots]+[0;1,b_2,1,b_4,1,b_6,\ldots].$

\begin{construction}[Construction of decomposition in Theorem \ref{main1}]\label{construction1}
	Let $x\in[3/2,2)$. For every $k\in\mathbb{N}$, we choose $a_{2k}$ such that \begin{equation}\label{con1}
		\begin{split}
			[0;1,a_2,1,a_4,\ldots,a_{2k-2},1,a_{2k}]&\leq x-[0;1,b_2,1,b_4,\ldots,b_{2k-2},1]	\\
			&<[0;1,a_2,1,a_4,\ldots,a_{2k-2},1,a_{2k},1],
		\end{split}
	\end{equation}
	and then choose $b_{2k}$ such that \begin{equation}\label{con2}
		\begin{split}
			[0;1,b_2,1,b_4,\ldots,b_{2k-2},1,b_{2k}]&\leq x-[0;1,a_2,1,a_4,\ldots,a_{2k-2},1,a_{2k},1]	\\
			&<[0;1,b_2,1,b_4,\ldots,b_{2k-2},1,b_{2k},1].
		\end{split}
	\end{equation}
\end{construction}

\begin{rem}
	Construction \ref{construction1} gives the decomposition of $x$ as a sum of two continued fractions with infinite expansions. Specially, one might meet the situation that for a certain integer $k$ the inequality ``$\leq$'' in (\ref{con1}) or (\ref{con2}) becomes equality ``$=$'', i.e., if there is an $a_{2k}$ such that $$
	[0;1,a_2,1,a_4,\ldots,a_{2k-2},1,a_{2k}]= x-[0;1,b_2,1,b_4,\ldots,b_{2k-2},1]
	$$ or there is a $b_{2k}$ such that $$
	[0;1,b_2,1,b_4,\ldots,b_{2k-2},1,b_{2k}]= x-[0;1,a_2,1,a_4,\ldots,a_{2k},1].
	$$ Then we can terminate our procedure at that step, obtaining a finite decomposition $$
	x=[0;1,a_2,1,a_4,\ldots,a_{2k-2},1,a_{2k}]+[0;1,b_2,1,b_4,\ldots,b_{2k-2},1]
	$$ or $$
	x=[0;1,b_2,1,b_4,\ldots,b_{2k-2},1,b_{2k}]+[0;1,a_2,1,a_4,\ldots,a_{2k},1].
	$$ Specially, $3/2=[0;1,1]+[0;1]$ and $2=[0;1]+[0;1]$.
\end{rem}

\begin{lem}\label{claim1}
	Let $x\in\left[3/2,2\right)$. \textnormal{(i)} Assume that we have obtained a series of partial quotients $a_2,b_2,a_4,b_4,\ldots,a_{2k-2},b_{2k-2}$ by Construction \ref{construction1}. Then, there exists an $a_{2k}$ satisfying condition (\ref{con1}) if and only if \begin{equation}\label{cl1.1}
		[0;1,a_2,1,a_4,\ldots,a_{2k-2},1,1]\leq x-[0;1,b_2,1,b_4,\ldots,b_{2k-2},1].
	\end{equation} \textnormal{(ii)} Assume that we have obtained $a_2,b_2,a_4,b_4,\ldots,a_{2k-2},b_{2k-2},a_{2k}$ by Construction \ref{construction1}. Then, there exists a $b_{2k}$ satisfying condition (\ref{con2}) if and only if \begin{equation}\label{cl1.2}
		[0;1,b_2,1,b_4,\ldots,b_{2k-2},1,1]\leq x-[0;1,a_2,1,a_4,\ldots,a_{2k},1].
	\end{equation}
\end{lem}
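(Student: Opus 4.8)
The plan is to prove (i) in full and deduce (ii) by the evident symmetry between the two sequences. For (i), abbreviate $Q\coloneqq[0;1,b_2,1,b_4,\ldots,b_{2k-2},1]$ and $y\coloneqq x-Q$, and for $m\in\mathbb{N}$ set $v_m\coloneqq[0;1,a_2,1,a_4,\ldots,a_{2k-2},1,m]$. Applying the elementary identity $[c_0;c_1,\ldots,c_\ell,1]=[c_0;c_1,\ldots,c_\ell+1]$ to the upper bound in (\ref{con1}), that condition says exactly that some $m=a_{2k}\in\mathbb{N}$ satisfies $v_m\le y<v_{m+1}$.

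First I would record the behaviour of the sequence $(v_m)_{m\ge1}$. Since the position $2k$ is even, the comparison proposition in Section \ref{2} gives $v_1<v_2<v_3<\cdots$; writing $v_m=[0;1,a_2,\ldots,a_{2k-2},1+\tfrac{1}{m}]$ (a finite continued fraction with real last entry) and using that such an expression depends continuously on its last entry, $v_m$ increases to $L\coloneqq[0;1,a_2,1,\ldots,a_{2k-2},1]$, while the last clause of that proposition (applied with the odd index $2k-1$) gives $v_m<L$ for every $m$. From this it is immediate that an $m$ with $v_m\le y<v_{m+1}$ exists if and only if $v_1\le y<L$: when $y$ lies in that interval one may take $a_{2k}=\max\{m:v_m\le y\}$, which is $\ge1$ because $v_1\le y$ and finite because $v_m\to L>y$; conversely any valid $m$ forces $y\ge v_1$ and $y<v_{m+1}<L$.

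The one step requiring care is to see that the upper constraint $y<L$ is automatic, given the running hypothesis that Construction \ref{construction1} has succeeded through step $k-1$. For $k\ge2$, the right-hand inequality of (\ref{con2}) at step $k-1$ reads $x-[0;1,a_2,\ldots,a_{2k-2},1]<[0;1,b_2,\ldots,b_{2k-2},1]$ once one recognises, via the convention that omitted odd-order entries equal $1$ together with the identity $[\ldots,c,1]=[\ldots,c+1]$, that the strings ``$\ldots,1,a_{2k-2},1$'' and ``$\ldots,1,b_{2k-2},1$'' occurring there name the numbers $L$ and $Q$ respectively; rearranging yields $y=x-Q<L$. For $k=1$ there is no earlier step, and then $L=[0;1]=1$, $v_1=[0;1,1]=1/2$, $Q=1$, so $v_1\le y<L$ is precisely the hypothesis $x\in[3/2,2)$. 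Hence an $a_{2k}$ satisfying (\ref{con1}) exists if and only if $v_1\le y$, which is exactly (\ref{cl1.1}), proving (i).

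For (ii) the same computation goes through with $a$ and $b$ interchanged: put $z\coloneqq x-[0;1,a_2,\ldots,a_{2k},1]$ and $w_m\coloneqq[0;1,b_2,\ldots,b_{2k-2},1,m]$, so that (\ref{con2}) asks for some $m=b_{2k}$ with $w_m\le z<w_{m+1}$, and argue as before; the only change is that the free upper bound $z<\sup_m w_m=[0;1,b_2,\ldots,b_{2k-2},1]$ now comes from the right-hand inequality of (\ref{con1}) at step $k$ itself, which holds because $a_{2k}$ was produced by the Construction. I expect the main obstacle to be purely organisational: keeping straight which continued-fraction strings at consecutive levels denote the same real number, and carrying out the rearrangements of (\ref{con1})--(\ref{con2}) cleanly so that the automatic bound $y<L$ (resp.\ $z<\sup_m w_m$) drops out; the monotonicity and limiting facts about $(v_m)$ are routine consequences of the material already recorded in Section \ref{2}.
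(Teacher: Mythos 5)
Your argument is correct and follows essentially the same route as the paper: rewrite (\ref{con1}) (resp.\ (\ref{con2})) via the identity $[\ldots,c,1]=[\ldots,c+1]$ so that it asks for $v_m\le y<v_{m+1}$, observe that $(v_m)$ increases strictly to $L$, and note that the needed upper bound $y<L$ (resp.\ $z<\sup_m w_m$) is supplied by the preceding step's condition. The one place you are a bit more careful than the paper is in treating $k=1$ separately and in spelling out precisely which inequality from the previous step furnishes the free upper bound in each of (i) and (ii); the paper leaves that to the reader.
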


\begin{proof}
	Condition (\ref{con1}) can be rewritten as \begin{equation}\label{con3}
		\begin{split}
			[0;1,a_2,1,a_4,\ldots,a_{2k-2},1,a_{2k}]&\leq x-[0;1,b_2,1,b_4,\ldots,b_{2k-2},1]	\\
			&<[0;1,a_2,1,a_4,\ldots,a_{2k}+1].
		\end{split}
	\end{equation} Condition (\ref{con2}) can be rewritten as \begin{equation}\label{con4}
		\begin{split}
			[0;1,b_2,1,b_4,\ldots,b_{2k-2},1,b_{2k}]&\leq x-[0;1,a_2,1,a_4,\ldots,a_{2k},1]	\\
			&<[0;1,b_2,1,b_4,\ldots,b_{2k}+1].
		\end{split}
	\end{equation}
	
	We first prove claim \textnormal{(i)} of the lemma. Noticing that $a_{2k}\in\mathbb{N}$, the left inequality induces (\ref{cl1.1}), which shows the sufficiency.
	
	For the necessity, we consider the sequence $\{[0;1,a_2,1,a_4,\ldots,a_{2k-2},1,c]\}_{c=1}^{\infty}$, which is strictly increasing with a supremum $[0;1,a_2,1,a_4,\ldots,a_{2k-2},1]$. Thus the subintervals $$
	\big[[0;1,a_2,1,a_4,\ldots,a_{2k-2},1,c],\ [0;1,a_2,1,a_4,\ldots,a_{2k-2},1,c+1]\big)
	$$ give a partition of $\big[[0;1,a_2,1,a_4,\ldots,a_{2k-2},1,1],\ [0;1,a_2,1,a_4,\ldots,a_{2k-2},1]\big)$, denoted by $I_{2k-2}$, i.e., $$
	I_{2k-2}=\bigcup_{c=1}^\infty	\big[[0;1,a_2,1,a_4,\ldots,a_{2k-2},1,c],\ [0;1,a_2,1,a_4,\ldots,a_{2k-2},1,c+1]\big).
	$$ Since we have found $b_{2k-2}$ satisfying condition (\ref{cl1.1}) and (\ref{con4}), we have $$
	[0;1,a_2,\ldots,a_{2k-2},1,1]\leq x-[0;1,b_2,\ldots,b_{2k-2},1]<[0;1,a_2,\ldots,a_{2k-2},1].
	$$ Thus, we can choose a unique $c\in\mathbb{N}$ which is our desired $a_{2k}$ satisfying condition (\ref{con3}).
	
	For the claim \textnormal{(ii)} of the lemma, we apply the same ideas as above. The sufficiency is trivial. For the necessity, we consider the sequence $\{[0;1,b_2,1,b_4,\ldots,b_{2k-2},1,d]\}_{d=1}^{\infty}$, which is strictly increasing with a supremum $[0;1,b_2,1,b_4,\ldots,b_{2k-2},1]$. The partition \begin{equation*}
		\begin{split}
			J_{2k-2}\coloneqq&\big[[0;1,b_2,1,b_4,\ldots,b_{2k-2},1,1],\ [0;1,b_2,1,b_4,\ldots,b_{2k-2},1]\big)	\\
			=&\bigcup_{d=1}^\infty\big[[0;1,b_2,1,b_4,\ldots,b_{2k-2},1,d],\ [0;1,b_2,1,b_4,\ldots,b_{2k-2},1,d+1]\big)
		\end{split}
	\end{equation*} still holds and we also have $x-[0;1,a_2,1,a_4,\ldots,a_{2k},1]\in J_{2k-2}$. Thus we can find a unique $d\in\mathbb{N}$ which is our desired $b_{2k}$ satisfying condition (\ref{con4}).
\end{proof}

\begin{lem}
	Let $x\in\left[3/2,2\right)$. The first step in Construction \ref{construction1} can be done, i.e., both conditions (\ref{con1}) and (\ref{con2}) are satisfied for $k=1$.
\end{lem}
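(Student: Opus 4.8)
The plan is to reduce this base case to one elementary tiling fact. First I would record the explicit values $[0;1]=1$, $[0;1,a]=\tfrac{a}{a+1}$ and $[0;1,a,1]=\tfrac{a+1}{a+2}$ for $a\in\mathbb{N}$, all immediate from the recurrences for $p_k,q_k$ in Section \ref{2}. Since $[0;1,a,1]=\tfrac{(a+1)}{(a+1)+1}=[0;1,a+1]$, the half-open intervals $\big[[0;1,a],[0;1,a,1]\big)$, $a\ge 1$, are consecutive and abut, and their union is $\big[[0;1,1],1\big)=[1/2,1)$. (This is the $k=1$ instance of the partitions $I_{2k-2},J_{2k-2}$ appearing in Lemma \ref{claim1}; I would argue directly rather than invoke that lemma, since for $k=1$ there is no prior data $b_{2k-2}$ to speak of.)

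For $k=1$, condition (\ref{con1}) reads $[0;1,a_2]\le x-[0;1]<[0;1,a_2,1]$, i.e. $\tfrac{a_2}{a_2+1}\le x-1<\tfrac{a_2+1}{a_2+2}$. As $x\in[3/2,2)$ forces $x-1\in[1/2,1)$, the tiling above yields a (unique) $a_2\in\mathbb{N}$ with this property, which establishes (\ref{con1}) for $k=1$.

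Condition (\ref{con2}) for $k=1$ reads $[0;1,b_2]\le x-[0;1,a_2,1]<[0;1,b_2,1]$, so it is enough to show that $y\coloneqq x-\tfrac{a_2+1}{a_2+2}$ again lies in $[1/2,1)$, and then apply the same tiling to produce $b_2$. The right inequality of (\ref{con1}) gives $x<1+\tfrac{a_2+1}{a_2+2}$, hence $y<1$; the left inequality gives $x\ge 1+\tfrac{a_2}{a_2+1}$, hence
\[
y\ \ge\ 1+\frac{a_2}{a_2+1}-\frac{a_2+1}{a_2+2}\ =\ 1-\frac{1}{(a_2+1)(a_2+2)}\ \ge\ 1-\tfrac16\ \ge\ \tfrac12,
\]
using $a_2\ge 1$. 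Thus $y\in[1/2,1)$, a legitimate $b_2\in\mathbb{N}$ exists, and the first step of Construction \ref{construction1} is complete.

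There is essentially no real obstacle here — the lemma is the easy base case of the induction — and the only point needing care is the displayed estimate, which is exactly the computation $\tfrac{a_2+1}{a_2+2}-\tfrac{a_2}{a_2+1}=\tfrac{1}{(a_2+1)(a_2+2)}\le\tfrac16$ guaranteeing that subtracting $[0;1,a_2,1]$ leaves the remainder in $[1/2,1)$. (If one of the ``$\le$'' signs is an equality the conditions still hold verbatim, so no separate case is required; the resulting finite decompositions are those recorded in the Remark.)
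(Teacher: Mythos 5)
Your proposal is correct and close in spirit to the paper's own short proof, but you package it differently. The paper invokes Lemma \ref{claim1} to reduce conditions (\ref{con1}), (\ref{con2}) at $k=1$ to the criteria (\ref{cl1.1}), (\ref{cl1.2}), then observes that (\ref{cl1.1}) is exactly $x\geq 3/2$ and that (\ref{cl1.2}), namely $\tfrac12\leq x-\tfrac{a_2+1}{a_2+2}$, holds for \emph{every} positive integer $a_2$ once $x\geq 3/2$, since $\tfrac12+\tfrac{a_2+1}{a_2+2}<\tfrac32$. You instead re-derive the relevant instance of the tiling of $[1/2,1)$ by the intervals $\big[[0;1,a],[0;1,a+1]\big)$ directly (which is exactly what the partition $I_{2k-2}$, $J_{2k-2}$ in Lemma \ref{claim1} encapsulates at $k=1$), and then use \emph{both} inequalities of (\ref{con1}) to show that the particular remainder $y=x-\tfrac{a_2+1}{a_2+2}$ lies in $[1/2,1)$, obtaining the stronger bound $y\geq 5/6$. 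Your decision to argue directly rather than cite Lemma \ref{claim1} is a sensible hedge: the necessity half of that lemma at $k=1$ has no prior datum $b_{0}$ and tacitly borrows the needed upper bound $x-1<1$ from the domain hypothesis $x<2$ rather than from a previous step; your version makes this explicit. Both routes are valid and reach the same conclusion.
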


\begin{proof}
	If $k=1$, condition (\ref{con1}) is equivalent to (\ref{cl1.1}) with the form $$
	[0;1,1]\leq x-[0;1],
	$$ which is $x\geq3/2$. Condition (\ref{con2}) is equivalent to (\ref{cl1.2}) with the form $$
	[0;1,1]\leq x-[0;1,a_2,1],
	$$ where $a_2$ is a fixed number related to $x$. Actually it holds for all positive integer $a_2$ as long as $x\geq3/2$.
\end{proof}

\begin{lem}
	Let $x\in\left[3/2,2\right)$. The numbers $a_2,b_2,a_4,b_4,\ldots$ can be chosen such that condition (\ref{con1}) and (\ref{con2}) are satisfied for every $k\in\mathbb{N}$.
\end{lem}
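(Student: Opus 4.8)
The plan is to argue by induction on $k$, the case $k=1$ being the previous lemma, carrying along the induction an auxiliary comparison so that the step reproduces itself. Write $q^{a}_{n},q^{b}_{n}$ for the denominators of $[0;1,a_2,\dots,a_n]$ and $[0;1,b_2,\dots,b_n]$ (so $q^{a}_{2j+1}=q^{a}_{2j}+q^{a}_{2j-1}$, and likewise for $b$, since the odd-order partial quotients are $1$). Assume $a_2,b_2,\dots,a_{2k-2},b_{2k-2}$ have been produced by Construction \ref{construction1} with \eqref{con1} and \eqref{con2} holding for all smaller indices. By Lemma \ref{claim1} it is enough to verify \eqref{cl1.1} (whence $a_{2k}$ exists) and then \eqref{cl1.2} (whence $b_{2k}$ exists); the two ``upper'' inequalities Lemma \ref{claim1} tacitly needs are the right inequality of \eqref{con2} at step $k-1$ and the right inequality of \eqref{con1} at step $k$, both already available. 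So the whole problem reduces to establishing \eqref{cl1.1} and \eqref{cl1.2} at every step.

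I would first turn each of these into a comparison of cylinder widths. The left inequality of \eqref{con2} at step $k-1$ reads $[0;1,b_2,\dots,b_{2k-2}]\le x-[0;1,a_2,\dots,a_{2k-2},1]$; combining it with the endpoint identities
\begin{align*}
[0;1,b_2,\dots,b_{2k-2},1]-[0;1,b_2,\dots,b_{2k-2}]&=\abs{I(1,b_2,\dots,b_{2k-2})},\\
[0;1,a_2,\dots,a_{2k-2},1]-[0;1,a_2,\dots,a_{2k-2},1,1]&=\abs{I(1,a_2,\dots,a_{2k-2},1)}
\end{align*}
(both instances of \eqref{pre1} read together with \eqref{pre5}, the signs coming from the comparison proposition) one obtains \eqref{cl1.1}, i.e. $x-[0;1,b_2,\dots,b_{2k-2},1]\ge[0;1,a_2,\dots,a_{2k-2},1,1]$, as soon as $\abs{I(1,b_2,\dots,b_{2k-2})}\le\abs{I(1,a_2,\dots,a_{2k-2},1)}$; call this inequality $(\star_k)$. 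Symmetrically, using the left inequality of \eqref{con1} at step $k$, condition \eqref{cl1.2} follows once $\abs{I(1,a_2,\dots,a_{2k})}\le\abs{I(1,b_2,\dots,b_{2k-2},1)}$; call this $(\star\star_k)$.

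So it remains to prove $(\star_k)$ and $(\star\star_k)$ for every $k$, and the point is that they propagate cyclically, $(\star_k)\Rightarrow(\star\star_k)\Rightarrow(\star_{k+1})$, each implication exploiting the \emph{greedy} character of Construction \ref{construction1}. For $(\star_k)\Rightarrow(\star\star_k)$: since $a_{2k}$ is chosen as large as \eqref{con1} allows while, by the left inequality of \eqref{con2} at step $k-1$ together with $(\star_k)$, the number $x-[0;1,b_2,\dots,b_{2k-2},1]$ lies within $\abs{I(1,b_2,\dots,b_{2k-2})}$ below $[0;1,a_2,\dots,a_{2k-2},1]$ and inside the cylinder $I(1,a_2,\dots,a_{2k-2},1)$, the surplus
\begin{equation*}
[0;1,a_2,\dots,a_{2k-2},1]-[0;1,a_2,\dots,a_{2k},1]=\frac{1}{q^{a}_{2k-1}(q^{a}_{2k}+q^{a}_{2k-1})}
\end{equation*}
cannot exceed $\abs{I(1,b_2,\dots,b_{2k-2})}=1/\bigl(q^{b}_{2k-2}(q^{b}_{2k-2}+q^{b}_{2k-3})\bigr)$; feeding $(\star_k)$ into this forces $q^{a}_{2k}$ to dominate $q^{b}_{2k-2}$, which unwinds to $(\star\star_k)$. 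The implication $(\star\star_k)\Rightarrow(\star_{k+1})$ is identical with the roles of $a$ and $b$ interchanged.

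The step I expect to be the main obstacle is making this last argument quantitatively correct: the naive reading of ``$q^{a}_{2k}$ dominates $q^{b}_{2k-2}$'' gains only a factor $1$, whereas $(\star\star_k)$ needs a factor bounded away from $1$ (since $\abs{I(1,b_2,\dots,b_{2k-2})}$ and $\abs{I(1,b_2,\dots,b_{2k-2},1)}$, while comparable by Lemma \ref{lem1}, are not equal). I would remedy this by carrying the \emph{quantitative} forms $\abs{I(1,a_2,\dots,a_{2k-2},1)}\ge C\,\abs{I(1,b_2,\dots,b_{2k-2})}$ and $\abs{I(1,b_2,\dots,b_{2k-2},1)}\ge C\,\abs{I(1,a_2,\dots,a_{2k})}$ through the induction, for an absolute constant $C>1$ fixed in advance, and checking — via the endpoint formulas above and the denominator estimates of Lemmas \ref{lem1} and \ref{lem2} — that this strengthened pair still closes under one full step of the construction. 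With that in place, \eqref{cl1.1} and \eqref{cl1.2} hold for all $k$, so Construction \ref{construction1} never terminates and \eqref{con1}, \eqref{con2} are satisfied for every $k\in\mathbb{N}$.
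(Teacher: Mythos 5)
Your reduction of \eqref{cl1.1} and \eqref{cl1.2} to the cylinder-width comparisons $(\star_k)$ and $(\star\star_k)$ is correct and genuinely different in structure from the paper's argument (which proceeds by contradiction, supposing the construction first breaks at step $k+1$ and chaining \eqref{a1}--\eqref{a3} through identities \eqref{pre1}--\eqref{pre4} to force all earlier $a_{2j},b_{2j}$ to equal $1$, a contradiction). The problem is that the cyclic propagation $(\star_k)\Rightarrow(\star\star_k)\Rightarrow(\star_{k+1})$ on which your whole induction rests is not only unverified in your write-up --- you explicitly say you would still need to check that the strengthened pair ``closes'' --- but it fails in the most elementary case. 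Writing $q_n$, $s_n$ for the denominators on the $a$- and $b$-sides, the surplus bound you derive from the right inequality of \eqref{con1} at step $k$ together with the left inequality of \eqref{con2} at step $k-1$ is
\begin{equation*}
  q_{2k-1}\,(q_{2k}+q_{2k-1})\;>\;s_{2k-2}\,s_{2k-1},
\end{equation*}
while $(\star\star_k)$ demands $q_{2k}(q_{2k}+q_{2k-1})\ge s_{2k-1}(s_{2k-1}+s_{2k-2})$. Dividing, you would need
\begin{equation*}
  \frac{q_{2k}}{q_{2k-1}}\;\ge\;\frac{s_{2k-1}}{s_{2k-2}}+1,
\end{equation*}
and by \eqref{pre3} the left side equals $[a_{2k};1,a_{2k-2},\dots]\in(a_{2k},a_{2k}+1)$ while the right side equals $2+[0;b_{2k-2},\dots]\in(2,3)$. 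If $a_{2k}=1$ the left side is strictly less than $2$, so the implication cannot go through from these ingredients alone; if $a_{2k}=2$ the two sides overlap. Thus the argument as given collapses exactly when the construction produces a small partial quotient, and inserting an absolute constant $C>1$ does not help --- the shortfall is by a factor comparable to $a_{2k}$, not a fixed constant. You also never establish the base case $(\star_2)$, and since $(\star_k)$ is only a \emph{sufficient} condition for \eqref{cl1.1}, its failure at some step would not even tell you the lemma is false, only that your invariant is too strong. To salvage the approach you would at minimum have to (i) find the correct invariant in the $a_{2k}=1,2$ cases, presumably incorporating $(\star_k)$ itself as an extra lower bound on $s_{2k-2}s_{2k-1}$ and tracking the continuants more finely, and (ii) verify the base case; alternatively, follow the paper's contradiction scheme, which sidesteps the issue by never needing the propagation at all.
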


\begin{proof}
	We will prove the statement by contradiction. We assume that $k+1$ is the smallest value for which the conditions (\ref{con1}) and (\ref{con2}) will be violated. Then, there are two possibilities:\begin{itemize}
		\item[(a)] Condition (\ref{con1}) is violated;
		\item[(b)] Condition (\ref{con1}) is satisfied, while condition (\ref{con2}) is violated.
	\end{itemize}
	\subsubsection*{\textbf{\emph{Case (a).}}} In this case, assume that we can only find $a_2,b_2,a_4,b_4\ldots,a_{2k-2},b_{2k-2}$ at most. Since conditions (\ref{con1}) and (\ref{con2}) are satisfied for $k$ and condition (\ref{con1}) is violated for $k+1$, the values of $a_2,b_2,a_4,b_4\ldots,a_{2k-2},b_{2k-2}$ are chosen such that
	\begin{align}
		[0;1,a_2,\ldots,a_{2k-2}]\leq\  &x-[0;1,b_2,\ldots,b_{2k-4},1]<[0;1,a_2,\ldots,a_{2k-2},1],	\label{a1}\\
		[0;1,b_2,\ldots,b_{2k-2}]\leq\  &x-[0;1,a_2,\ldots,a_{2k-2},1]<[0;1,b_2,\ldots,b_{2k-2},1],	\label{a2}\\
		&x-[0;1,b_2,\ldots,b_{2k-2},1]<[0;1,a_2,\ldots,a_{2k-2},1,1].\label{a3}
	\end{align}
	We denote the convergent of the continued fractions \begin{equation}\label{a4}
		[0;1,a_2,1,a_4,\ldots,a_{2k-2},1,1]\quad\text{and}\quad[0;1,b_2,1,b_4,\ldots,b_{2k-2},1]
	\end{equation}
	as
	\begin{equation*}
		\begin{split}
			[0;a_1,a_2,a_3,a_4,\ldots,a_n]&=\frac{p_n}{q_n},\quad 1\leq n\leq 2k,	\\
			[0;b_1,b_2,b_3,b_4,\ldots,a_n]&=\frac{r_n}{s_n},\quad 1\leq n\leq 2k-1.
		\end{split}
	\end{equation*}
	Respectively, we can rewrite conditions (\ref{a1})-(\ref{a3}) as follows:
	\begin{align*}
		\frac{p_{2k-2}}{q_{2k-2}}+\frac{r_{2k-3}}{s_{2k-3}}\leq\  &x<\frac{p_{2k-1}}{q_{2k-1}}+\frac{r_{2k-3}}{s_{2k-3}},	\\
		\frac{p_{2k-1}}{q_{2k-1}}+\frac{r_{2k-2}}{s_{2k-2}}\leq\  &x<\frac{p_{2k-1}}{q_{2k-1}}+\frac{r_{2k-1}}{s_{2k-1}},	\\
		&x<\frac{p_{2k}}{q_{2k}}+\frac{r_{2k-1}}{s_{2k-1}}.
	\end{align*}
	Combining the inequalities above, we get
	\begin{align*}
		\frac{p_{2k-2}}{q_{2k-2}}+\frac{r_{2k-3}}{s_{2k-3}}\leq x< \frac{p_{2k}}{q_{2k}}+\frac{r_{2k-1}}{s_{2k-1}},	\\
		\frac{p_{2k-1}}{q_{2k-1}}+\frac{r_{2k-2}}{s_{2k-2}}\leq x< \frac{p_{2k}}{q_{2k}}+\frac{r_{2k-1}}{s_{2k-1}}.	\\
	\end{align*}
	Therefore, we have 
	\begin{align*}
		\frac{p_{2k}}{q_{2k}}-\frac{p_{2k-2}}{q_{2k-2}}>\frac{r_{2k-3}}{s_{2k-3}}-\frac{r_{2k-1}}{s_{2k-1}}>0,	\\
		\frac{r_{2k-1}}{s_{2k-1}}-\frac{r_{2k-2}}{s_{2k-2}}>\frac{p_{2k-1}}{q_{2k-1}}-\frac{p_{2k}}{q_{2k}}>0.
	\end{align*}
	Using identities (\ref{pre1}) and (\ref{pre2}), we get 
	\begin{equation}\label{a5}
		\begin{split}
			\frac{a_{2k}}{q_{2k}q_{2k-2}}>\frac{b_{2k-1}}{s_{2k-1}s_{2k-3}},	\\
			\frac{1}{s_{2k-1}s_{2k-2}}>\frac{1}{q_{2k}q_{2k-1}}.
		\end{split}		
	\end{equation}
	By (\ref{a4}) above, we have $a_{2k}=1$ and $b_{2k-1}=1$. Plugging these values into (\ref{a5}), we get
	\begin{align*}
		\frac{1}{q_{2k}q_{2k-2}}>\frac{1}{s_{2k-1}s_{2k-3}},	\\
		\frac{1}{s_{2k-1}s_{2k-2}}>\frac{1}{q_{2k}q_{2k-1}},
	\end{align*} which immediately imply that
	\begin{equation*}
		\frac{q_{2k-1}}{s_{2k-2}}>\frac{s_{2k-1}}{q_{2k}}>\frac{q_{2k-2}}{s_{2k-3}},
	\end{equation*} and then
	\begin{equation}\label{a6}
		\frac{q_{2k-1}}{q_{2k-2}}>\frac{s_{2k-2}}{s_{2k-3}}.
	\end{equation}
	Now by (\ref{pre3}), we rewrite (\ref{a6}) as
	\begin{equation*}
		[a_{2k-1};a_{2k-2},a_{2k-3},\ldots,a_2,a_1]>[b_{2k-2};b_{2k-3},b_{2k-4}\ldots,b_2,b_1].
	\end{equation*}
	Since all the odd-order elements are equal to $1$ due to (\ref{a4}), we have
	\begin{equation*}
		[1;a_{2k-2},1,a_{2k-4},\ldots,a_2,1]>[b_{2k-2};1,b_{2k-4},1,\ldots,b_2,1].
	\end{equation*} It follows that $b_{2k-2}=1,a_{2k-2}=1,\ldots,b_2=1\text{, and }a_2=1$ successively. 
	Then we get a contradiction that the inequality above does not hold, since\begin{equation*}
		[\underset{a_{2k-2}=a_{2k-4}=\cdots=a_2=1}{\underbrace{1;a_{2k-2},1,a_{2k-4},\ldots,a_2,1}}]<[\underset{b_{2k-2}=b_{2k-4}=\cdots=b_2=1}{\underbrace{b_{2k-2};1,b_{2k-4},1,\ldots,b_2,1}}].
	\end{equation*} To sum up, case (a) is impossible.
	\subsubsection*{\textbf{\emph{Case (b).}}} In this case, we apply the same ideas as above. Assume that we can only find $a_2,b_2,a_4,b_4\ldots,a_{2k-2},b_{2k-2},a_{2k}$ at most. Since condition (\ref{con1}) is satisfied until step $k+1$ and condition (\ref{con2}) is satisfied for at most $k$, the values of $a_2,b_2,a_4,b_4\ldots,a_{2k-2},b_{2k-2},a_{2k}$ are chosen such that
	\begin{align}
		[0;1,a_2,\ldots,a_{2k}]\leq\  &x-[0;1,b_2,\ldots,b_{2k-2},1]<[0;1,a_2,\ldots,a_{2k},1],	\label{b1}\\
		[0;1,b_2,\ldots,b_{2k-2}]\leq\  &x-[0;1,a_2,\ldots,a_{2k-2},1]<[0;1,b_2,\ldots,b_{2k-2},1],	\label{b2}\\
		&x-[0;1,a_2,\ldots,a_{2k},1]<[0;1,b_2,\ldots,b_{2k-2},1,1].\label{b3}
	\end{align}
	Following the same notation for the continued fractions 
	\begin{equation}\label{b4}
		[0;1,a_2,1,a_4,\ldots,a_{2k},1]\quad\text{and}\quad[0;1,b_2,1,b_4,\ldots,b_{2k-2},1,1],
	\end{equation} we can rewrite conditions (\ref{b1})-(\ref{b3}) as follows:
	\begin{align*}
		\frac{p_{2k}}{q_{2k}}+\frac{r_{2k-1}}{s_{2k-1}}\leq\  &x<\frac{p_{2k+1}}{q_{2k+1}}+\frac{r_{2k-1}}{s_{2k-1}},	\\
		\frac{p_{2k-1}}{q_{2k-1}}+\frac{r_{2k-2}}{s_{2k-2}}\leq\  &x<\frac{p_{2k-1}}{q_{2k-1}}+\frac{r_{2k-1}}{s_{2k-1}},	\\
		&x<\frac{p_{2k+1}}{q_{2k+1}}+\frac{r_{2k}}{s_{2k}}.
	\end{align*}
	Combining the inequalities above, we get
	\begin{align*}
		\frac{p_{2k}}{q_{2k}}+\frac{r_{2k-1}}{s_{2k-1}}\leq x< \frac{p_{2k+1}}{q_{2k+1}}+\frac{r_{2k}}{s_{2k}},	\\
		\frac{p_{2k-1}}{q_{2k-1}}+\frac{r_{2k-2}}{s_{2k-2}}\leq x< \frac{p_{2k+1}}{q_{2k+1}}+\frac{r_{2k}}{s_{2k}}.	\\
	\end{align*}
	Therefore, we have 
	\begin{align*}
		\frac{p_{2k+1}}{q_{2k+1}}-\frac{p_{2k}}{q_{2k}}>\ &\frac{r_{2k-1}}{s_{2k-1}}-\frac{r_{2k}}{s_{2k}}>0,	\\
		\frac{r_{2k}}{s_{2k}}-\frac{r_{2k-2}}{s_{2k-2}}>\ &\frac{p_{2k-1}}{q_{2k-1}}-\frac{p_{2k+1}}{q_{2k+1}}>0.
	\end{align*}
	Using identities (\ref{pre1}) and (\ref{pre2}), we get 
	\begin{equation}\label{b5}
		\begin{split}
			\frac{1}{q_{2k+1}q_{2k}}>\frac{1}{s_{2k-1}s_{2k-3}},	\\
			\frac{b_{2k}}{s_{2k}s_{2k-2}}>\frac{a_{2k+1}}{q_{2k+1}q_{2k-1}}.
		\end{split}		
	\end{equation}
	By (\ref{b4}) above, we have $a_{2k+1}=1$ and $b_{2k}=1$. Plugging these values into (\ref{b5}), we get
	\begin{align*}
		\frac{1}{q_{2k+1}q_{2k}}>\frac{1}{s_{2k-1}s_{2k-3}},	\\
		\frac{1}{s_{2k}s_{2k-2}}>\frac{1}{q_{2k+1}q_{2k-1}},
	\end{align*} which immediately imply that
	\begin{equation*}
		\frac{q_{2k-1}}{s_{2k-2}}>\frac{s_{2k}}{q_{2k+1}}>\frac{q_{2k}}{s_{2k-1}},
	\end{equation*} and then
	\begin{equation}\label{b6}
		\frac{s_{2k-1}}{s_{2k-2}}>\frac{q_{2k}}{q_{2k-1}}.
	\end{equation}
	Now by (\ref{pre3}), we rewrite (\ref{b6}) as
	\begin{equation*}
		[b_{2k-1};b_{2k-2},b_{2k-3},\ldots,b_2,b_1]>[a_{2k};a_{2k-1},a_{2k-2}\ldots,a_2,a_1].
	\end{equation*} Since all the odd-order elements are equal to $1$ due to (\ref{b4}), we have 
	\begin{equation*}
		[1;b_{2k-2},1,\ldots,b_2,1]>[a_{2k};1,a_{2k-2}\ldots,a_2,1],
	\end{equation*} which follows $a_{2k}=1,b_{2k-2}=1,a_{2k-2}=1,\ldots,b_2=1\text{, and }a_2=1$ successively. Then we get the contradiction that the inequality above does not hold, since 
	\begin{equation*}
		[\underset{b_{2k-2}=b_{2k-4}=\cdots=b_2=1}{\underbrace{1;b_{2k-2},1,\ldots,b_2,1}}]<[\underset{a_{2k}=a_{2k-2}=\cdots=a_2=1}{\underbrace{a_{2k};1,a_{2k-2}\ldots,a_2,1}}].
	\end{equation*} To sum up, case (b) is impossible as well.
\end{proof}

\begin{prop}\label{sum}
	Every real number $x\in[3/2,2)$ can be written as a sum of two continued fractions whose odd-order partial quotients are equal to $1$, i.e.,\begin{equation}
		x=[0;1,a_2,1,a_4,\ldots]+[0;1,b_2,1,b_4,\ldots].
	\end{equation}
\end{prop}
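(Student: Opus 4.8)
The plan is to invoke the three preceding lemmas to see that Construction~\ref{construction1} is well defined for every $x\in[3/2,2)$, and then to pass to the limit in~(\ref{con1}) to conclude that $x$ equals the sum of the two continued fractions the construction produces. The substantive work is already finished; what remains is bookkeeping and a squeeze argument.

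First I would dispose of the terminating case: if at some step an equality ``$=$'' occurs in~(\ref{con1}) or~(\ref{con2}), the procedure halts and one reads off directly a finite decomposition of $x$ of the required form, as described in the Remark following Construction~\ref{construction1}. So assume from now on that the inequalities stay strict. Then the lemma guaranteeing that the first step can be performed, together with the lemma showing that no step can be violated, implies that Construction~\ref{construction1} yields, for every $k\in\mathbb{N}$, unique positive integers $a_{2k},b_{2k}$ for which~(\ref{con1}) and~(\ref{con2}) hold.

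Next I would set
\[
  a:=[0;1,a_2,1,a_4,1,a_6,\ldots],\qquad b:=[0;1,b_2,1,b_4,1,b_6,\ldots],
\]
which are well-defined real numbers because all partial quotients are positive integers, and denote by $p_n/q_n$ and $r_n/s_n$ the $n$-th convergents of $a$ and $b$. Counting indices, the left- and right-hand sides of~(\ref{con1}) are exactly $p_{2k}/q_{2k}+r_{2k-1}/s_{2k-1}$ and $p_{2k+1}/q_{2k+1}+r_{2k-1}/s_{2k-1}$, so~(\ref{con1}) reads
\[
  \frac{p_{2k}}{q_{2k}}+\frac{r_{2k-1}}{s_{2k-1}}\ \le\ x\ <\ \frac{p_{2k+1}}{q_{2k+1}}+\frac{r_{2k-1}}{s_{2k-1}}.
\]
Since even-order convergents increase to the value of a continued fraction and odd-order convergents decrease to it, $p_{2k}/q_{2k}\to a$, $p_{2k+1}/q_{2k+1}\to a$ and $r_{2k-1}/s_{2k-1}\to b$ as $k\to\infty$; hence both bounds in the last display tend to $a+b$, and the squeeze forces $x=a+b$, which is the assertion. (Rewriting~(\ref{con2}) the same way gives the same conclusion, so either inequality chain suffices for the limit; the role of the other was to keep the construction from getting stuck.)

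I do not expect a genuine obstacle here: the only points to watch are matching the cylinder endpoints appearing in~(\ref{con1})--(\ref{con2}) with the correct convergents $p_n/q_n$ and $r_n/s_n$ (a matter of counting indices) and the minor bookkeeping of the terminating case. The real difficulty of this part of the paper was already absorbed into the preceding lemma, where one must rule out the procedure ever failing.
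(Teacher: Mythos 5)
Your proposal matches the paper's own proof: both pass to the limit in~(\ref{con1}) to squeeze $x$ between quantities that converge to the sum of the two infinite continued fractions, with the preceding lemmas having done the real work of showing the construction never stalls. The only cosmetic differences are that you spell out the identification of cylinder endpoints with convergents $p_{2k}/q_{2k}$, $p_{2k+1}/q_{2k+1}$, $r_{2k-1}/s_{2k-1}$ (which is correct) and that you explicitly handle the terminating case, which the paper delegates to the remark after Construction~\ref{construction1}.
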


\begin{proof}
	Suppose we have chosen values $(a_{2k})_{k=1}^\infty$ and $(b_{2k})_{k=1}^\infty$ such that for every $k\in\mathbb{N}$, condition (\ref{con1}) holds true, i.e., \begin{equation}\label{s1}
		\begin{split}
			[0;1,a_2,1,a_4,\ldots,a_{2k-2},1,a_{2k}]+[0;1,b_2,1,b_4,\ldots,b_{2k-2},1]\leq x	\\
			<[0;1,a_2,1,a_4,\ldots,a_{2k-2},1,a_{2k},1]+[0;1,b_2,1,b_4,\ldots,b_{2k-2},1].
		\end{split}
	\end{equation} Each of the two continued fractions
	\begin{equation*}
		\alpha=[0;1,a_2,1,a_4,\ldots],	\quad\beta=[0;1,b_2,1,b_4,\ldots],
	\end{equation*} has a finite limit. Consequently, inequality (\ref{s1}) implies 
	\begin{equation*}
		\begin{split}
			\lim_{k\rightarrow\infty}[0;1,a_2,1,a_4,\ldots,a_{2k-2},1,a_{2k}]+[0;1,b_2,1,b_4,\ldots,b_{2k-2},1]\leq x	\\
			\leq\lim_{k\rightarrow\infty}[0;1,a_2,1,a_4,\ldots,a_{2k-2},1,a_{2k},1]+[0;1,b_2,1,b_4,\ldots,b_{2k-2},1],
		\end{split}
	\end{equation*} and so we have $\alpha+\beta\leq x\leq\alpha+\beta$. Hence
	\begin{equation*}
		x=\alpha+\beta=[0;1,a_2,1,a_4,\ldots]+[0;1,b_2,1,b_4,\ldots],
	\end{equation*} as we need.
\end{proof}

As a corollary, we prove our main Theorem \ref{main1}.

\begin{proof}[Proof of Theorem \ref{main1}]
	For every real number $x\in\mathbb{Z}+[1/2,1]$, we can find a unique $m\in\mathbb{Z}$ such that $x-m\in[3/2,2]$. If $x-m=2$, write $x=[m;1]+[0;1]$. Otherwise, for $x-m\in[3/2,2)$, by Proposition \ref{sum}, we can find two continued fractions $\alpha$ and $\beta$ such that 
	\begin{equation*}
		x-m=\alpha+\beta=[0;1,a_2,1,a_4,\ldots]+[0;1,b_2,1,b_4,\ldots].
	\end{equation*} Write 
	\begin{equation*}
		x=[m;1,a_2,1,a_4,\ldots]+[0;1,b_2,1,b_4,\ldots],
	\end{equation*} which is our desired result.
\end{proof}

\section{Proof of Theorem \ref{main2}}\label{4}

Similarly as in the proof of Theorem \ref{main1}, we will give a constructive proof by induction. We start from a positive real number $x\in\mathbb{Z}+[1/2,1]$, for which we present a procedure that gives the partial quotients $a_0,b_0,a_2,b_2,a_4,b_4,\ldots$ successively such that $x=[a_0;1,a_2,1,a_4,1,a_6,\ldots]\cdot[b_0;1,b_2,1,b_4,1,b_6,\ldots]$.

\begin{construction}[Construction of decomposition in Theorem \ref{main2}]\label{construction2}
	Let $x\in\mathbb{Z}+[1/2,1]$ be a positive real number. We set \begin{equation*}
		a_0=[x],\quad b_0=0.
	\end{equation*}
	For every $k\in\mathbb{N}$, we choose $a_{2k}$ such that
	\begin{equation}\label{Con1}
		\begin{split}
			[a_0;1,a_2,1,a_4,\ldots,a_{2k-2},1,a_{2k}]&\leq \frac{x}{[b_0;1,b_2,1,b_4,\ldots,b_{2k-2},1]}	\\
			&<[a_0;1,a_2,1,a_4,\ldots,a_{2k-2},1,a_{2k},1],
		\end{split}
	\end{equation}
	and then choose $b_{2k}$ such that \begin{equation}\label{Con2}
		\begin{split}
			[b_0;1,b_2,1,b_4,\ldots,b_{2k-2},1,b_{2k}]&\leq \frac{x}{[a_0;1,a_2,1,a_4,\ldots,a_{2k-2},1,a_{2k},1]}	\\
			&<[b_0;1,b_2,1,b_4,\ldots,b_{2k-2},1,b_{2k},1].
		\end{split}
	\end{equation}
\end{construction}

\begin{rem}
	Construction \ref{construction2} gives the decomposition of $x$ as a positive of two continued fractions with infinite expansions. Specially, one might meet the situation that for a certain integer $k$ the inequality ``$\leq$'' in (\ref{Con1}) or (\ref{Con2}) becomes equality ``$=$'', i.e., if there is an $a_{2k}$ such that $$
	x=[a_0;1,a_2,1,a_4,\ldots,a_{2k-2},1,a_{2k}]\cdot[b_0;1,b_2,1,b_4,\ldots,b_{2k-2},1]
	$$ or there is a $b_{2k}$ such that $$
	x=[b_0;1,b_2,1,b_4,\ldots,b_{2k-2},1,b_{2k}]\cdot[a_0;1,a_2,1,a_4,\ldots,a_{2k},1].
	$$ Then, we can terminate our procedure at that step, obtaining a finite decomposition $$
	x=[a_0;1,a_2,1,a_4,\ldots,a_{2k-2},1,a_{2k}]\cdot[b_0;1,b_2,1,b_4,\ldots,b_{2k-2},1]
	$$ or $$
	x=[b_0;1,b_2,1,b_4,\ldots,b_{2k-2},1,b_{2k}]\cdot[a_0;1,a_2,1,a_4,\ldots,a_{2k},1].
	$$ Specially, $1/2=[0;1,1]\cdot[0;1]$ and $1=[0;1]\cdot[0;1]$.
\end{rem}

\begin{lem}\label{claim2}
	Let $x\in\mathbb{Z}+[1/2,1)$ be a positive real number. \textnormal{(i)} Assume that we have obtained $a_0,b_0,a_2,b_2,\ldots,a_{2k-2},b_{2k-2}$ by Construction \ref{construction2}. Then there exists an $a_{2k}$ satisfying condition (\ref{Con1}) if and only if \begin{equation}\label{cl2.1}
		[a_0;1,a_2,1,a_4,\ldots,a_{2k-2},1,1]\leq \frac{x}{[b_0;1,b_2,1,b_4,\ldots,b_{2k-2},1]}.
	\end{equation} \textnormal{(ii)} Assume that we have obtained $a_0,b_0,a_2,b_2,\ldots,a_{2k-2},b_{2k-2},a_{2k}$ by Construction \ref{construction2}. Then there exists a $b_{2k}$ satisfying condition (\ref{Con2}) if and only if \begin{equation}\label{cl2.2}
		[b_0;1,b_2,1,b_4,\ldots,b_{2k-2},1,1]\leq \frac{x}{[a_0;1,a_2,1,a_4,\ldots,a_{2k},1]}.
	\end{equation}
\end{lem}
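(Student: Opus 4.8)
The plan is to mimic exactly the proof of Lemma~\ref{claim1}, replacing the additive structure with the multiplicative one. Observe first that for a positive real number $x\in\mathbb{Z}+[1/2,1)$ the relevant continued fractions $[b_0;1,b_2,\ldots,b_{2k-2},1]$ and $[a_0;1,a_2,\ldots,a_{2k},1]$ are positive, so dividing by them is a legitimate and monotone operation; this is the only new ingredient compared to the additive setting, and it is precisely why we insisted on $b_0=0$ (so that the divisor $[b_0;1,b_2,\ldots]$ lies in $(0,1)$ and is never zero) and $a_0=[x]\geq 1$ (so that the divisor $[a_0;1,a_2,\ldots]$ is bounded below).

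For part (i), I would first rewrite condition (\ref{Con1}) in the equivalent form, using the identity $[c_0;c_1,\ldots,c_n,1]=[c_0;c_1,\ldots,c_n+1]$ for the right-hand endpoint, exactly as (\ref{con3}) was obtained from (\ref{con1}):
\begin{equation*}
[a_0;1,a_2,\ldots,a_{2k-2},1,a_{2k}]\leq \frac{x}{[b_0;1,b_2,\ldots,b_{2k-2},1]}<[a_0;1,a_2,\ldots,a_{2k}+1].
\end{equation*}
The sufficiency of (\ref{cl2.1}) is then immediate: since $a_{2k}\in\mathbb{N}$, inequality (\ref{cl2.1}) says the target quotient is at least $[a_0;1,a_2,\ldots,a_{2k-2},1,1]$, i.e.\ it lies to the right of the left endpoint corresponding to $a_{2k}=1$. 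For necessity I would reproduce the partition argument: the sequence $\{[a_0;1,a_2,\ldots,a_{2k-2},1,c]\}_{c=1}^{\infty}$ is strictly increasing (the last index $2k-1$ is odd, so appending a larger $c$ at an even slot increases the value — here one should cite the relevant monotonicity from Proposition~2.4) with supremum $[a_0;1,a_2,\ldots,a_{2k-2},1]$, so the half-open intervals $\big[[a_0;1,a_2,\ldots,1,c],\,[a_0;1,a_2,\ldots,1,c+1]\big)$ tile $I_{2k-2}:=\big[[a_0;1,a_2,\ldots,1,1],\,[a_0;1,a_2,\ldots,1]\big)$. Now (\ref{cl2.1}) places $x/[b_0;1,b_2,\ldots,b_{2k-2},1]$ at or above the left end of $I_{2k-2}$; to see it is strictly below the right end one uses that condition (\ref{Con2}) was achieved at step $k-1$ for $b_{2k-2}$, which by the already-established part (ii) at the previous level gives $x/[a_0;1,\ldots,a_{2k-2},1] \geq [b_0;1,\ldots,b_{2k-2},1,1]$ and dually $x/[b_0;1,\ldots,b_{2k-2},1]<[a_0;1,\ldots,a_{2k-2},1]$. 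Hence the target quotient falls in a unique tile, whose index $c$ is the desired $a_{2k}$.

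Part (ii) is entirely parallel: rewrite (\ref{Con2}) with the endpoint identity, note sufficiency of (\ref{cl2.2}) is trivial, and for necessity partition $J_{2k-2}:=\big[[b_0;1,b_2,\ldots,b_{2k-2},1,1],\,[b_0;1,b_2,\ldots,b_{2k-2},1]\big)$ by the intervals $\big[[b_0;1,\ldots,1,d],\,[b_0;1,\ldots,1,d+1]\big)$, and check that $x/[a_0;1,a_2,\ldots,a_{2k},1]\in J_{2k-2}$: the lower bound is exactly (\ref{cl2.2}), and the upper bound $x/[a_0;1,\ldots,a_{2k},1]<[b_0;1,\ldots,b_{2k-2},1]$ follows from condition (\ref{Con1}) at step $k$ together with $[b_0;1,b_2,\ldots,b_{2k-2},1]$ being the supremum of the $d$-sequence. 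Then a unique $d\in\mathbb{N}$ works and it is our $b_{2k}$.

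The main obstacle — really the only non-cosmetic point — is bookkeeping the direction of all the inequalities once one divides rather than subtracts. Dividing by a positive quantity is monotone, so strict inequalities and half-open intervals are preserved, but one must be careful that the divisors $[b_0;1,b_2,\ldots,b_{2k-2},1]$ and $[a_0;1,a_2,\ldots,a_{2k},1]$ are genuinely positive and (for the supremum/partition arguments) do not degenerate; the hypotheses $b_0=0$, $a_0=[x]\geq1$, and $x\in\mathbb{Z}+[1/2,1)$ guarantee this, and the rest of the argument is a verbatim transcription of the proof of Lemma~\ref{claim1}.
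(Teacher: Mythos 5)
Your proof is correct and follows essentially the same route as the paper's: rewrite (\ref{Con1}) and (\ref{Con2}) using the endpoint identity $[\dots,c,1]=[\dots,c+1]$, note sufficiency is immediate from $a_{2k}\geq 1$, and for necessity use the partition of the half-open interval by the monotone sequence in the last even slot, where the lower bound of the target quotient is the hypothesis and the upper bound comes from the previously satisfied condition at step $k-1$ (resp.\ step $k$). One small slip: for part (i) you invoke ``$x/[a_0;1,\ldots,a_{2k-2},1]\geq[b_0;1,\ldots,b_{2k-2},1,1]$'' as coming from the previous level, but the index on the $b$-side should be $b_{2k-4}$, not $b_{2k-2}$; in any case this inequality is not needed --- the upper bound $x/[b_0;1,\ldots,b_{2k-2},1]<[a_0;1,\ldots,a_{2k-2},1]$ follows directly from the \emph{right-hand} inequality of (\ref{Con2}) at step $k-1$ after clearing denominators, which is exactly what the paper uses.
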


\begin{proof}
	Condition (\ref{Con1}) can be rewrited as \begin{equation}\label{Con3}
		\begin{split}
			[a_0;1,a_2,1,a_4,\ldots,a_{2k-2},1,a_{2k}]&\leq \frac{x}{[b_0;1,b_2,1,b_4,\ldots,b_{2k-2},1]}	\\
			&<[a_0;1,a_2,1,a_4,\ldots,a_{2k}+1].
		\end{split}
	\end{equation} Condition (\ref{Con2}) can be rewrited as \begin{equation}\label{Con4}
		\begin{split}
			[b_0;1,b_2,1,b_4,\ldots,b_{2k-2},1,b_{2k}]&\leq \frac{x}{[a_0;1,a_2,1,a_4,\ldots,a_{2k},1]}	\\
			&<[b_0;1,b_2,1,b_4,\ldots,b_{2k}+1].
		\end{split}
	\end{equation}
	
	We first prove claim \textnormal{(i)} of the lemma. Since $a_{2k}\in\mathbb{N}$, the left inequality induces (\ref{cl2.1}), which shows the sufficiency.
	
	For the necessity, we consider the sequence $\{[a_0;1,a_2,1,a_4,\ldots,a_{2k-2},1,c]\}_{c=1}^{\infty}$, which is strictly increasing with a supremum $[a_0;1,a_2,1,a_4,\ldots,a_{2k-2},1]$. Thus, the subintervals $$
	\big[[a_0;1,a_2,1,a_4,\ldots,a_{2k-2},1,c],\ [a_0;1,a_2,1,a_4,\ldots,a_{2k-2},1,c+1]\big)
	$$ give a partition of $\big[[a_0;1,a_2,1,a_4,\ldots,a_{2k-2},1,1],\ [a_0;1,a_2,1,a_4,\ldots,a_{2k-2},1]\big)$, denoted by $I_{2k-2}$, i.e., $$
	I_{2k-2}=\bigcup_{c=1}^\infty	\big[[a_0;1,a_2,1,a_4,\ldots,a_{2k-2},1,c],\ [a_0;1,a_2,1,a_4,\ldots,a_{2k-2},1,c+1]\big).
	$$ Since we have found $b_{2k-2}$ satisfying condition (\ref{cl2.1}) and (\ref{Con4}), we have $$
	[a_0;1,a_2,\ldots,a_{2k-2},1,1]\leq \frac{x}{[b_0;1,b_2,\ldots,b_{2k-2},1]}<[a_0;1,a_2,\ldots,a_{2k-2},1].
	$$ Thus, we can choose a unique $c\in\mathbb{N}$ which is our desired $a_{2k}$ satisfying condition (\ref{Con3}).
	
	For claim \textnormal{(ii)} of the lemma, we apply the same ideas as above. The sufficiency of the claim is trivial. For the necessity, we consider the sequence of continued fractions $\{[b_0;1,b_2,1,b_4,\ldots,b_{2k-2},1,d]\}_{d=1}^{\infty}$, which is strictly increasing with a supremum $[b_0;1,b_2,1,b_4,\ldots,b_{2k-2},1]$. The partition \begin{equation*}
		\begin{split}
			J_{2k-2}\coloneqq&\big[[b_0;1,b_2,1,b_4,\ldots,b_{2k-2},1,1],\ [b_0;1,b_2,1,b_4,\ldots,b_{2k-2},1]\big)	\\
			=&\bigcup_{d=1}^\infty\big[[b_0;1,b_2,1,b_4,\ldots,b_{2k-2},1,d],\ [b_0;1,b_2,1,b_4,\ldots,b_{2k-2},1,d+1]\big)
		\end{split}
	\end{equation*} still holds and we also have ${x}/{[a_0;1,a_2,1,a_4,\ldots,a_{2k},1]}\in J_{2k-2}$. Thus we can find a unique $d\in\mathbb{N}$ which is our desired $b_{2k}$ satisfying condition (\ref{Con4}).
\end{proof}

\begin{lem}
	Let $x\in\mathbb{Z}+[1/2,1)$ be a positive real number. The first step in our construction \ref{construction2} can be done, i.e., both condition (\ref{Con1}) and (\ref{Con2}) are satisfied for $k=0$ and $k=1$.
\end{lem}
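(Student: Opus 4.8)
The plan is to separate the $k=0$ step, which is merely the initialization, from the $k=1$ step, which carries all the content and follows at once from Lemma~\ref{claim2}. For $k=0$ one sets $a_0=[x]$ and $b_0=0$; these are legitimate choices since $x>0$, and reading conditions~\eqref{Con1},~\eqref{Con2} at $k=0$ (the inner blocks are empty and $[b_0;1]=[0;1]=1$) reduces them to $a_0\le x<a_0+1$, which is nothing but the definition $a_0=[x]$. So there is nothing to prove in that step beyond recording the choices.

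For the $k=1$ step I first produce $a_2$. Since $a_0$ and $b_0$ have been obtained from Construction~\ref{construction2}, Lemma~\ref{claim2}(i) applies with $k=1$ and gives that an $a_2$ fulfilling~\eqref{Con1} exists if and only if $[a_0;1,1]\le x/[b_0;1]$. Now $[b_0;1]=1$ and $[a_0;1,1]=a_0+\tfrac12$, so this condition reads exactly $a_0+\tfrac12\le x$; and that holds by hypothesis, because $a_0=[x]$ while $x\in\mathbb{Z}+[1/2,1)$ forces $x-[x]\ge\tfrac12$. (Equivalently, one may verify $[a_0;1,1]\le x<[a_0;1]$ by hand and then run the partition argument from the proof of Lemma~\ref{claim2}.)

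With such an $a_2$ fixed, Lemma~\ref{claim2}(ii) with $k=1$ says a $b_2$ fulfilling~\eqref{Con2} exists if and only if $[b_0;1,1]\le x/[a_0;1,a_2,1]$, that is, since $[b_0;1,1]=[0;1,1]=\tfrac12$, if and only if $[a_0;1,a_2,1]\le 2x$. To check this I would invoke the comparison Proposition of Section~\ref{2}: a nontrivial continuation of the order-one (hence odd-order) truncation $[a_0;1]$ lies strictly below it, so $[a_0;1,a_2,1]<[a_0;1]=a_0+1$. It then suffices to see $a_0+1\le 2x$: if $x\ge 1$ then $a_0=[x]\ge 1$, whence $a_0+1\le 2a_0\le 2x$; if $x<1$ then $x\in[1/2,1)$ and $a_0=0$, so $a_0+1=1\le 2x$. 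Either way $[a_0;1,a_2,1]<a_0+1\le 2x$, which is what was needed, and the $k=1$ step is complete.

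I do not expect a genuine obstacle, as this is only a base-case verification; the two places calling for a little care are the parity bookkeeping in the comparison Proposition (it is precisely because $[a_0;1]$ is of odd order that its continuation drops below it) and the borderline values. The case $x\in[1/2,1)$ with $a_0=0$ is covered by the second alternative above, while the degenerate $x=1/2$ (and $x=1$, which is not even in the hypothesis) are accounted for by the explicit factorizations $1/2=[0;1,1]\cdot[0;1]$ and $1=[0;1]\cdot[0;1]$ recorded after Construction~\ref{construction2}; and if the procedure happens to hit an equality already at this first step, it simply terminates with a finite decomposition, which is still consistent with the statement.
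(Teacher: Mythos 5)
Your proof is correct, and for the substantive step ($k=1$, condition \eqref{Con2}) you take a genuinely different route from the paper. The $k=0$ verification and the \eqref{Con1}-at-$k=1$ verification coincide with the paper's (both reduce to $a_0+\tfrac12\le x$, i.e.\ $x\in\mathbb{Z}+[1/2,1)$). For \eqref{Con2} at $k=1$, the paper re-uses the lower bound $x\ge[a_0;1,a_2]$ obtained from \eqref{Con1} and then verifies the single algebraic identity
\[
[a_0;1,a_2]\cdot[b_0;1]-[a_0;1,a_2,1]\cdot[b_0;1,1]=\frac12\left(a_0+\frac{a_2^2+2a_2-1}{(a_2+1)(a_2+2)}\right)>0,
\]
so that $x\ge[a_0;1,a_2]>\tfrac12[a_0;1,a_2,1]$. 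You instead invoke the comparison Proposition of Section~\ref{2}: since $[a_0;1]$ is a truncation of odd length, its continuation $[a_0;1,a_2,1]$ lies strictly below it, giving $[a_0;1,a_2,1]<a_0+1$; then you check $a_0+1\le 2x$ directly from the hypothesis $x\in\mathbb{Z}+[1/2,1)$ by the case split $a_0\ge1$ (so $a_0+1\le 2a_0\le 2x$) versus $a_0=0$ (so $x\in[1/2,1)$ and $a_0+1=1\le 2x$). Both arguments are valid and prove the required strict inequality $[a_0;1,a_2,1]<2x$. Yours is a little more conceptual — it replaces the explicit polynomial computation by the monotonicity/parity fact and a two-case arithmetic check, and it does not need the previously established bound $x\ge[a_0;1,a_2]$; the paper's identity is sharper (it bounds $[a_0;1,a_2,1]$ by $2[a_0;1,a_2]$ rather than by $2(a_0+1)$) but that extra precision is not used. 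One small imprecision to note: at $k=0$ the paper treats \eqref{Con1} as vacuous and reads \eqref{Con2} as $[b_0]\le x/[a_0;1]<[b_0;1]$, i.e.\ $0\le x/([x]+1)<1$, whereas you fold both into the single statement $a_0\le x<a_0+1$; either reading is harmless since both are immediate from $a_0=[x]$, $b_0=0$, $x>0$.
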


\begin{proof}
	If $k=0$, condition (\ref{Con1}) is trivial. Condition (\ref{Con2}) is
	\begin{equation}\label{e1}
		[b_0]\leq\frac{x}{[a_0;1]}<[b_0;1].
	\end{equation} Plugging the values $a_0=[x]$ and $b_0=0$ into \ref{e1}, we have $0\leq x/([x]+1)<1$.
	
	If $k=1$, then condition (\ref{Con1}) is equivalent to (\ref{cl2.1}) with the form 
	\begin{equation*}
		[a_0;1,1]\leq\frac{x}{[b_0;1]}.
	\end{equation*} Plugging the values $a_0=[x]$ and $b_0=0$ into it, we have $[x]+1/2\leq x$, which is $x\in\mathbb{Z}+[1/2,1)$. Condition (\ref{Con2}) is equivalent to (\ref{cl2.2}) with the form
	\begin{equation*}
		[b_0;1,1]\leq\frac{x}{[a_0;1,a_2,1]}.
	\end{equation*} Note that $a_2\in\mathbb{N}$ satisfies condition (\ref{Con1}) when $k=1$, i.e.,
	\begin{equation*}
		[a_0;1,a_2]\leq\frac{x}{[b_0;1]}<[a_0;1,a_2,1],
	\end{equation*} and observe that 
	\begin{equation*}
		[a_0;1,a_2]\cdot[b_0;1]-[a_0;1,a_2,1]\cdot[b_0;1,1]=\frac{1}{2}\left(a_0+\frac{a_2^2+2a_2-1}{(a_2+1)(a_2+2)}\right)>0,
	\end{equation*} which is equivalent to 
	\begin{equation*}
		x\geq[a_0;1,a_2]\cdot[b_0;1]>[a_0;1,a_2,1]\cdot[b_0;1,1].
	\end{equation*} Our desired result follows.
\end{proof}

\begin{lem}
	Let $x\in\mathbb{Z}+[1/2,1)$ be a positive real number. The numbers $a_0,b_0,a_2,b_2,\ldots$ can be chosen such that conditions (\ref{Con1}) and (\ref{Con2}) are satisfied for every $k\in\mathbb{N}$.
\end{lem}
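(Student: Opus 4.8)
The plan is to run the contradiction argument of the last lemma of Section~\ref{3} essentially verbatim, replacing the additive splitting $x-[\cdots]$ by the multiplicative splitting $x/[\cdots]$ and using Lemma~\ref{claim2} in place of Lemma~\ref{claim1}. Suppose the construction gets stuck, and let $k$ be such that (\ref{Con1}) and (\ref{Con2}) hold through the steps fixing $a_{2k-2}$ and $b_{2k-2}$ but fail one step later. There are two cases: (a)~condition~(\ref{Con1}) fails when one tries to choose $a_{2k}$; (b)~$a_{2k}$ can be chosen (so~(\ref{Con1}) holds), but condition~(\ref{Con2}) fails when one tries to choose $b_{2k}$. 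In case~(a) one records that (\ref{Con1}) holds at the step fixing $a_{2k-2}$, that (\ref{Con2}) holds at the step fixing $b_{2k-2}$, and --- by Lemma~\ref{claim2}(i) --- that (\ref{cl2.1}) fails, i.e.
\[
[a_0;1,a_2,\dots,a_{2k-2},1,1]>\frac{x}{[b_0;1,b_2,\dots,b_{2k-2},1]}.
\]
In case~(b) one records that (\ref{Con1}) holds at the step fixing $a_{2k}$, that (\ref{Con2}) holds at the step fixing $b_{2k-2}$, and --- by Lemma~\ref{claim2}(ii) --- that (\ref{cl2.2}) fails.

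The heart of the matter is the reduction that follows. Write $p_n/q_n$ and $r_n/s_n$ for the convergents of $[a_0;1,a_2,1,a_4,\dots]$ and $[b_0;1,b_2,1,b_4,\dots]$, truncated at the orders occurring above, and express each recorded inequality through these convergents; because the splitting is multiplicative, each one relates a product $\frac{p_i}{q_i}\cdot\frac{r_j}{s_j}$ to $x$. Eliminating $x$ between the appropriate pairs gives two strict inequalities between such products. Clearing denominators and invoking (\ref{pre1}), (\ref{pre2}) (in the guise $p_nq_{n-1}-p_{n-1}q_n=\pm1$ and $p_nq_{n-2}-p_{n-2}q_n=\pm a_n$, and likewise for $r,s$) together with $a_{2k}=1$, $b_{2k-1}=1$ in case~(a) (resp.\ $a_{2k+1}=1$, $b_{2k}=1$ in case~(b)), one cancels the common higher-order products and, sandwiching through a common middle term exactly as in Section~\ref{3}, arrives at a single inequality between a ratio of two consecutive convergents of $[a_0;1,a_2,\dots]$ and a ratio of two consecutive convergents of $[b_0;1,b_2,\dots]$. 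Finally, applying the reversal identities (\ref{pre3}) and (\ref{pre4}) --- the latter being the new ingredient compared with Section~\ref{3}, needed because a ratio of numerators, not only of denominators, can now appear --- converts this into a strict comparison of two continued fractions all of whose odd-order partial quotients are $1$, leaving only their even-order partial quotients free (apart from a possible trailing entry created by (\ref{pre4})).

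Reading that comparison from the front, as in Section~\ref{3}, forces the leading free even-order partial quotient on one side to be $1$, then the next, and so on, until all free even-order partial quotients on both sides equal $1$. At that point the two sides are successive convergents $[1;1,\dots,1]$ of $(1+\sqrt{5})/2$, and, once the trailing entry is accounted for, their lengths have opposite parity; since a convergent $[1;1,\dots,1]$ of even length always exceeds one of odd length, the strict inequality obtained above is contradicted. Hence neither case is possible, the construction never gets stuck, and (\ref{Con1}), (\ref{Con2}) can be satisfied for every $k\in\mathbb{N}$.

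The step I expect to be the main obstacle is this algebraic reduction. In the additive setting of Section~\ref{3} a single subtraction of consecutive convergents makes the telescoping identities (\ref{pre1})--(\ref{pre2}) appear at once; here one manipulates products of convergent ratios, so (\ref{pre1}), (\ref{pre2}) and the numerator-reversal identity (\ref{pre4}) only become usable after the substitutions $a_{2k}=1$, $b_{2k-1}=1$ (resp.\ $a_{2k+1}=1$, $b_{2k}=1$) are in place and the correct higher-order terms have cancelled. Tracking which numerators and which denominators to pair up so that the resulting comparison spans the right index range, and controlling the extra entry that (\ref{pre4}) introduces at the tail of the reversed continued fraction so that the parity argument still closes, is the delicate part; case~(b) then calls for a parallel but separate computation.
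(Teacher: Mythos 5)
Your proposal correctly mirrors the paper's two-case contradiction structure, the use of Lemma~\ref{claim2}, and the algebraic route through (\ref{pre1})--(\ref{pre4}) to a comparison of reversed continued fractions; the difficulty you flag (tracking the numerator-reversal) is real. But the endgame is wrong. Because $b_0=0$, identity (\ref{pre4}) makes the reversed continued fraction coming from the $r$'s (numerators of the $\beta$-expansion) \emph{two entries shorter} — it terminates at $b_2$ rather than $b_1$ or $b_0$ — whereas the one from the $q$'s via (\ref{pre3}) runs all the way to $a_1$. In case (a) the resulting inequality is
\[
[1;a_{2k-2},1,a_{2k-4},\ldots,a_4,1,a_2,1]>[b_{2k-2};1,b_{2k-4},1,\ldots,1,b_2],
\]
with lengths $2k-1$ and $2k-3$, the \emph{same} parity. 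Reading from the front forces $b_{2k-2}=a_{2k-2}=\cdots=a_4=b_2=1$, but then the right side runs out of entries and $a_2$ is \emph{not} forced (the paper emphasizes that $a_2$ is a pre-determined constant). Worse, even setting $a_2=1$ the inequality is \emph{satisfied}, not contradicted: by Proposition~2.1, $[1;1,\ldots,1]$ of length $2k-3$ is a truncation of $[1;1,\ldots,1,a_2,1]$ at the even index $2k-4$, hence is strictly smaller, regardless of $a_2$. The same happens in case (b). So the ``parity of all-ones continued fractions'' argument that closes Section~\ref{3} does not close Section~\ref{4}.

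The paper's actual proof therefore needs one more substantive step that your sketch omits. After forcing the interior quotients to $1$, it substitutes these values back into (\ref{A3}) (resp. (\ref{B3})), obtaining a concrete two-sided constraint of the form
\[
a_0+\frac{a_2}{a_2+1}\le x<\iota_{2k-1}\left(a_0+\frac{1}{1+\frac{1}{a_2+\iota_{2k-2}}}\right),
\]
and then shows by a direct algebraic computation (using $\iota_{n}<1$ and $\iota_{n}\iota_{n-1}=1-\iota_{n}$) that the upper bound never exceeds the lower bound, so no $x$ can obey both. This analytic verification — not the Fibonacci-parity comparison — is what yields the contradiction, and it is precisely the new obstacle created by the multiplicative splitting together with $b_0=0$. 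As written, your argument would not close the gap at this point.
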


\begin{proof}
	We will prove the statement by contradiction. We assume that $k+1$ is the smallest value for which conditions (\ref{Con1}) and (\ref{Con2}) will be violated. Then there are two possibilities:\begin{itemize}
		\item[(a)] Condition (\ref{Con1}) is violated;
		\item[(b)] Condition (\ref{Con1}) is satisfied, while the condition (\ref{Con2}) is violated.
	\end{itemize}
	\subsubsection*{\textbf{\emph{Case (a).}}} In this case, assume that we can only find $a_0,b_0,a_2,b_2,a_4,b_4\ldots,a_{2k-2},b_{2k-2}$ at most. Since conditions (\ref{Con1}) and (\ref{Con2}) are satisfied for $k$ and condition (\ref{Con1}) is violated for $k+1$, the values of $a_0,b_0,a_2,b_2,a_4,b_4\ldots,a_{2k-2},b_{2k-2}$ are chosen such that \begin{align}
		[a_0;1,a_2,\ldots,a_{2k-2}]\leq\  &\frac{x}{[b_0;1,b_2,\ldots,b_{2k-4},1]}<[a_0;1,a_2,\ldots,a_{2k-2},1],	\label{A1}\\
		[b_0;1,b_2,\ldots,b_{2k-2}]\leq\  &\frac{x}{[a_0;1,a_2,\ldots,a_{2k-2},1]}<[b_0;1,b_2,\ldots,b_{2k-2},1],	\label{A2}\\
		&\frac{x}{[b_0;1,b_2,\ldots,b_{2k-2},1]}<[a_0;1,a_2,\ldots,a_{2k-2},1,1].\label{A3}
	\end{align}
	We denote the convergents of the continued fractions \begin{equation}\label{A4}
		[a_0;1,a_2,1,a_4,\ldots,a_{2k-2},1,1]\quad\text{and}\quad[b_0;1,b_2,1,b_4,\ldots,b_{2k-2},1]
	\end{equation}
	as
	\begin{equation*}
		\begin{split}
			[a_0;a_1,a_2,a_3,a_4,\ldots,a_n]&=\frac{p_n}{q_n},\quad 1\leq n\leq 2k,	\\
			[b_0;b_1,b_2,b_3,b_4,\ldots,a_n]&=\frac{r_n}{s_n},\quad 1\leq n\leq 2k-1.
		\end{split}
	\end{equation*}
	Respectively, we can rewrite conditions (\ref{A1})-(\ref{A3}) as follows:
	\begin{align*}
		\frac{p_{2k-2}}{q_{2k-2}}\cdot\frac{r_{2k-3}}{s_{2k-3}}\leq\  &x<\frac{p_{2k-1}}{q_{2k-1}}\cdot\frac{r_{2k-3}}{s_{2k-3}},	\\
		\frac{p_{2k-1}}{q_{2k-1}}\cdot\frac{r_{2k-2}}{s_{2k-2}}\leq\  &x<\frac{p_{2k-1}}{q_{2k-1}}\cdot\frac{r_{2k-1}}{s_{2k-1}},	\\
		&x<\frac{p_{2k}}{q_{2k}}\cdot\frac{r_{2k-1}}{s_{2k-1}}.
	\end{align*}
	Combining the inequalities above, we get
	\begin{equation*}
		\begin{split}
			\frac{p_{2k-2}}{q_{2k-2}}\cdot\frac{r_{2k-3}}{s_{2k-3}}\leq x< \frac{p_{2k}}{q_{2k}}\cdot\frac{r_{2k-1}}{s_{2k-1}},	\\
			\frac{p_{2k-1}}{q_{2k-1}}\cdot\frac{r_{2k-2}}{s_{2k-2}}\leq x< \frac{p_{2k}}{q_{2k}}\cdot\frac{r_{2k-1}}{s_{2k-1}}.
		\end{split}
	\end{equation*}
	Therefore, we have 
	\begin{equation}\label{A5}
		\begin{split}
			\frac{p_{2k}}{q_{2k}}\cdot\frac{q_{2k-2}}{p_{2k-2}}&>\frac{r_{2k-3}}{s_{2k-3}}\cdot\frac{s_{2k-1}}{r_{2k-1}},	\\
			\frac{p_{2k}}{q_{2k}}\cdot\frac{q_{2k-1}}{p_{2k-1}}&>\frac{r_{2k-2}}{s_{2k-2}}\cdot\frac{s_{2k-1}}{r_{2k-1}}.
		\end{split}
	\end{equation}
	Using identities (\ref{pre1}) and (\ref{pre2}), we have
	\begin{equation}\label{A6}
		\begin{split}
			\frac{p_{2k-1}}{q_{2k-1}}&=\frac{p_{2k}}{q_{2k}}+\frac{1}{q_{2k}q_{2k-1}},	\\
			\frac{r_{2k-1}}{s_{2k-1}}&=\frac{r_{2k-2}}{s_{2k-2}}+\frac{1}{s_{2k-1}s_{2k-2}},	\\
			\frac{p_{2k}}{q_{2k}}&=\frac{p_{2k-2}}{q_{2k-2}}+\frac{a_{2k}}{q_{2k}q_{2k-2}},	\\
			\frac{r_{2k-3}}{s_{2k-3}}&=\frac{r_{2k-1}}{s_{2k-1}}+\frac{b_{2k-1}}{s_{2k-1}s_{2k-3}}.\\
		\end{split}
	\end{equation} By (\ref{A4}) above, we have $a_{2k}=1$ and $b_{2k-1}=1$. Plugging these values into (\ref{A6}), we get
	\begin{equation*}
		\begin{split}
			\frac{p_{2k-1}}{q_{2k-1}}&=\frac{p_{2k}}{q_{2k}}+\frac{1}{q_{2k}q_{2k-1}},	\\
			\frac{r_{2k-1}}{s_{2k-1}}&=\frac{r_{2k-2}}{s_{2k-2}}+\frac{1}{s_{2k-1}s_{2k-2}},	\\
			\frac{p_{2k}}{q_{2k}}&=\frac{p_{2k-2}}{q_{2k-2}}+\frac{1}{q_{2k}q_{2k-2}},	\\
			\frac{r_{2k-3}}{s_{2k-3}}&=\frac{r_{2k-1}}{s_{2k-1}}+\frac{1}{s_{2k-1}s_{2k-3}}.\\
		\end{split}
	\end{equation*} Then we transform (\ref{A5}) into 
	\begin{equation*}
		\begin{split}
			\left(\frac{p_{2k}}{q_{2k}}-\frac{1}{q_{2k}q_{2k-2}}\right)\frac{q_{2k}}{p_{2k}}<\left(\frac{r_{2k-3}}{s_{2k-3}}-\frac{1}{s_{2k-1}s_{2k-3}}\right)\frac{s_{2k-3}}{r_{2k-3}},	\\
			\left(\frac{p_{2k}}{q_{2k}}+\frac{1}{q_{2k}q_{2k-1}}\right)\frac{q_{2k}}{p_{2k}}<\left(\frac{r_{2k-2}}{s_{2k-2}}+\frac{1}{s_{2k-1}s_{2k-2}}\right)\frac{s_{2k-2}}{r_{2k-2}}.	\\
		\end{split}
	\end{equation*} We simplify and get 
	\begin{equation*}
		\begin{split}
			1-\frac{1}{p_{2k}q_{2k-2}}<1-\frac{1}{s_{2k-1}r_{2k-3}},	\\
			1+\frac{1}{p_{2k}q_{2k-1}}<1+\frac{1}{s_{2k-1}r_{2k-2}},	\\
		\end{split}
	\end{equation*} which immediately imply 
	\begin{equation*}
		\begin{split}
			p_{2k}q_{2k-2}<s_{2k-1}r_{2k-3},	\\
			p_{2k}q_{2k-1}>s_{2k-1}r_{2k-2},	\\
		\end{split}
	\end{equation*} and then
	\begin{equation}\label{A7}
		\frac{q_{2k-1}}{q_{2k-2}}>\frac{r_{2k-2}}{r_{2k-3}}.
	\end{equation}
	Now by identities (\ref{pre3}) and (\ref{pre4}), we rewrite (\ref{A7}) as
	\begin{equation*}
		[a_{2k-1};a_{2k-2},a_{2k-3},\ldots,a_4,a_3,a_2,a_1]>[b_{2k-2};b_{2k-3},b_{2k-4},\ldots,b_3,b_2].
	\end{equation*}
	Since all the odd-order elements are equal to $1$ due to (\ref{A4}), we have
	\begin{equation*}
		[1;a_{2k-2},1,a_{2k-4}\ldots,a_4,1,a_2,1]>[b_{2k-2};1,b_{2k-4},1,\ldots,1,b_2].
	\end{equation*} It follows that $b_{2k-2}=1,a_{2k-2}=1,\ldots,b_4=1,a_4=1$ and $b_2=1$ successively. Note that $a_2$ is a pre-determined constant. Plugging these values into (\ref{A3}), we get
	\begin{equation*}
		\begin{split}
			x&<[\underset{b_0=0,b_2=b_4=\cdots=b_{2k-2}=1}{\underbrace{b_0;1,b_2,1,b_4,\ldots,b_{2k-2},1}}]\cdot[\underset{a_0=[x],a_4=\cdots=a_{2k-2}=1}{\underbrace{a_0;1,a_2,1,a_4,\ldots,a_{2k-2},1,1}}]	\\
			&=[0;\underset{2k-1\text{ times}}{\underbrace{1,1,\ldots,1}}]\cdot[a_0;1,a_2,\underset{2k-2\text{ times}}{\underbrace{1,1,\ldots,1}}].
		\end{split}
	\end{equation*}
	To simplify our calculation, we use the notation $\iota_n=[0;\underset{n\text{ times}}{\underbrace{1,1,\ldots,1}}]$. Notice that $x$ satisfies
	\begin{equation*}
		a_0+\frac{1}{1+\frac{1}{a_2}}\leq x<\iota_{2k-1}\left(a_0+\frac{1}{1+\frac{1}{a_2+\iota_{2k-2}}}\right).
	\end{equation*} However, we have
	\begin{equation*}
		\begin{split}
			&\ \iota_{2k-1}\left(a_0+\frac{1}{1+\frac{1}{a_2+\iota_{2k-2}}}\right)-\left(a_0+\frac{1}{1+\frac{1}{a_2}}\right)	\\
			=&\ (\iota_{2k-1}-1)a_0+\frac{\iota_{2k-1}(a_2+\iota_{2k-2})}{a_2+\iota_{2k-2}+1}-\frac{a_2}{a_2+1}	\\
			=&\ (\iota_{2k-1}-1)a_0+\frac{(\iota_{2k-1}-1)a_2^2+(\iota_{2k-1}-1)(\iota_{2k-2}+1)a_2+\iota_{2k-1}\iota_{2k-2}}{(a_2+1)(a_2+\iota_{2k-2}+1)}	\\
			\leq&\ (\iota_{2k-1}-1)a_0+\frac{(\iota_{2k-1}-1)+(\iota_{2k-1}-1)(\iota_{2k-2}+1)+\iota_{2k-1}\iota_{2k-2}}{(a_2+1)(a_2+\iota_{2k-2}+1)}	\\
			=&\ (\iota_{2k-1}-1)a_0+\frac{(\iota_{2k-1}-1)+(\iota_{2k-1}-1)(\iota_{2k-2}+1)+\iota_{2k-1}\frac{1-\iota_{2k-1}}{\iota_{2k-1}}}{(a_2+1)(a_2+\iota_{2k-2}+1)}	\\
			=&\ (\iota_{2k-1}-1)a_0+\frac{(\iota_{2k-1}-1)(\iota_{2k-2}+1)}{(a_2+1)(a_2+\iota_{2k-2}+1)}	\\
			\leq&\ 0,
		\end{split}
	\end{equation*} which leads to a contradiction. To sum up, case (a) is impossible.

	
	\subsubsection*{\textbf{\emph{Case (b).}}} In this case, we apply the same ideas as in Case (a). Assume that we can only find $a_0,b_0,a_2,b_2,\ldots,a_{2k-2},b_{2k-2},a_{2k}$ at most. Since condition (\ref{Con1}) is satisfied for at most $k+1$ and condition (\ref{Con2}) is satisfied for at most $k$, the values of $a_0,b_0,a_2,b_2,a_4,b_4\ldots,a_{2k-2},b_{2k-2},a_{2k}$ are chosen such that
	\begin{align}
		[a_0;1,a_2,\ldots,a_{2k}]\leq\  &\frac{x}{[b_0;1,b_2,\ldots,b_{2k-2},1]}<[a_0;1,a_2,\ldots,a_{2k},1],	\label{B1}\\
		[b_0;1,b_2,\ldots,b_{2k-2}]\leq\  &\frac{x}{[a_0;1,a_2,\ldots,a_{2k-2},1]}<[b_0;1,b_2,\ldots,b_{2k-2},1],	\label{B2}\\
		&\frac{x}{[a_0;1,a_2,\ldots,a_{2k},1]}<[b_0;1,b_2,\ldots,b_{2k-2},1,1].	\label{B3}
	\end{align}

	Following the same notation for continued fractions 
	\begin{equation}\label{B4}
		[a_0;1,a_2,1,a_4,\ldots,a_{2k},1]\quad\text{and}\quad[b_0;1,b_2,1,b_4,\ldots,b_{2k-2},1,1],
	\end{equation} we can rewrite conditions (\ref{B1})-(\ref{B3}) as follows:
	\begin{align*}
		\frac{p_{2k}}{q_{2k}}\cdot\frac{r_{2k-1}}{s_{2k-1}}\leq\  &x<\frac{p_{2k+1}}{q_{2k+1}}\cdot\frac{r_{2k-1}}{s_{2k-1}},	\\
		\frac{p_{2k-1}}{q_{2k-1}}\cdot\frac{r_{2k-2}}{s_{2k-2}}\leq\  &x<\frac{p_{2k-1}}{q_{2k-1}}\cdot\frac{r_{2k-1}}{s_{2k-1}},	\\
		&x<\frac{p_{2k+1}}{q_{2k+1}}\cdot\frac{r_{2k}}{s_{2k}}.
	\end{align*}
	Combining the inequalities above, we get
	\begin{align*}
		\frac{p_{2k}}{q_{2k}}\cdot\frac{r_{2k-1}}{s_{2k-1}}\leq x< \frac{p_{2k+1}}{q_{2k+1}}\cdot\frac{r_{2k}}{s_{2k}},	\\
		\frac{p_{2k-1}}{q_{2k-1}}\cdot\frac{r_{2k-2}}{s_{2k-2}}\leq x< \frac{p_{2k+1}}{q_{2k+1}}\cdot\frac{r_{2k}}{s_{2k}}.	\\
	\end{align*}
	Therefore, we have 
	\begin{equation}\label{B5}
		\begin{split}
			\frac{p_{2k+1}}{q_{2k+1}}\cdot\frac{q_{2k}}{p_{2k}}&>\frac{r_{2k-1}}{s_{2k-1}}\cdot\frac{s_{2k}}{r_{2k}},	\\
			\frac{p_{2k+1}}{q_{2k+1}}\cdot\frac{q_{2k-1}}{p_{2k-1}}&>\frac{r_{2k-2}}{s_{2k-2}}\cdot\frac{s_{2k}}{r_{2k}}.	\\
		\end{split}
	\end{equation}
	Using identities (\ref{pre1}) and (\ref{pre2}), we have 
	\begin{equation}\label{B6}
		\begin{split}
			\frac{p_{2k+1}}{q_{2k+1}}&=\frac{p_{2k}}{q_{2k}}+\frac{1}{q_{2k+1}q_{2k}},	\\
			\frac{r_{2k-1}}{s_{2k-1}}&=\frac{r_{2k}}{s_{2k}}+\frac{1}{s_{2k}s_{sk-1}},	\\
			\frac{p_{2k-1}}{q_{2k-1}}&=\frac{p_{2k+1}}{q_{2k+1}}+\frac{a_{2k+1}}{q_{2k+1}q_{2k-1}},	\\
			\frac{r_{2k}}{s_{2k}}&=\frac{r_{2k-2}}{s_{2k-2}}+\frac{b_{2k}}{s_{2k}s_{2k-2}}.	\\
		\end{split}
	\end{equation}
	By (\ref{B4}) above, we have $a_{2k+1}=1$ and $b_{2k}=1$. Plugging these values into (\ref{B6}), we get 
	\begin{align*}
		\frac{p_{2k+1}}{q_{2k+1}}&=\frac{p_{2k}}{q_{2k}}+\frac{1}{q_{2k+1}q_{2k}},	\\
		\frac{r_{2k-1}}{s_{2k-1}}&=\frac{r_{2k}}{s_{2k}}+\frac{1}{s_{2k}s_{sk-1}},	\\
		\frac{p_{2k-1}}{q_{2k-1}}&=\frac{p_{2k+1}}{q_{2k+1}}+\frac{1}{q_{2k+1}q_{2k-1}},	\\
		\frac{r_{2k}}{s_{2k}}&=\frac{r_{2k-2}}{s_{2k-2}}+\frac{1}{s_{2k}s_{2k-2}}.	\\
	\end{align*}
	Then we transform (\ref{B5}) into 
	\begin{align*}
		\left(\frac{p_{2k+1}}{q_{2k+1}}-\frac{1}{q_{2k+1}q_{2k}}\right)\frac{q_{2k+1}}{p_{2k+1}}&<\left(\frac{r_{2k-1}}{s_{2k-1}}-\frac{1}{s_{2k}s_{2k-1}}\right)\frac{s_{2k-1}}{r_{2k-1}},	\\
		\left(\frac{p_{2k+1}}{q_{2k+1}}+\frac{1}{q_{2k+1}q_{2k-1}}\right)\frac{q_{2k+1}}{p_{2k+1}}&<\left(\frac{r_{2k-2}}{s_{2k-2}}+\frac{1}{s_{2k}s_{2k-2}}\right)\frac{s_{2k-2}}{r_{2k-2}}.
	\end{align*}
	We simplify and get 
	\begin{align*}
		1-\frac{1}{p_{2k+1}q_{2k}}<1-\frac{1}{r_{2k-1}s_{2k}},	\\
		1+\frac{1}{p_{2k+1}q_{2k-1}}<1+\frac{1}{r_{2k-2}s_{2k}},
	\end{align*}
	which immediately imply 
	\begin{align*}
		p_{2k+1}q_{2k}<r_{2k-1}s_{2k},	\\
		p_{2k+1}q_{2k-1}>r_{2k-2}s_{2k},	\\
	\end{align*}
	and then
	\begin{equation}\label{B7}
		\frac{q_{2k}}{q_{2k-1}}<\frac{r_{2k-1}}{r_{2k-2}}.
	\end{equation}
	Now by identities (\ref{pre3}) and (\ref{pre4}), we rewrite (\ref{B7}) as 
	\begin{equation*}
		[a_{2k};a_{2k-1},a_{2k-2},\ldots,a_4,a_3,a_2,a_1]<[b_{2k-1};b_{2k-2},b_{2k-3},\ldots,b_3,b_2].
	\end{equation*}
	Since all the odd-order elements are equal to $1$ due to definition (\ref{B4}), we have 
	\begin{equation*}
		[a_{2k};1,a_{2k-2},1,\ldots,a_4,1,a_2,1]<[1;b_{2k-2},1,b_{2k-4},\ldots,1,b_2].
	\end{equation*} It follows that $a_{2k}=1,b_{2k-2}=1,\ldots,b_4=1,a_4=1$ and $b_2=1$ successively. Note that $a_2$ is a pre-determined constant. Plugging these values into (\ref{B3}), we get
	\begin{equation*}
		\begin{split}
			x&<[\underset{b_0=0,b_2=b_4=\cdots=b_{2k-2}=1}{\underbrace{b_0;1,b_2,1,b_4,\ldots,b_{2k-2},1,1}}]\cdot[\underset{a_0=[x],a_4=\cdots=a_{2k}=1}{\underbrace{a_0;1,a_2,1,a_4,\ldots,a_{2k},1}}]	\\
			&=[0;\underset{2k\text{ times}}{\underbrace{1,1,\ldots,1}}]\cdot[a_0;1,a_2,\underset{2k-1\text{ times}}{\underbrace{1,1,\ldots,1}}].
		\end{split}
	\end{equation*}
	Similarly, we notice that $x$ satisfies
	\begin{equation*}
		a_0+\frac{1}{1+\frac{1}{a_2}}\leq x<\iota_{2k}\left(a_0+\frac{1}{1+\frac{1}{a_2+\iota_{2k-1}}}\right).
	\end{equation*} However, we have
	\begin{equation*}
		\begin{split}
			&\ \iota_{2k}\left(a_0+\frac{1}{1+\frac{1}{a_2+\iota_{2k-1}}}\right)-\left(a_0+\frac{1}{1+\frac{1}{a_2}}\right)	\\
			=&\ (\iota_{2k}-1)a_0+\frac{(\iota_{2k}-1)a_2^2+(\iota_{2k}-1)(\iota_{2k-1}+1)a_2+\iota_{2k}\iota_{2k-1}}{(a_2+1)(a_2+\iota_{2k-1}+1)}	\\
			\leq&\ (\iota_{2k}-1)a_0+\frac{(\iota_{2k}-1)+(\iota_{2k}-1)(\iota_{2k-1}+1)+\iota_{2k}\iota_{2k-1}}{(a_2+1)(a_2+\iota_{2k-1}+1)}	\\
			=&\ (\iota_{2k}-1)a_0+\frac{(\iota_{2k}-1)(\iota_{2k-1}+1)}{(a_2+1)(a_2+\iota_{2k-1}+1)}	\\
			\leq&\ 0,
		\end{split}
	\end{equation*} which leads to a contradiction. To sum up, case (b) is impossible as well.
\end{proof}

\begin{prop}\label{product}
	Every positive real number $x\in\mathbb{Z}+[1/2,1)$ can be written as a product of two continued fractions whose odd-order partial quotients are equal to $1$, i.e.,\begin{equation}
		x=[a_0;1,a_2,1,a_4,\ldots]\cdot[b_0;1,b_2,1,b_4,\ldots].
	\end{equation}
\end{prop}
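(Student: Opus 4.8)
The plan is to run the same argument used for Proposition~\ref{sum}, now in multiplicative form. By the lemmas preceding this proposition we may assume that Construction~\ref{construction2} produces sequences $(a_{2k})_{k\geq1}$ and $(b_{2k})_{k\geq1}$ for which condition~(\ref{Con1}) holds for every $k\in\mathbb{N}$; the case in which the construction terminates after finitely many steps is already disposed of by the remark following Construction~\ref{construction2}, which exhibits a finite decomposition of the required shape. Set
\begin{equation*}
	\alpha=[a_0;1,a_2,1,a_4,\ldots],\qquad \beta=[b_0;1,b_2,1,b_4,\ldots].
\end{equation*}
Both are genuine finite limits of continued fractions with positive integer partial quotients, and both are positive (for $\alpha$ because its fractional part $[0;1,a_2,\ldots]$ is positive; for $\beta$ because $b_0=0$ and $b_1=1$ give $\beta\in(0,1)$).

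Next I would simply clear the denominator in~(\ref{Con1}). Since $[b_0;1,b_2,\ldots,b_{2k-2},1]>0$, multiplying through by it yields
\begin{align*}
	[a_0;1,a_2,\ldots,a_{2k-2},1,a_{2k}]\cdot[b_0;1,b_2,\ldots,b_{2k-2},1]\leq x	\\
	<[a_0;1,a_2,\ldots,a_{2k-2},1,a_{2k},1]\cdot[b_0;1,b_2,\ldots,b_{2k-2},1].
\end{align*}
Here the first factor on the left-hand side is the $(2k)$th convergent of $\alpha$, the first factor on the right-hand side is its $(2k+1)$th convergent, and the second factor on each side is the $(2k-1)$th convergent of $\beta$. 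Letting $k\to\infty$, both the left-hand side and the right-hand side tend to $\alpha\beta$, since a product of bounded convergent sequences converges to the product of the limits; the squeeze then forces $\alpha\beta\leq x\leq\alpha\beta$, that is $x=\alpha\beta$, which is exactly the asserted decomposition.

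The proof carries essentially no obstacle beyond what the preceding lemmas have already supplied: all the substantive work lies in showing that~(\ref{Con1}) can be maintained for every $k$, which has been done. The only points meriting a moment's care are the positivity of $[b_0;1,b_2,\ldots,b_{2k-2},1]$ (so that multiplication preserves the direction of the inequalities in~(\ref{Con1})) and the elementary fact that products of bounded convergent sequences pass to limits factorwise. I would also note, mirroring the additive case, that the endpoint value $x\in\mathbb{Z}+\{1\}$ appearing in Theorem~\ref{main2} is not covered by this proposition and is instead handled separately in the proof of that theorem via the explicit identity $1=[0;1]\cdot[0;1]$ together with its integer translates.
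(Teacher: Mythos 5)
Your proposal is correct and follows essentially the same path as the paper's own proof: assume Construction~\ref{construction2} supplies sequences for which (\ref{Con1}) holds for all $k$, multiply through by the (positive) partial continued fraction of $\beta$, and let $k\to\infty$ to squeeze $x$ between two sequences both converging to $\alpha\beta$. The only difference is that you spell out two small points the paper leaves implicit — the positivity of $[b_0;1,b_2,\ldots,b_{2k-2},1]$ and the elementary fact that a product of convergent sequences converges factorwise — and explicitly flag that the integer case of Theorem~\ref{main2} falls outside the proposition and is handled separately; none of these change the substance.
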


\begin{proof}
	We set $a_0=[x]$ and $b_0=0$ firstly. Suppose we have chosen values $(a_{2k})_{k=0}^\infty$ and $(b_{2k})_{k=0}^\infty$ such that for every $k\in\mathbb{N}$, condition (\ref{Con1}) holds true, i.e., \begin{equation}\label{p1}
		\begin{split}
			[a_0;1,a_2,1,a_4,\ldots,a_{2k-2},1,a_{2k}]\cdot[b_0;1,b_2,1,b_4,\ldots,b_{2k-2},1]\leq x	\\
			<[a_0;1,a_2,1,a_4,\ldots,a_{2k-2},1,a_{2k},1]\cdot[b_0;1,b_2,1,b_4,\ldots,b_{2k-2},1].
		\end{split}
	\end{equation} Each of the two continued fractions
	\begin{equation*}
		\alpha=[a_0;1,a_2,1,a_4,\ldots],	\quad\beta=[b_0;1,b_2,1,b_4,\ldots],
	\end{equation*} has a finite limit. Consequently, inequality (\ref{p1}) implies 
	\begin{equation*}
		\begin{split}
			\lim_{k\rightarrow\infty}[a_0;1,a_2,1,a_4,\ldots,a_{2k-2},1,a_{2k}]\cdot[b_0;1,b_2,1,b_4,\ldots,b_{2k-2},1]\leq x	\\
			\leq\lim_{k\rightarrow\infty}[a_0;1,a_2,1,a_4,\ldots,a_{2k-2},1,a_{2k},1]\cdot[b_0;1,b_2,1,b_4,\ldots,b_{2k-2},1],
		\end{split}
	\end{equation*} and so we have $\alpha\cdot\beta\leq x\leq\alpha\cdot\beta$. Hence,
	\begin{equation*}
		x=\alpha\cdot\beta=[a_0;1,a_2,1,a_4,\ldots]\cdot[b_0;1,b_2,1,b_4,\ldots],
	\end{equation*} as we need.
\end{proof}

As a corollary, we prove our second main Theorem \ref{main2}.

\begin{proof}[Proof of Theorem \ref{main2}]
	For every positive real number $x\in\mathbb{Z}+[1/2,1)$, by Proposition \ref{product}, we can find two continued fractions $\alpha$ and $\beta$ such that 
	\begin{equation*}
		x=\alpha\cdot\beta=[a_0;1,a_2,1,a_4,\ldots]\cdot[b_0;1,b_2,1,b_4,\ldots].
	\end{equation*}
	For positive integer $x$, write $x=[x-1;1]\cdot[0;1]$.
\end{proof}

\section{Hausdorff dimension of $F(B)$}\label{5}

In this section, we shall determine the Hausdorff dimension of set
\begin{equation*}
	F(B)=\{x\in I_{even}\colon a_{2n}(x)\geq B^{2n}\ \text{i.m.}\ n\}	\quad(B>1),
\end{equation*}

and prove our third main theorem \ref{main3}. To this end, we estimate the upper bound and lower bound of the Hausdorff dimension of $F(B)$ separately. The upper bound estimation is rather direct while the lower bound estimation is much more complicated. To give a proper upper bound estimation, we directly provide a sequences of covers on our target set. To give a lower bound estimation, we create a series of indices $\{n_k\}_{k\geq 1}$ to simulate the phenomena that partial quotients grow at an exponential rate. We construct a subset $F_\alpha(B)\subset F(B)$ based on $\{n_k\}_{k\geq 1}$ and use Hausdorff dimension of $F_\alpha(B)$ to approximate that of $F(B)$. Our method are motivated by Wang and Wu \cite{MR2419924}.

Fix $\alpha\in\mathbb{N}$ and a strictly increasing sequence $\varLambda=\{n_1,n_2,\ldots\}$ with $n_1=1$ and $n_k\in\mathbb{N}$ for all $k\geq1$. Let
\begin{align*}
	F_\alpha(B)\coloneqq\{x\in I_{even}\colon &[B^{2n_k}]+1\leq a_{2n_k}(x)\leq 2[B^{2n_k}]\ \text{for all}\ k\geq1,\ \text{and}	\\
	&1\leq a_{2j}(x)\leq\alpha,\ \text{for all}\ j\ne n_k\}.
\end{align*}

We will use a kind of symbolic space defined as follows to describe the structure of $F_\alpha(B)$. For any $n\geq1$, we define
\begin{align*}
	D_n\coloneqq\{(\sigma_2,\ldots,\sigma_{2n})\in\mathbb{N}^n\colon&[B^{2n_k}]+1\leq \sigma_{2n_k}\leq 2[B^{2n_k}],\ 1\leq n_k\leq n,\ \text{and}	\\
	&1\leq\sigma_{2j}\leq\alpha,\ 1\leq j\ne n_k\leq n\},
\end{align*}
\begin{equation*}
	D=\bigcup^\infty_{n=0}D_n\quad(D_0=\varnothing).
\end{equation*}
For any $n\geq1$ and $(\sigma_2,\ldots,\sigma_{2n})\in D_{n}$, we call $I(1,\sigma_2,\ldots,1,\sigma_{2n})$ a \textit{basic interval of order $n$} and 
\begin{equation}\label{c1}
	J(1,\sigma_2,\ldots,1,\sigma_{2n})=\bigcup_{\sigma_{2(n+1)}}\textnormal{cl}I(1,\sigma_2,\ldots,1,\sigma_{2n},1,\sigma_{2(n+1)})
\end{equation}
a \textit{fundamental interval of order $n$}, where the union in (\ref{c1}) is taken over all $\sigma_{2(n+1)}$ such that $(\sigma_2,\ldots,\sigma_{2n},\sigma_{2(n+1)})\in D_{n+1}$. From (\ref{pre5}), if $n\ne n_k-1$ for any $k\geq1$, then
\begin{equation}\label{c2}
	\abs{J(1,\sigma_2,\ldots,1,\sigma_{2n})}=\frac{\alpha}{(q_{2n+1}+q_{2n})((\alpha+1)q_{2n+1}+q_{2n})},
\end{equation} if $n=n_k-1$ for some $k\geq1$, then
\begin{equation}\label{c3}
	\abs{J(1,\sigma_2,\ldots,1,\sigma_{2n})}=\frac{[B^{2n_k}]}{(([B^{2n_k}]+1)q_{2n+1}+q_{2n})((2[B^{2n_k}]+1)q_{2n+1}+q_{2n})}.
\end{equation} It is clearly that \begin{equation}\label{c4}
	F_\alpha(B)=\bigcap_{n\geq1}\bigcup_{(\sigma_2,\ldots,\sigma_{2n})\in D_n}J(1,\sigma_2,\ldots,1,\sigma_{2n}).
\end{equation}

\subsection{Upper bound of Hausdorff dimension of $F(B)$}

We directly calculate the upper bound of $\dim_H F(B)$.

\begin{prop}\label{upper}
	For any $B>1$, we have $\dim_H F(B)\leq s_B$.
\end{prop}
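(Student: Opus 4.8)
The plan is to cover $F(B)$ by a natural family of cylinder sets and estimate the resulting sums using the auxiliary functions $f_{n,B}$ introduced in Section \ref{2}. Since $F(B)$ consists of those $x\in I_{even}$ for which $a_{2n}(x)\geq B^{2n}$ for infinitely many $n$, for each fixed $N$ we have
\begin{equation*}
	F(B)\subseteq\bigcup_{n\geq N}\bigcup_{\substack{a_2,a_4,\ldots,a_{2n-2}\in\mathbb{N}\\ a_{2n}\geq B^{2n}}}\textnormal{cl}\,I(1,a_2,1,a_4,\ldots,1,a_{2n-2},1,a_{2n}).
\end{equation*}
First I would fix $s>s_B$ and aim to show $\mathcal{H}^s(F(B))=0$ by bounding $\sum\abs{I(1,a_2,\ldots,1,a_{2n})}^s$ over the above cover for each $n$. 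Using \eqref{pre5}, Lemma \ref{lem1} and Lemma \ref{lem2}, the length of a cylinder $I(1,a_2,1,a_4,\ldots,1,a_{2n})$ is comparable to $1/q_{2n}^2$, and since $a_{2n}\geq B^{2n}$ forces $q_{2n}\gtrsim B^{2n}q_{2n-2}(1,a_2,\ldots,1,a_{2n-2})$ (again by Lemma \ref{lem1}), summing over $a_{2n}\geq B^{2n}$ contributes a factor comparable to $1/(B^{2n}q_{2n-2})^{2s-1}$ times $1/q_{2n-2}$, up to constants.

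The key step is then to recognize the remaining sum over $a_2,\ldots,a_{2n-2}$ as essentially $f_{n-1,B'}(s)$ for a suitable $B'$ (or to bound it directly by a product using the submultiplicativity $q_{2(n-1)}\geq q_{2k}q_{2(n-1-k)}$ from Lemma \ref{lem2}), and to invoke that $f_{m,B}(s)\to 0$ geometrically when $s>s_B$, because $s_{m,B}\to s_B$ by Lemma \ref{lem3} and hence $f_{m,B}(s)\leq f_{m,B}(s_{m,B})\cdot(\text{something}<1)^m$ for $m$ large. More precisely, once $s>s_{m,B}$ for all large $m$ we get $f_{m,B}(s)<1$, and the extra room $s-s_B>0$ combined with the growth $q_{2m}\geq 2^{m/2-1}$ gives $f_{m,B}(s)\leq\theta^m$ for some $\theta<1$. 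Therefore the $n$th piece of the cover contributes at most $C\cdot\theta^{n}$ (roughly), and summing over $n\geq N$ gives a tail that tends to $0$ as $N\to\infty$; since the diameters of the covering cylinders also tend to $0$, this yields $\mathcal{H}^s(F(B))=0$, hence $\dim_H F(B)\leq s$. Letting $s\downarrow s_B$ finishes the proof.

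I expect the main obstacle to be the bookkeeping that converts the raw cylinder-length sum into the quantity $f_{n,B}$ with the correct exponent and the correct power of $B$: the definition of $s_B$ uses the weight $(B^{2n}q_{2n}^2)^\rho$, so one must carefully track how the constraint $a_{2n}\geq B^{2n}$ produces exactly the factor $B^{2n}$ (versus $B^{2n\cdot s}$ or similar), and handle the constants from Lemmas \ref{lem1} and \ref{lem2} so that they do not accumulate over $n$. A clean way around accumulation is to choose $s$ strictly between $s_B$ and the target, absorb all multiplicative constants into the geometric decay coming from $q_{2n}\geq 2^{n/2-1}$, and only at the end take the limit $s\downarrow s_B$; the monotonicity and continuity properties of $s_{n,B}$ and $f_{n,B}$ established in Section \ref{2} are exactly what make this absorption legitimate.
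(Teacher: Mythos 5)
Your overall strategy (cover, estimate lengths, relate to $f_{n,B}$, use $s_{n,B}\to s_B$, let $s\downarrow s_B$) is the correct one, and the concern you flag at the end about ``getting exactly the factor $B^{2n}$'' is in fact a fatal flaw in the cover you chose, not merely a bookkeeping nuisance.

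The problem is that you cover by the raw cylinders $I(1,a_2,\ldots,1,a_{2n})$ with $a_{2n}\geq B^{2n}$, and then sum $|I|^s$ term by term. Carrying out the computation you sketch: $|I|\asymp q_{2n}^{-2}\asymp (a_{2n}q_{2n-2})^{-2}$, so
\begin{equation*}
\sum_{a_{2n}\geq B^{2n}}|I(1,a_2,\ldots,1,a_{2n})|^{s}\asymp \frac{B^{-2n(2s-1)}}{q_{2n-2}^{2s}},
\end{equation*}
and summing over $a_2,\ldots,a_{2n-2}$ and rewriting in terms of $f_{n-1,B}$ (which carries the weight $B^{-2(n-1)s}$) gives a block total comparable to $B^{2n(1-s)-2s}\,f_{n-1,B}(s)$. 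The prefactor $B^{2n(1-s)}$ grows exponentially whenever $s<1$, and the geometric decay of $f_{n-1,B}(s)$ that you invoke (coming from $s>s_{m,B}$ and $q_{2m}>2^{m-1}$) is only of order $\bigl(B^{2}2^{2}\bigr)^{-m(s-s_B)/2}$, which cannot absorb $B^{2n(1-s)}$ when $s$ is close to $s_B$. Equivalently, in pressure terms: your block sum is $\asymp B^{-2n(2s-1)}\sum q_{2n-2}^{-2s}$, which tends to zero precisely when $P(s)<(2s-1)\log B$, while $s_B$ is the solution of $P(s_B)=s_B\log B$; since $2s-1<s$ for $s<1$, your threshold is strictly larger than $s_B$. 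So your argument as written only yields an upper bound strictly larger than $s_B$ (unless $s_B=1$, where there is nothing to prove).

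The fix is to merge the cylinders before summing, as the paper does. Because $t\mapsto t^s$ is subadditive for $s\leq1$, summing $|I|^s$ over many small sub-cylinders overshoots: one should instead use the single interval
\begin{equation*}
J(1,a_2,\ldots,1,a_{2n})=\bigl\{x\in I_{even}\colon a_{2k}(x)=a_{2k},\ 1\leq k\leq n,\ a_{2(n+1)}(x)\geq B^{2(n+1)}\bigr\},
\end{equation*}
which is exactly the union (a contiguous interval) of the order-$(2n+2)$ cylinders with large $(2n+2)$-nd partial quotient. Its length is
\begin{equation*}
|J|\leq\frac{1}{q_{2n+1}\bigl(B^{2(n+1)}q_{2n+1}+q_{2n}\bigr)}\leq\frac{1}{B^{2n}q_{2n}^{2}},
\end{equation*}
so $\sum_{a_2,\ldots,a_{2n}}|J|^{s}\leq f_{n,B}(s)$ with no spurious $B^{2n(1-s)}$ factor, and then the argument you outline (use $s_{n,B}<s_B+\varepsilon/2$ for large $n$, and $B^{2n}q_{2n}^2>B^{2n}2^{2n-2}$ to extract geometric decay, then let $\varepsilon\downarrow 0$) goes through cleanly. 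In short: the covering pieces must carry the ``large partial quotient'' constraint one index \emph{ahead} of the cylinder depth and be merged into single intervals; covering by individual cylinders at the depth of the large digit gives the wrong power of $B$ and does not prove the sharp bound.
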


\begin{proof}
	Notice that \begin{align*}
		F(B)&=\bigcap_{N=1}^{\infty}\bigcup_{n\geq N}\{x\in I_{even}\colon a_{2(n+1)}(x)\geq B^{2(n+1)}\}	\\
		&=\bigcap_{N=1}^{\infty}\bigcup_{n\geq N}\bigcup_{a_2,\ldots,a_{2n}}\{x\in I_{even}\colon a_{2k}(x)=a_{2k},\ 1\leq k\leq n,\ a_{2(n+1)}(x)\geq B^{2(n+1)}\}	\\
		&\coloneqq\bigcap_{N=1}^{\infty}\bigcup_{n\geq N}\bigcup_{a_2,\ldots,a_{2n}}J(1,a_2,\ldots,1,a_{2n}).
	\end{align*} From (\ref{pre5}), we have 
	\begin{equation*}
		\abs{J(1,a_2,\ldots,1,a_{2n})}\leq\frac{1}{q_{2n+1}(B^{2(n+1)}q_{2n+1}+q_{2n})}\leq\frac{1}{B^{2n}q_{2n}^2}.
	\end{equation*}
	For any $\varepsilon>0$, we have $s_{n,B}<s_B+\frac{\varepsilon}{2}$ when $n$ is large enough. By the definition of $s_{n,B}$ and the fact that $q_n>2^{n/2-1}$, we have \begin{align*}
		\mathcal{H}^{s_B+\varepsilon}(F(B))&\leq\liminf_{N\rightarrow \infty}\sum_{n\geq N}\sum_{a_2,\ldots,a_{2n}}\abs{J(1,a_2,\ldots,1,a_{2n})}^{s_B+\varepsilon}	\\
		&\leq\liminf_{N\rightarrow \infty}\sum_{n\geq N}\sum_{a_2,\ldots,a_{2n}}\frac{1}{(B^{2n}q_{2n}^2)^{s_B+\varepsilon}}	\\
		&\leq\liminf_{N\rightarrow \infty}\sum_{n\geq N}\sum_{a_2,\ldots,a_{2n}}\frac{1}{(B^{2n}q_{2n}^2)^{s_{n,B}+\varepsilon/2}}	\\
		&\leq\liminf_{N\rightarrow \infty}\sum_{n\geq N}\sum_{a_2,\ldots,a_{2n}}\frac{1}{(B^{2n}q_{2n}^2)^{s_B}}\cdot\frac{1}{B^{n\varepsilon}2^{(n-2)\varepsilon/2}}	\\
		&\leq\liminf_{N\rightarrow \infty}\sum_{n\geq N}\frac{1}{B^{n\varepsilon}2^{(n-2)\varepsilon/2}}=0.
	\end{align*} It follows that $\dim_H F(B)\leq s_B+\varepsilon$. Since $\varepsilon>0$ is arbitrary, we have $\dim_H F(B)\leq s_B$.
\end{proof}

\subsection{Construction of a suitable measure}

In this subsection, we define a probability measure $\mu$ supported on $F_\alpha(B)$ and give the estimation of $\mu(J(1,\sigma_2,\ldots,1,\sigma_{2n}))$, for any $(\sigma_2,\ldots,\sigma_{2n})\in D_n$.

Let $\{n_k\}_{k\geq1}$ be an appropriate sequence of positive integers with $1=n_1<n_2<\cdots<n_k<\cdots$, and define $m_1=n_1-1=0$, $m_k=n_k-n_{k-1}-1$ for any $k\geq2$. Now we define a set function $\mu\colon\{J(1,\sigma_2,\ldots,1,\sigma_{2k})\colon(\sigma_2,\ldots,\sigma_{2k})\in D\backslash D_0,\ k\geq1\}\rightarrow\mathbb{R}^+$, given as follows.

For any $(\sigma_2)\in D_1=D_{n_1}$, let
\begin{equation*}
	\mu(J(1,\sigma_2))=\frac{1}{[B^2]}.
\end{equation*}
For any $(\sigma_2,\ldots,\sigma_{2(n_2-1)})\in D_{n_2-1}$, let
\begin{equation*}
	\mu(J(1,\sigma_2,\ldots,1,\sigma_{2(n_2-1)}))=\mu(J(1,\sigma_2))\left(\frac{1}{B^{2m_2}q_{2m_2}^2(1,\sigma_4,\ldots,1,\sigma_{2(n_2-1)})}\right)^{s_{m_2,B}(\alpha)}
\end{equation*}
and for any $(\sigma_2,\ldots,\sigma_{2n_2})\in D_{n_2}$, let
\begin{equation*}
	\mu(J(1,\sigma_2,\ldots,1,\sigma_{2n_2}))=\frac{1}{[B^{2n_2}]}\mu(J(1,\sigma_2,\ldots,1,\sigma_{2(n_2-1)})).
\end{equation*}
For any $k\geq2$, $\mu(J(1,\sigma_2,\ldots,1,\sigma_{2n_k}))$ and $\mu(J(1,\sigma_2,\ldots,1,\sigma_{2(n_{k+1}-1)}))$ can be defined recursively for any $(\sigma_2,\ldots,\sigma_{2n_k})\in D_{n_k}$ and any $(\sigma_2,\ldots,\sigma_{2(n_{k+1}-1)})\in D_{n_{k+1}-1}$. Let \begin{equation*}
	\mu(J(1,\sigma_2,\ldots,1,\sigma_{2n_k}))=\frac{1}{[B^{2n_k}]}\mu(J(1,\sigma_2,\ldots,1,\sigma_{2(n_{k}-1)})),
\end{equation*}
and let
\begin{align*}
	&\ \mu(J(1,\sigma_2,\ldots,1,\sigma_{2(n_{k+1}-1)}))	\\
	=&\ \mu(J(1,\sigma_2,\ldots,1,\sigma_{2n_k}))\left(\frac{1}{B^{2m_{k+1}}q_{2m_{k+1}}^2(1,\sigma_{2(n_k+1)},\ldots,1,\sigma_{2(n_{k+1}-1)})}\right)^{s_2{m_{k+1},B}(\alpha)}.
\end{align*}
For any $n_{k-1}<n<n_k-1$ and $(\sigma_2,\ldots,\sigma_{2n})\in D_{n}$, let
\begin{equation*}
	\mu(J(1,\sigma_2,\ldots,1,\sigma_{2n}))=\sum_{\sigma_{2(n+1)},\ldots,\sigma_{2(n_k-1)}\leq\alpha}\mu(J(1,\sigma_2,\ldots,1,\sigma_{2n},1,\sigma_{2(n+1)},\ldots,1,\sigma_{2(n_k-1)})).
\end{equation*}

Now, our desired set function $\mu\colon\{J(1,\sigma_2,\ldots,1,\sigma_{2k})\colon(\sigma_2,\ldots,\sigma_{2k})\in D\backslash D_0,\ k\geq1\}\rightarrow\mathbb{R}^+$ is well-defined. By remark \ref{rem1}, it is easy to check that for any $n\geq1$ and $(\sigma_2,\sigma_{2n})\in D_n$, we have
\begin{equation*}
	\mu(J(1,\sigma_2,\ldots,1,\sigma_{2n}))=\sum_{\sigma_{2(n+1)}\colon (\sigma_2,\ldots,\sigma_{2n},\sigma_{2(n+1)})\in D_{n+1}}\mu(J(1,\sigma_2,\ldots,1,\sigma_{2(n+1)})).
\end{equation*}
Notice that 
\begin{equation*}
	\sum_{\sigma_2\in D_1}\mu(J(1,\sigma_2))=1,
\end{equation*}
by Kolmogorov extension theorem, the set function $\mu$ can be extended into a probability measure supported on $F_\alpha(B)$, which is still denoted by $\mu$.

From the definition of $\mu$, we have for $(\sigma_2,\ldots,\sigma_{2n})\in D_n$, if $n=n_k$ for some $k\geq1$, 
\begin{equation}\label{mu1}
	\mu(J(1,\sigma_2,\ldots,1,\sigma_{2n_k}))=\prod_{j=1}^{k}\frac{1}{[B^{2n_j}]}\left(\frac{1}{B^{2m_j}q_{2m_j}^2}\right)^{s_{m_j,B}(\alpha)},
\end{equation}
if $n=n_k-1$ for some $k\geq2$,
\begin{equation}\label{mu2}
	\mu(J(1,\sigma_2,\ldots,1,\sigma_{2(n_k-1)}))=[B^{2n_k}]\prod_{j=1}^{k}\frac{1}{[B^{2n_j}]}\left(\frac{1}{B^{2m_j}q_{2m_j}^2}\right)^{s_{m_j,B}(\alpha)},
\end{equation}
and if $n_{k-1}<n<n_k-1$ for some $k\geq2$,
\begin{equation}\label{mu3}
	\mu(J(1,\sigma_2,\ldots,1,\sigma_{2n}))=\sum_{\sigma_{2(n+1)},\ldots,\sigma_{2(n_k-1)}\leq\alpha}[B^{2n_k}]\prod_{j=1}^{k}\frac{1}{[B^{2n_j}]}\left(\frac{1}{B^{2m_j}q_{2m_j}^2}\right)^{s_{m_j,B}(\alpha)}.
\end{equation}

Now we start to consider the estimation of $\mu(J(1,\sigma_2,\ldots,1,\sigma_{2n}))$, for which $(\sigma_2,\ldots,\sigma_{2n})\in D_n$.

\begin{prop}[Estimation of $\mu(J(1,\sigma_2,\ldots,1,\sigma_{2n}))$]\label{estmu}
	For any $B>1$, $0<\varepsilon<1/4$, take $t=s_B(\alpha)-2\varepsilon>0$. We can choose a proper sequence of $\{n_k\}_{k\geq1}$ and $k_0$ sufficiently large. Then there exists a constant $c_I$ simply related to $\alpha,B,n_1,\ldots,n_{k_0}$ such that for any $n\geq n_{k_0}$ and $(\sigma_2,\ldots,\sigma_{2n})\in D_n$,
	\begin{equation*}
		\mu(J(1,\sigma_2,\ldots,1,\sigma_{2n}))\leq c_I\cdot\abs{J(1,\sigma_2,\ldots,1,\sigma_{2n})}^{t-\varepsilon}.
	\end{equation*}
\end{prop}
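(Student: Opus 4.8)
The plan is to bound the measure $\mu(J(1,\sigma_2,\ldots,1,\sigma_{2n}))$ by the length $\abs{J(1,\sigma_2,\ldots,1,\sigma_{2n})}$ raised to a power close to $s_B(\alpha)$, by splitting into the three cases according to the position of $n$ relative to the sequence $\{n_k\}$, exactly as the formulas \eqref{mu1}, \eqref{mu2}, \eqref{mu3} are organized. In each case I will compare the explicit product expression for $\mu$ with the explicit length formulas \eqref{c2} and \eqref{c3}. The key algebraic input is Lemma \ref{lem2} (sub/super-multiplicativity of the $q_{2m}$ across blocks) together with $q_n \geq 2^{n/2-1}$ and $q_n \leq \prod(a_k+1)$; these let me relate $q_{2n}^2$ for the full word to the product $\prod_j q_{2m_j}^2 \cdot \prod_j [B^{2n_j}]^2$ of the block denominators, up to a multiplicative constant that is controlled by a factor like $4^{k}$, which will be absorbed once $\{n_k\}$ grows fast enough.

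Concretely, first I would treat the anchor case $n = n_k$. Here \eqref{mu1} gives $\mu = \prod_{j=1}^k [B^{2n_j}]^{-1}(B^{2m_j}q_{2m_j}^2)^{-s_{m_j,B}(\alpha)}$, while the length $\abs{J}$ from \eqref{c3} is comparable to $[B^{2n_k}]^{-1} q_{2n_k}^{-2}$. Using Lemma \ref{lem2} to write $q_{2n_k}$ as a near-product of the block $q$'s and the large partial quotients $a_{2n_j}\asymp B^{2n_j}$, and using $s_{m_j,B}(\alpha)\to s_B(\alpha)$ (Lemma \ref{lem3}, Lemma \ref{editlem}) so that each exponent is within $\varepsilon$ of $s_B(\alpha)$ for $j\geq k_0$, I get $\mu \leq c_I \cdot \abs{J}^{t-\varepsilon}$ with $t = s_B(\alpha)-2\varepsilon$; the slack between $t-\varepsilon$ and $s_B(\alpha)$ is what swallows the $2^{n\varepsilon}$-type error coming from the $q_n\geq 2^{n/2-1}$ bound and the geometric constants. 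Then the case $n = n_k-1$ follows from the $n=n_{k-1}$ estimate by multiplying through by the single block factor $(B^{2m_k}q_{2m_k}^2)^{-s_{m_k,B}(\alpha)}$ and comparing with \eqref{c2}, noting the prefactor $[B^{2n_k}]$ in \eqref{mu2} is dominated once we pass to the length of the \emph{fundamental} interval which already sees the large quotient $a_{2n_k}$. Finally, for $n_{k-1}<n<n_k-1$, formula \eqref{mu3} expresses $\mu(J(1,\sigma_2,\ldots,1,\sigma_{2n}))$ as a sum over $\sigma_{2(n+1)},\ldots,\sigma_{2(n_k-1)}\leq\alpha$ of the $n=n_k-1$ quantities; I would bound this sum by pulling out the fixed initial part and recognizing the remaining sum as essentially $f_{m,B}(s_{m,B}(\alpha))\leq 1$-type sums over the intermediate digits, reducing it to the already-treated anchor estimate at level $n$ with the length \eqref{c2}.

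The main obstacle I expect is the bookkeeping of multiplicative constants: each application of Lemma \ref{lem2} across a block boundary costs a factor of $2$ (or $1/2$), so over $k$ blocks one accumulates $2^{\pm 2k}$, and similarly the passage between $q_{2n}^2$ and $\prod q_{2m_j}^2$ with the interspersed large quotients $[B^{2n_j}]$ must be done carefully so that the powers of $[B^{2n_j}]$ on the $\mu$ side and the $\abs{J}$ side match up to the allowed $\varepsilon$-loss in the exponent. The role of choosing $\{n_k\}$ growing sufficiently fast (and $k_0$ large) is precisely to guarantee two things simultaneously: that $m_k=n_k-n_{k-1}-1$ is large enough for $s_{m_k,B}(\alpha)$ to be within $\varepsilon$ of $s_B(\alpha)$, and that the tail contribution $2n_j$ of the large blocks dominates the accumulated constants $4^k$ and the $2^{n\varepsilon}$ factors, so that everything beyond the first $k_0$ blocks can be absorbed into $\abs{J}^{-\varepsilon}$ while the first $k_0$ blocks are lumped into the fixed constant $c_I$. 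Once the constant-tracking is set up, each of the three cases is a direct substitution-and-estimate.
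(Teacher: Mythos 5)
Your plan matches the paper's proof essentially step for step: the same three-case split according to whether $n = n_k$, $n = n_k - 1$, or $n_{k-1} < n < n_k - 1$, the same use of \eqref{mu1}--\eqref{mu3} against \eqref{c2}--\eqref{c3}, the same invocation of Lemma \ref{lem2} and $q_n \geq 2^{n/2-1}$ to relate $q_{2n_k}$ to the block products at the cost of a $2^{O(k)}$ factor, and the same strategy of choosing $\{n_k\}$ sparse and $k_0$ large so that $s_{m_j,B}(\alpha)$ is within $\varepsilon$ of $s_B(\alpha)$ for $j \geq k_0$ while the accumulated geometric constants and $2^{O(k)}$ are absorbed into the $\varepsilon$-loss in the exponent, with the first $k_0$ blocks lumped into $c_I$. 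The only place the paper is slightly more delicate than your sketch is in case (c), where bounding the residual sum $\sum_{\sigma_{2(n+1)},\ldots,\sigma_{2(n_k-1)}\leq\alpha}(\cdot)^{s_{m_k,B}(\alpha)}$ requires splitting into the sub-cases $l = n - n_{k-1} \leq N_0$ and $l > N_0$ so as to compare $s_{m_k,B}(\alpha)$ with $s_{l,B}(\alpha)$ via uniform continuity; but that is squarely within what you flagged as ``constant-tracking,'' so this is the same argument.
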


\begin{proof}
	
	Suppose we have fixed $\{n_k\}_{k\geq1}$. By our definitions, we assume $k_0$ is sufficiently large such that 
	\begin{align}
		\abs{s_B(\alpha)-s_{m_j,B}(\alpha)}&\leq\varepsilon,\quad j\geq k_0,	\\
		\frac{\log2}{2n_j(t+\varepsilon)\log B}+\frac{t}{t+\varepsilon}&\leq\frac{m_j}{n_j},\quad j\geq k_0.
	\end{align}
	By Remark \ref{rem1}, it is obviously that
	\begin{align*}
		&\ \prod_{j=1}^{k_0}B^{2(n_1+\cdots+n_j)}\alpha^{n_j}\left(\frac{1}{B^{2n_j}q_{2n_j}^2(1,\sigma_2,\ldots,1,\sigma_{2n_j})}\right)^t	\\
		\geq&\ \prod_{j=1}^{k_0}\sum_{(\sigma_{2},\ldots,\sigma_{2n_j})\in D_{n_j}}\left(\frac{1}{B^{2n_j}q_{2n_j}^2(1,\sigma_2,\ldots,1,\sigma_{2n_j})}\right)^t\geq1.
	\end{align*}
	We write $c_I=\prod_{j=1}^{k_0}B^{2(n_1+\cdots+n_j)}\alpha^{n_j}$. By lemma \ref{lem1} and \ref{lem2}, we have
	\begin{equation}\label{q2nk}
		\prod_{j=1}^{k}q_{2m_j}\geq\frac{1}{2^{2k}}\cdot\prod_{j=1}^{k}\frac{1}{\sigma_{2n_j}+1}\cdot q_{2n_k}\geq\frac{1}{2^{2k}}\cdot\prod_{j=1}^{k}\frac{1}{2[B^{2n_j}]+1}\cdot q_{2n_k}.
	\end{equation}
	If $n\ne n_k-1$ for any $k\geq1$, then
	\begin{equation*}
		\abs{J(1,\sigma_2,\ldots,1,\sigma_{2n})}=\frac{\alpha}{(q_{2n+1}+q_{2n})((\alpha+1)q_{2n+1}+q_{2n})}\geq\frac{\alpha}{3(2\alpha+3)q_{2n}^2},
	\end{equation*} If $n=n_k-1$ for some $k\geq1$, then
	\begin{align*}
		\abs{J(1,\sigma_2,\ldots,1,\sigma_{2n})}&=\frac{[B^{2n_k}]}{(([B^{2n_k}]+1)q_{2n+1}+q_{2n})((2[B^{2n_k}]+1)q_{2n+1}+q_{2n})}	\\
		&\geq\frac{[B^{2n_k}]}{(2[B^{2n_k}]+3)(4[B^{2n_k}]+3)q_{2n}^2}.
	\end{align*}
	
	\subsubsection*{\textbf{\emph{Case (a).}}} If $n=n_k$ for some $k\geq k_0$, then we assume the gap between $\{n_k\}$ is large enough.
	By (\ref{mu1}), we have
	\begin{align*}
		&\ \mu(J(1,\sigma_2,\ldots,1,\sigma_{2n_k}))	\\
		=&\ \prod_{j=1}^{k}\frac{1}{[B^{2n_j}]}\left(\frac{1}{B^{2m_j}q_{2m_j}^2}\right)^{s_{m_j,B}(\alpha)}	\\
		\leq&\ \prod_{j=1}^{k_0}\frac{1}{B^{2n_j}}\left(\frac{1}{B^{2m_j}q_{2m_j}^2}\right)^{s_{m_j,B}(\alpha)}\cdot\prod_{j=k_0+1}^{k}\frac{1}{B^{2n_j}}\left(\frac{1}{B^{2m_j}q_{2m_j}^2}\right)^{s_{m_j,B}(\alpha)}	\\
		\leq&\ c_I\prod_{j=1}^{k_0}\frac{1}{B^{2n_j}}\left(\frac{1}{B^{2n_j}q_{2n_j}^2}\right)^{t}\cdot\prod_{j=k_0+1}^{k}2\cdot\left(\frac{1}{B^{2m_j+2n_j}q_{2m_j}^2}\right)^{t+\varepsilon}	\\
		\leq&\ c_I\prod_{j=1}^{k_0}\left(\frac{1}{B^{4n_j}q_{2n_j}^2}\right)^{t}\cdot\prod_{j=k_0+1}^{k}\left(\frac{1}{B^{4n_j}q_{2m_j}^2}\right)^{t}	\\
		\leq&\ c_I\prod_{j=1}^{k}\left(\frac{1}{B^{4n_j}q_{2m_j}^2}\right)^{t}	\\
		\leq&\ c_I\cdot2^{6kt}\cdot\frac{1}{q_{2n_k}^{2t}}	\\
		\leq&\ c_I\cdot2^{6k}\cdot\left(\frac{3(2\alpha+3)}{\alpha}\right)^t\cdot\abs{J(1,\sigma_2,\ldots,1,\sigma_{2n_k})}^t	\\
		\leq&\ c_I\cdot\abs{J(1,\sigma_2,\ldots,1,\sigma_{2n_k})}^{t-\varepsilon}.
	\end{align*}
	
	\subsubsection*{\textbf{\emph{Case (b).}}} If $n=n_k-1$ for some $k>k_0$. Similarly, for any $\sigma_{2n_k}\in\mathbb{N}$ such that $(\sigma_{2},\ldots,\sigma_{2(n_k-1)},\sigma_{2n_k})\in D_{n_k}$, by (\ref{mu2}) we have
	\begin{align*}
		&\ \mu(J(1,\sigma_{2},\ldots,1,\sigma_{2(n_k-1)}))	\\
		=&\ [B^{2n_k}]\prod_{j=1}^{k}\frac{1}{[B^{2n_j}]}\left(\frac{1}{B^{2m_j}q_{2m_j}^2}\right)^{s_{m_j,B}(\alpha)}	\\
		\leq&\ B^{2n_k}\prod_{j=1}^{k}\frac{1}{B^{2n_j}}\left(\frac{1}{B^{2m_j}q_{2m_j}^2}\right)^{s_{m_j,B}(\alpha)}	\\
		=&\ \prod_{j=1}^{k-1}\frac{1}{B^{2n_j}}\left(\frac{1}{B^{2m_j}q_{2m_j}^2}\right)^{s_{m_j,B}(\alpha)}\cdot\left(\frac{1}{B^{2m_k}q_{2m_k}^2}\right)^{s_{m_k,B}(\alpha)}	\\
		\leq&\ c_I\cdot2^{6(k-1)t}\cdot\frac{1}{q_{2n_{k-1}}^{2t}}\cdot\frac{1}{B^{2m_k(t+\varepsilon)}}\cdot\frac{1}{q_{2m_k}^{2(t+\varepsilon)}}	\\
		\leq&\ c_I\cdot2^{6kt}\cdot\frac{1}{B^{2n_kt}}\cdot\frac{1}{q_{2(n_k-1)}^{2t}}	\\
		\leq&\ c_I\cdot2^{6kt}\cdot\frac{1}{B^{2n_kt}}\cdot\left(\frac{(2[B^{2n_k}]+3)(4[B^{2n_k}]+3)}{[B^{2n_k}]}\right)^t\cdot\abs{J(1,\sigma_2,\ldots,1,\sigma_{2n})}^t	\\
		\leq&\ c_I\cdot\abs{J(1,\sigma_2,\ldots,1,\sigma_{2n})}^{t-\varepsilon}.
	\end{align*}
	
	\subsubsection*{\textbf{\emph{Case (c).}}} If $n_{k-1}<n<n_k-1$ for some $k>k_0$. Then let $l=n-n_{k-1}$ and $l^\prime=n_k-n-1$. By the definition of $\mu$, and by (\ref{mu3}) we have
	\begin{align*}
		&\ \mu(J(1,\sigma_2,\ldots,1,\sigma_{2n}))	\\
		=&\ \sum_{\sigma_{2(n+1)},\ldots,\sigma_{2(n_k-1)}\leq\alpha}[B^{2n_k}]\prod_{j=1}^{k}\frac{1}{[B^{2n_j}]}\left(\frac{1}{B^{2m_j}q_{2m_j}^2}\right)^{s_{m_j,B}(\alpha)}	\\
		\leq&\ \prod_{j=1}^{k-1}\frac{1}{B^{2n_j}}\left(\frac{1}{B^{2m_j}q_{2m_j}^2}\right)^{s_{m_j,B}(\alpha)}\cdot\sum_{\sigma_{2(n+1)},\ldots,\sigma_{2(n_k-1)}\leq\alpha}\left(\frac{1}{B^{2m_k}q_{2m_k}^2}\right)^{s_{m_k,B}(\alpha)}	\\
		\leq&\ c_I\cdot2^{6k}\cdot\frac{1}{q_{2n_{k-1}}^{2t}}\cdot\frac{1}{q_{2l}^{2s_{m_k,B}(\alpha)}}\cdot\sum_{a_2,\ldots,a_{2l^\prime}\leq\alpha}\left(\frac{1}{B^{2m_k}q_{2l^\prime}^2}\right)^{s_{m_k,B}(\alpha)}	\\
		\leq&\ c_I\cdot2^{6k}\cdot\frac{1}{q_{2n}^{2t}}\cdot\sum_{a_2,\ldots,a_{2l^\prime}\leq\alpha}\left(\frac{1}{B^{2m_k}q_{2l^\prime}^2}\right)^{s_{m_k,B}(\alpha)}.
	\end{align*}
	By the definition of $s_{m_k,B}(\alpha)$, we have 
	\begin{align*}
		1&=\sum_{a_2,\ldots,a_{2m_k}\leq\alpha}\left(\frac{1}{B^{2m_k}q_{2m_k}^2}\right)^{s_{m_k,B}(\alpha)}	\\
		&\geq\frac{1}{2^{2s_{m_k,B}(\alpha)}}\cdot\sum_{a_2,\ldots,a_{2l}\leq\alpha}\left(\frac{1}{B^{2l}q_{2l}^2}\right)^{s_{m_k,B}(\alpha)}\cdot\sum_{a_{2},\ldots,a_{2l^\prime}\leq\alpha}\left(\frac{1}{B^{2l^\prime}q_{2l^\prime}^2}\right)^{s_{m_k,B}(\alpha)}.
	\end{align*}
	Choose $N_0\in\mathbb{N}$ such that	for any $m_k,l> N_0$, we have
	\begin{equation*}
		\abs{s_{m_k,B}(\alpha)-s_{l,B}(\alpha)}<\delta,\ \text{where}\ \delta=\frac{\log2\cdot\varepsilon}{2\log B+\log2+\log(\alpha+1)}.
	\end{equation*}
	If $l\leq N_0$, then we have 
	\begin{align*}
		&\ \mu(J(1,\sigma_2,\ldots,1,\sigma_{2n}))	\\
		\leq&\ c_I\cdot2^{6k}\cdot\frac{1}{q_{2n}^{2t}}\cdot2^{s_{m_k,B}(\alpha)}\cdot\left(\sum_{a_2,\ldots,a_{2l}\leq\alpha}\left(\frac{1}{B^{2l}q_{2l}^2}\right)^{s_{m_k,B}(\alpha)}\right)^{-1}	\\
		\leq&\ c_I\cdot2^{6k}\cdot\frac{1}{q_{2n}^{2t}}\cdot\frac{B^{2l}2^{2l}(\alpha+1)^{2l}}{\alpha^l}	\\
		\leq&\ c_I\cdot2^{6k}\cdot\frac{B^{2l}2^{2l}(\alpha+1)^{2l}}{\alpha^l}\cdot\left(\frac{3(2\alpha+3)}{\alpha}\right)^t\cdot\abs{J(1,\sigma_2,\ldots,1,\sigma_{2n})}^t	\\
		\leq&\ c_I\cdot\abs{J(1,\sigma_2,\ldots,1,\sigma_{2n})}^{t-\varepsilon}.
	\end{align*}
	If $l>N_0$, then we have 
	\begin{align*}
		&\ \mu(J(1,\sigma_2,\ldots,1,\sigma_{2n}))	\\
		\leq&\ c_I\cdot2^{6k}\cdot\frac{1}{q_{2n}^{2t}}\cdot2^{s_{m_k,B}(\alpha)}\cdot\left(\sum_{a_2,\ldots,a_{2l}\leq\alpha}\left(\frac{1}{B^{2l}q_{2l}^2}\right)^{s_{m_k,B}(\alpha)}\right)^{-1}	\\
		\leq&\ c_I\cdot2^{6k}\cdot\frac{1}{q_{2n}^{2t}}\cdot2^{s_{m_k,B}(\alpha)}\cdot\left(\sum_{a_2,\ldots,a_{2l}\leq\alpha}\left(\frac{1}{B^{2l}q_{2l}^2}\right)^{s_{m_l,B}(\alpha)+\delta}\right)^{-1}	\\
		\leq&\ c_I\cdot2^{6k}\cdot\frac{4}{q_{2n}^{2t}}\cdot(B^{2l}2^l(\alpha+1)^l)^\delta	\\
		\leq&\ c_I\cdot2^{6k}\cdot4\cdot(B^{2l}2^l(\alpha+1)^l)^\delta\cdot\left(\frac{3(2\alpha+3)}{\alpha}\right)^t\cdot\abs{J(1,\sigma_2,\ldots,1,\sigma_{2n})}^t	\\
		\leq&\ c_I\cdot\abs{J(1,\sigma_2,\ldots,1,\sigma_{2n})}^{t-\varepsilon}.
	\end{align*}
	To sum up, our desired result is proved.
\end{proof}

\subsection{Lower bound of Hausdorff dimension of $F(B)$}

In this subsection, we use the mass distribution principle to estimate the lower bound of $\dim_H F_\alpha(B)$, and then get a lower bound of $\dim_H F(B)$. We will proceed in two steps.

\subsubsection{Gaps between the adjoint fundamental intervals}

Given a fundamental interval $J(1,\sigma_2,\ldots,1,\sigma_{2n})$ where $(\sigma_{2},\ldots,\sigma_{2n})\in D_n$, denote $g^r(1,\sigma_{2},\ldots,1,\sigma_{2n})$ the distance between $J(1,\sigma_2,\ldots,1,\sigma_{2n})$ and the fundamental interval of order $2n$ which lies on the right of $J(1,\sigma_2,\ldots,1,\sigma_{2n})$ and is closest to it, $g^l(1,\sigma_{2},\ldots,1,\sigma_{2n})$ the distance between $J(1,\sigma_2,\ldots,1,\sigma_{2n})$ and the fundamental interval of order $2n$ which lies on the left of $J(1,\sigma_2,\ldots,1,\sigma_{2n})$ and is closest to it.

Let $\widetilde{g}(1,\sigma_{2},\ldots,1,\sigma_{2n})=\min\{g^r(1,\sigma_{2},\ldots,1,\sigma_{2n}),g^l(1,\sigma_{2},\ldots,1,\sigma_{2n})\}$.

\begin{prop}[Gaps in $F_\alpha(B)$]\label{gap}
	For any $B>1$ and $\alpha\geq2$, if $n\ne n_k-1$ for any $k>1$, we have
	\begin{equation}\label{gap1}
		\widetilde{g}(1,\sigma_{2},\ldots,1,\sigma_{2n})\geq\frac{2}{\alpha}\cdot\abs{J(1,\sigma_2,\ldots,1,\sigma_{2n})},
	\end{equation}
	and if $n=n_k-1$ for some $k>1$, we have 
	\begin{equation}\label{gap2}
		\widetilde{g}(1,\sigma_{2},\ldots,1,\sigma_{2n})\geq2\cdot\abs{J(1,\sigma_2,\ldots,1,\sigma_{2n})}.
	\end{equation}
\end{prop}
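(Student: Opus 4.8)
The plan is to locate the two neighbouring fundamental intervals of order $n$ explicitly and then compute each gap in closed form from the convergent recurrences, using the determinant identity $p_{2n-1}q_{2n}-p_{2n}q_{2n-1}=1$, which is (\ref{pre1}) at the even index $2n$.

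First I would settle the combinatorial picture. Since fundamental intervals of order $n$ are indexed by $(\sigma_2,\dots,\sigma_{2n})\in D_n$ and the position $2n$ is even, increasing $\sigma_{2n}$ by one moves into the adjacent cylinder $I(1,\sigma_2,\dots,1,\sigma_{2n}+1)$ on the right, and decreasing it moves left; no other fundamental interval of order $n$ can lie strictly between $J(1,\sigma_2,\dots,1,\sigma_{2n})$ and $J(1,\sigma_2,\dots,1,\sigma_{2n}\pm1)$ because each order-$2n$ cylinder carries exactly one of them. Thus, whenever $\sigma_{2n}\pm1$ is admissible the neighbour is exactly $J(1,\sigma_2,\dots,1,\sigma_{2n}\pm1)$; and if $\sigma_{2n}$ is at the edge of its admissible range the true neighbour lies beyond the endpoint of $I(1,\sigma_2,\dots,1,\sigma_{2n}\pm1)$ (which then contains no fundamental interval of order $n$), so the true gap is at least the one computed from the shifted value. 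Hence it suffices to treat the "interior" case, and the formula obtained there is a valid lower bound in all cases.

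Second, for the interior case I would write everything from the endpoint formulas. Abbreviating $P=p_{2n}$, $P'=p_{2n-1}$, $Q=q_{2n}$, $Q'=q_{2n-1}$ for the prefix $(1,\sigma_2,\dots,1,\sigma_{2n})$, one has $p_{2n+1}=P+P'$ and $q_{2n+1}=Q+Q'$ since $a_{2n+1}=1$, so the endpoints of $J(1,\sigma_2,\dots,1,\sigma_{2n})$ and of its neighbours are explicit ratios of integer combinations of $P,P',Q,Q'$ (obtained from (\ref{pre5}), just as (\ref{c2}) and (\ref{c3}) were). Subtracting two such ratios, the $PQ$, $P'Q'$ and $PQ'+P'Q$ parts cancel and one is left with a multiple of $P'Q-PQ'=1$. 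This gives, when $n\neq n_k-1$,
\begin{align*}
	g^r(1,\sigma_2,\dots,1,\sigma_{2n})&=\frac{\alpha+4}{(2q_{2n}+3q_{2n-1})\big((\alpha+2)q_{2n}+(\alpha+1)q_{2n-1}\big)},\\
	g^l(1,\sigma_2,\dots,1,\sigma_{2n})&=\frac{\alpha+4}{(2q_{2n}+q_{2n-1})\big((\alpha+2)q_{2n}-q_{2n-1}\big)},
\end{align*}
and, when $n=n_k-1$ with $M\coloneqq[B^{2n_k}]$, the same computation yields a numerator $2M^2+5M+4$ over the analogous products. Dividing by $\abs{J(1,\sigma_2,\dots,1,\sigma_{2n})}$ from (\ref{c2}) or (\ref{c3}) and using only the trivial bound $q_{2n-1}<q_{2n}$ (which gives $2q_{2n}+q_{2n-1}>\tfrac12(2q_{2n}+3q_{2n-1})$, and similarly for the other fractions) together with $\alpha\geq2$, I obtain $g^r/\abs{J}>\tfrac{\alpha+4}{2\alpha}\geq\tfrac2\alpha$ and $g^l/\abs{J}>\tfrac{\alpha+4}{\alpha}>\tfrac2\alpha$ in the first case, and $g^r/\abs{J},\,g^l/\abs{J}>\tfrac{2M^2+5M+4}{2M}\geq2$ in the second; taking the minimum proves (\ref{gap1}) and (\ref{gap2}).

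The only real obstacle is the bookkeeping in the first step: keeping track of the orientations — which side each neighbour sits on, and which endpoint of each successive cylinder the fundamental interval abuts, given that the parity flips at every level — and correctly reducing the boundary values of $\sigma_{2n}$ to the interior computation. Once the neighbour is pinned down, the determinant identity does all the work and the comparison with $\abs{J}$ is completely elementary; in particular no refined estimate on the growth of the $q_j$ is needed, only $q_{2n-1}<q_{2n}$.
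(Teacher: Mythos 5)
Your plan follows the paper's own approach: identify the two neighbouring fundamental intervals, compute the gaps $g^l,\,g^r$ exactly from the endpoint formulas via the determinant identity $p_{2n-1}q_{2n}-p_{2n}q_{2n-1}=1$, and compare with $\abs{J}$. Your closed-form expressions for $g^l,\,g^r$ in both cases match the paper's (including the numerator $2M^2+5M+4$), and your comparison using only $q_{2n-1}<q_{2n}$ yields the stated inequalities for $\alpha\geq2$ (your constants are actually a bit sharper than the paper's $1/3$).

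The one place where your argument is not quite airtight is the dismissal of the boundary values. For $\sigma_{2n}$ at the \emph{top} of its admissible range your reasoning is sound: $I(1,\sigma_2,\dots,1,\sigma_{2n}+1)$ is a genuine cylinder containing no fundamental interval of order $n$, so the next fundamental interval on the right is past it, and hence past where the hypothetical $J(1,\sigma_2,\dots,1,\sigma_{2n}+1)$ would sit. But for $\sigma_{2n}=1$ there is no cylinder $I(1,\sigma_2,\dots,1,0)$ at all, so ``the true neighbour lies beyond the endpoint of $I(1,\sigma_2,\dots,1,\sigma_{2n}-1)$'' has no meaning and the monotonicity reduction does not apply. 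The correct observation, which the paper makes explicitly, is that when $\sigma_{2n}=1$ the interval $J(1,\sigma_2,\dots,1,1)$ is the leftmost fundamental interval of order $n$ inside the order-$(2n-2)$ parent $I(1,\sigma_2,\dots,\sigma_{2(n-1)})$, so
\begin{equation*}
	g^l\;\geq\;[0;1,\sigma_2,\dots,\sigma_{2(n-1)},1,1]-[0;1,\sigma_2,\dots,\sigma_{2(n-1)}]\;=\;\frac{1}{(q_{2n-1}+q_{2n-2})\,q_{2n-2}},
\end{equation*}
which one then checks dominates the interior expression for $g^l$. Your final conclusion is still correct, since in the cases where $\widetilde{g}$ might actually be attained by this left gap the right gap is anyway smaller and gives the bound, and since the parent-cylinder estimate above does exceed the interior $g^l$; but as written the claim ``it suffices to treat the interior case'' needs this extra step to justify the $\sigma_{2n}=1$ situation.

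Apart from that, the combinatorial structure you rely on — that incrementing $\sigma_{2n}$ moves rightward because position $2n$ is even, that adjacent order-$2n$ cylinders leave no room for an interposed fundamental interval, and that $g^r<g^l$ so $\widetilde{g}=g^r$ in the interior case — is exactly what the paper uses, and your determinant computation and final comparison with $\abs{J}$ are correct.
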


\begin{proof}
	We fix a sequence of integers $\{n_k\}_{k\geq1}$ with gaps large enough. For $(\sigma_{2},\ldots,\sigma_{2n})\in D_n$, we estimate the gaps under several different conditions as follows.
	\subsubsection*{\textbf{\emph{Case (a).}}} If $n\ne n_k-1$ for any $k>1$, by the definition of fundamental intervals (\ref{c1}), we have \begin{equation*}
		J(1,\sigma_{2},\ldots,1,\sigma_{2n})=\bigcup_{1\leq\sigma_{2(n+1)}\leq\alpha}\textnormal{cl}I(1,\sigma_{2},\ldots,1,\sigma_{2n},1,\sigma_{2(n+1)}),
	\end{equation*}
	and by (\ref{c2}) we have 
	\begin{equation*}
		\abs{J(1,\sigma_{2},\ldots,1,\sigma_{2n})}=\frac{\alpha}{(q_{2n+1}+q_{2n})((\alpha+1)q_{2n+1}+q_{2n})}.
	\end{equation*}
	Note that $g^l(1,\sigma_{2},\ldots,1,\sigma_{2n})$ is the distance between the left endpoint of $J(1,\sigma_{2},\ldots,1,\sigma_{2n})$ and the right endpoint of $J(1,\sigma_{2},\ldots,1,\sigma_{2n}-1)$ (if it exists). Hence, 
	\begin{align*}
		g^l(1,\sigma_{2},\ldots,1,\sigma_{2n})&=[0;1,\sigma_{2},\ldots,1,\sigma_{2n},1,1]-[0;1,\sigma_{2},\ldots,1,\sigma_{2n}-1,1,\alpha+1]	\\
		&=\frac{2p_{2n}+p_{2n-1}}{2q_{2n}+q_{2n-1}}-\frac{(\alpha+2)p_{2n}-p_{2n-1}}{(\alpha+2)q_{2n}-q_{2n-1}}	\\
		&=\frac{\alpha+4}{(2q_{2n}+q_{2n-1})((\alpha+2)q_{2n}-q_{2n-1})}	\\
		&=\frac{\alpha+4}{(q_{2n+1}+q_{2n})((\alpha+2)q_{2n}-q_{2n-1})}.
	\end{align*}
	The value of $g^r(1,\sigma_{2},\ldots,1,\sigma_{2n})$ is the distance between the left endpoint of $J(1,\sigma_{2},\ldots,1,\sigma_{2n}+1)$ (if it exists) and the right endpoint of $J(1,\sigma_{2},\ldots,1,\sigma_{2n})$. Hence, 
	\begin{align*}
		g^r(1,\sigma_{2},\ldots,1,\sigma_{2n})&=[0;;1,\sigma_{2},\ldots,1,\sigma_{2n}+1,1,1]-[0;1,\sigma_{2},\ldots,1,\sigma_{2n},1,\alpha+1]	\\
		&=\frac{2p_{2n}+3p_{2n-1}}{2q_{2n}+3q_{2n-1}}-\frac{(\alpha+2)p_{2n}+(\alpha+1)p_{2n-1}}{(\alpha+2)q_{2n}+(\alpha+1)q_{2n-1}}	\\
		&=\frac{\alpha+4}{(2q_{2n}+3q_{2n-1})((\alpha+2)q_{2n}+(\alpha+1)q_{2n-1})}	\\
		&=\frac{\alpha+4}{(2q_{2n}+3q_{2n-1})((\alpha+1)q_{2n+1}+q_{2n})}.
	\end{align*}
	Thus, 
	\begin{align}
		\widetilde{g}(1,\sigma_{2},\ldots,1,\sigma_{2n})&=g^r(1,\sigma_{2},\ldots,1,\sigma_{2n})	\notag	\\
		&=\frac{\alpha+4}{(2q_{2n}+3q_{2n-1})((\alpha+1)q_{2n+1}+q_{2n})}	\notag	\\
		&=\frac{\alpha+4}{\alpha}\cdot\frac{2q_{2n}+q_{2n-1}}{2q_{2n}+3q_{2n-1}}\cdot\abs{J(1,\sigma_2,\ldots,1,\sigma_{2n})}	\notag	\\
		&\geq\frac{1}{3}\cdot\frac{\alpha+4}{\alpha}\abs{J(1,\sigma_2,\ldots,1,\sigma_{2n})}	\notag	\\
		&\geq\frac{2}{\alpha}\cdot\abs{J(1,\sigma_2,\ldots,1,\sigma_{2n})}.	\label{gapa1}
	\end{align}
	\subsubsection*{\textbf{\emph{Case (b).}}} If $n=n_k-1$ for some $k>1$, by the definition of fundamental intervals (\ref{c1}), we have
	\begin{equation*}
		J(1,\sigma_{2},\ldots,1,\sigma_{2n})=\bigcup_{[B^{2n_k}]+1\leq\sigma_{2(n+1)}\leq2[B^{2n_k}]}\textnormal{cl}I(1,\sigma_{2},\ldots,1,\sigma_{2n},1,\sigma_{2(n+1)}),
	\end{equation*}
	and by (\ref{c3}) we have 
	\begin{equation*}
		\abs{J(1,\sigma_{2},\ldots,1,\sigma_{2n})}=\frac{[B^{2n_k}]}{(([B^{2n_k}]+1)q_{2n+1}+1_{2n})((2[B^{2n_k}]+1)q_{2n+1}+q_{2n})}.
	\end{equation*}
	Note that $g^l(1,\sigma_{2},\ldots,1,\sigma_{2n})$ is the distance between the left endpoint of $J(1,\sigma_{2},\ldots,1,\sigma_{2n})$ and the right endpoint of $J(1,\sigma_{2},\ldots,1,\sigma_{2n}-1)$ (if it exists). Hence,
	\begin{align*}
		&\ g^l(1,\sigma_{2},\ldots,1,\sigma_{2n})	\\
		=&\ [0;1,\sigma_{2},\ldots,1,\sigma_{2n},1,[B^{2n_k}]+1]-[0;1,\sigma_{2},\ldots,1,\sigma_{2n}-1,1,2[B^{2n_k}]+1]	\\
		=&\ \frac{([B^{2n_k}]+2)p_{2n}+([B^{2n_k}]+1)p_{2n-1}}{([B^{2n_k}]+2)q_{2n}+([B^{2n_k}]+1)q_{2n-1}}-\frac{(2[B^{2n_k}]+2)p_{2n}-p_{2n-1}}{(2[B^{2n_k}]+2)q_{2n}-q_{2n-1}}	\\
		=&\ \frac{2[B^{2n_k}]^2+5[B^{2n_k}]+4}{(([B^{2n_k}]+2)q_{2n}+([B^{2n_k}]+1)q_{2n-1})((2[B^{2n_k}]+2)q_{2n}-q_{2n-1})}.
	\end{align*}
	Observe that the value of $g^r(1,\sigma_{2},\ldots,1,\sigma_{2n})$ is the distance between the left endpoint of $J(1,\sigma_{2},\ldots,1,\sigma_{2n}+1)$ (if it exists) and the right endpoint of $J(1,\sigma_{2},\ldots,1,\sigma_{2n})$. Hence, 
	\begin{align*}
		&g^r(1,\sigma_{2},\ldots,1,\sigma_{2n})	\\
		=&\ [0;1,\sigma_{2},\ldots,1,\sigma_{2n}+1,1,[B^{2n_k}]+1]-[0;1,\sigma_{2},\ldots,1,\sigma_{2n},1,2[B^{2n_k}]+1]	\\
		=&\ \frac{([B^{2n_k}]+2)p_{2n}+(2[B^{2n_k}]+3)p_{2n-1}}{([B^{2n_k}]+2)q_{2n}+(2[B^{2n_k}]+3)q_{2n-1}}-\frac{((2[B^{2n_k}]+2)p_{2n}+(2[B^{2n_k}]+1)p_{2n-1})}{((2[B^{2n_k}]+2)q_{2n}+(2[B^{2n_k}]+1)q_{2n-1})}	\\
		=&\ \frac{2[B^{2n_k}]^2+5[B^{2n_k}]+4}{(([B^{2n_k}]+2)q_{2n}+(2[B^{2n_k}]+3)q_{2n-1})((2[B^{2n_k}]+2)q_{2n}+(2[B^{2n_k}]+1)q_{2n-1})}	\\
		=&\ \frac{2[B^{2n_k}]^2+5[B^{2n_k}]+4}{(([B^{2n_k}]+2)q_{2n}+(2[B^{2n_k}]+3)q_{2n-1})((2[B^{2n_k}]+1)q_{2n+1}+q_{2n})}.
	\end{align*}
	Thus,
	\begin{align}
		&\ \widetilde{g}(1,\sigma_{2},\ldots,1,\sigma_{2n})=g^r(1,\sigma_{2},\ldots,1,\sigma_{2n})	\notag	\\
		=&\ \frac{2[B^{2n_k}]^2+5[B^{2n_k}]+4}{(([B^{2n_k}]+2)q_{2n}+(2[B^{2n_k}]+3)q_{2n-1})((2[B^{2n_k}]+1)q_{2n+1}+q_{2n})}	\notag	\\
		\geq&\ \frac{1}{3}\cdot\frac{2[B^{2n_k}]^2+5[B^{2n_k}]+4}{[B^{2n_k}]}\cdot\abs{J(1,\sigma_2,\ldots,1,\sigma_{2n})}	\notag	\\
		\geq&\ 2\cdot\abs{J(1,\sigma_{2},\ldots,1,\sigma_{2n})}.	\label{gapb1}
	\end{align}
	\subsubsection*{\textbf{\emph{Exceptional cases.}}} If $n\ne n_k-1$ and $\sigma_{2n}=[B^{2n}]+1$, we can extend our definition of fundamental intervals to $J(1,\sigma_{2},\ldots,1,\sigma_{2n},1,\sigma_{2n}-1)$. Following our calculation in previous cases, we immediately obtain 
	\begin{equation}\label{gape1}
		\widetilde{g}(1,\sigma_2,\ldots,1,\sigma_{2n})=g^r(1,\sigma_{2},\ldots,1,\sigma_{2n})=\frac{\alpha+4}{(2q_{2n}+3q_{2n-1})((\alpha+1)q_{2n+1}+q_{2n})}.
	\end{equation}
	
	If $n\ne n_k-1$ and $\sigma_{2n}=1$, $g^l(1,\sigma_2,\ldots,1,\sigma_{2n})$ is larger than the distance between the left endpoint of $I(1,\sigma_2,\ldots,1,\sigma_{2(n-1)},1)$ and the left endpoint of $I(1,\sigma_{2},\ldots,1,\sigma_{2(n-1)})$. Hence,
	\begin{align*}
		g^l(1,\sigma_2,\ldots,1,\sigma_{2n})&\geq[0;1,\sigma_{2},\ldots,1,\sigma_{2(n-1)},1,1]-[0;1,\sigma_{2},\ldots,1,\sigma_{2(n-1)}]	\\
		&=\frac{1}{(q_{2n-1}+q_{2n-2})q_{2n-2}},
	\end{align*}
	while 
	\begin{equation*}
		g^r(1,\sigma_2,\ldots,1,\sigma_{2n})=\frac{\alpha+4}{(2q_{2n}+3q_{2n-1})((\alpha+1)q_{2n+1}+q_{2n})}<g^l(1,\sigma_2,\ldots,1,\sigma_{2n})
	\end{equation*}
	Thus, we have 
	\begin{equation}\label{gape2}
		\widetilde{g}(1,\sigma_2,\ldots,1,\sigma_{2n})=\frac{\alpha+4}{(2q_{2n}+3q_{2n-1})((\alpha+1)q_{2n+1}+q_{2n})},
	\end{equation}
	which is exactly equal to $g^l(1,\sigma_2,\ldots,1,\sigma_{2n}+1)$ and strictly larger than $g^r(1,\sigma_2,\ldots,1,\sigma_{2n}+1)=\widetilde{g}(1,\sigma_2,\ldots,1,\sigma_{2n}+1)$.
	
	If $n\ne n_k-1$ and $\sigma_{2n}=2[B^{2n}]\ \text{or}\ \alpha$, $g^r(1,\sigma_2,\ldots,1,\sigma_{2n})$ is larger than the distance between left endpoint of $I(1,\sigma_2,\ldots,1,\sigma_{2(n-1)}+1,1)$ and the right endpoint of $I(1,\sigma_{2},\ldots,1,\sigma_{2(n-1)},1)$. Hence, 
	\begin{align*}
		g^r(1,\sigma_2,\ldots,1,\sigma_{2n})&\geq[0;1,\sigma_{2},\ldots,\sigma_{2(n-1)}+1,1,1]-[0;1,\sigma_{2},\ldots,\sigma_{2(n-1)},1]	\\
		&=\frac{1}{(2q_{2n}+3q_{2n-3})q_{2n-1}},
	\end{align*}
	while 
	\begin{equation*}
		g^l(1,\sigma_{2},\ldots,1,\sigma_{2n})=\frac{\alpha+4}{(2q_{2n}+q_{2n-1})((\alpha+2)q_{2n}-q_{2n-1})}<g^r(1,\sigma_{2},\ldots,1,\sigma_{2n}).
	\end{equation*}
	Thus, we have
	\begin{equation}\label{gape3}
		\widetilde{g}(1,\sigma_{2},\ldots,1,\sigma_{2n})=\frac{\alpha+4}{(2q_{2n}+q_{2n-1})((\alpha+2)q_{2n}-q_{2n-1})},
	\end{equation}
	which is exactly equal to $g^r(1,\sigma_{2},\ldots,1,\sigma_{2n}-1)$ and $\widetilde{g}(1,\sigma_{2},\ldots,1,\sigma_{2n}-1)$.
	
	If $n=n_k-1$ and $\sigma_{2n}=1$, $g^l(1,\sigma_{2},\ldots,1,\sigma_{2n})$ is larger than the distance between left endpoint of $I(1,\sigma_{2},\ldots,1,\sigma_{2(n-1)},1)$ and left endpoint of $I(1,\sigma_{2},\ldots,1,\sigma_{2(n-1)})$. Hence,
	\begin{align*}
		g^l(1,\sigma_{2},\ldots,1,\sigma_{2n})&\geq[0;1,\sigma_{2},\ldots,1,\sigma_{2(n-1)}+1,1,1]-[0;1,\sigma_{2},\ldots,1,\sigma_{2(n-1)}]	\\
		&=\frac{1}{(q_{2n-1}+q_{2n-2})q_{2n-2}},
	\end{align*}
	while 
	\begin{align*}
		g^r(1,\sigma_{2},\ldots,1,\sigma_{2n})&=\frac{2[B^{2n_k}]^2+5[B^{2n_k}]+4}{(([B^{2n_k}]+2)q_{2n}+(2[B^{2n_k}]+3)q_{2n-1})((2[B^{2n_k}]+1)q_{2n+1}+q_{2n})}	\\
		&<g^l(1,\sigma_{2},\ldots,1,\sigma_{2n}).
	\end{align*}
	Thus, we have 
	\begin{align}
		&\ \widetilde{g}(1,\sigma_{2},\ldots,1,\sigma_{2n})=g^r(1,\sigma_{2},\ldots,1,\sigma_{2n})	\notag	\\
		=&\ \frac{2[B^{2n_k}]^2+5[B^{2n_k}]+4}{(([B^{2n_k}]+2)q_{2n}+(2[B^{2n_k}]+3)q_{2n-1})((2[B^{2n_k}]+1)q_{2n+1}+q_{2n})},	\label{gape4}
	\end{align}
	which is exactly equal to $g^l(1,\sigma_2,\ldots,1,\sigma_{2n}+1)$ and strictly larger than $g^r(1,\sigma_2,\ldots,1,\sigma_{2n}+1)=\widetilde{g}(1,\sigma_2,\ldots,1,\sigma_{2n}+1)$.
	
	If $n=n_k-1$ and $\sigma_{2n}=\alpha$, $g^r(1,\sigma_2,\ldots,1,\sigma_{2n})$ is larger than the distance between left endpoint of $I(1,\sigma_2,\ldots,1,\sigma_{2(n-1)}+1,1)$ and the right endpoint of $I(1,\sigma_{2},\ldots,1,\sigma_{2(n-1)},1)$. Hence, 
	\begin{align*}
		g^r(1,\sigma_2,\ldots,1,\sigma_{2n})&\geq[0;1,\sigma_{2},\ldots,\sigma_{2(n-1)}+1,1,1]-[0;1,\sigma_{2},\ldots,\sigma_{2(n-1)},1]	\\
		&=\frac{1}{(2q_{2n}+3q_{2n-3})q_{2n-1}},
	\end{align*}
	while 
	\begin{align*}
		g^l(1,\sigma_2,\ldots,1,\sigma_{2n})&=\frac{2[B^{2n_k}]^2+5[B^{2n_k}]+4}{(([B^{2n_k}]+2)q_{2n}+([B^{2n_k}]+1)q_{2n-1})((2[B^{2n_k}]+2)q_{2n}-q_{2n-1})}	\\
		&\leq g^r(1,\sigma_2,\ldots,1,\sigma_{2n}).
	\end{align*}
	Thus, we have 
	\begin{align}
		&\ \widetilde{g}(1,\sigma_{2},\ldots,1,\sigma_{2n})=g^l(1,\sigma_{2},\ldots,1,\sigma_{2n})	\notag	\\
		=&\ \frac{2[B^{2n_k}]^2+5[B^{2n_k}]+4}{(([B^{2n_k}]+2)q_{2n}+([B^{2n_k}]+1)q_{2n-1})((2[B^{2n_k}]+2)q_{2n}-q_{2n-1})}	\label{gape5}
	\end{align}
	which is exactly equal to $g^r(1,\sigma_{2},\ldots,1,\sigma_{2n}-1)$ and $\widetilde{g}(1,\sigma_{2},\ldots,1,\sigma_{2n}-1)$.
	
	By (\ref{gapa1})-(\ref{gape5}), if $n\ne n_k-1$ for any $k>1$, we have
	\begin{equation}
		\widetilde{g}(1,\sigma_{2},\ldots,1,\sigma_{2n})\geq\frac{2}{\alpha}\cdot\abs{J(1,\sigma_2,\ldots,1,\sigma_{2n})},
	\end{equation}
	if $n=n_k-1$ for some $k>1$, we have 
	\begin{equation}
		\widetilde{g}(1,\sigma_{2},\ldots,1,\sigma_{2n})\geq2\cdot\abs{J(1,\sigma_2,\ldots,1,\sigma_{2n})},
	\end{equation}
	and therefore we finish our proof.
\end{proof}

\subsubsection{Calculation of the lower bound}

For any $n\geq1$ and $(\sigma_{2},\ldots,\sigma_{2n})\in D_n$, if $n\ne n_k-1$ for any $k>1$, we define 
\begin{align}
	g(1,\sigma_{2},\ldots,1,\sigma_{2n})&\coloneqq\frac{2}{\alpha}\cdot\abs{J(1,\sigma_{2},\ldots,1,\sigma_{2n})}	\notag	\\
	&=\frac{2}{(q_{2n+1}+q_{2n})((\alpha+1)q_{2n+1}+q_{2n})},	\label{G1}
\end{align}
and if $n=n_k-1$ for some $k>1$, we define
\begin{align}
	g(1,\sigma_{2},\ldots,1,\sigma_{2n})&\coloneqq2\cdot\abs{J(1,\sigma_{2},\ldots,1,\sigma_{2n})}	\notag	\\
	&=\frac{2[B^{2n_k}]}{(([B^{2n_k}]+1)q_{2n+1}+q_{2n})((2[B^{2n_k}]+1)q_{2n+1}+q_{2n})}.	\label{G2}
\end{align}
The notation $g$ gives a lower bound of the distance between fundamental intervals.

\begin{prop}[Lower bound of $\dim_H F_\alpha(B)$]\label{lower}
	Under the same assumption as Proposition \ref{estmu}, we have 
	\begin{equation*}
		\dim_H F_\alpha(B)\geq s_B(\alpha)-4\varepsilon.
	\end{equation*}
\end{prop}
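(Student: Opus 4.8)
The plan is to bound $\dim_H F_\alpha(B)$ from below through the mass distribution principle (Lemma \ref{dp}), applied to the probability measure $\mu$ constructed above, which is supported on $F_\alpha(B)$. Keep the notation of Proposition \ref{estmu}: $0<\varepsilon<1/4$, $t=s_B(\alpha)-2\varepsilon>0$, and $\{n_k\}$, $k_0$, $c_I$ are chosen so that $\mu(J(1,\sigma_2,\ldots,1,\sigma_{2n}))\le c_I\abs{J(1,\sigma_2,\ldots,1,\sigma_{2n})}^{t-\varepsilon}$ for all $n\ge n_{k_0}$ and all $(\sigma_2,\ldots,\sigma_{2n})\in D_n$. (If $s_B(\alpha)\le 4\varepsilon$ there is nothing to prove, so we may assume $t-\varepsilon=s_B(\alpha)-3\varepsilon>0$.) I shall produce a constant $C=C(\alpha,B,c_I)$, independent of scale, such that for every $x\in F_\alpha(B)$ and every sufficiently small $r>0$,
\begin{equation*}
	\mu(B(x,r))\le C\,r^{t-\varepsilon}.
\end{equation*}
Granting this, $\liminf_{r\to0}\frac{\log\mu(B(x,r))}{\log r}\ge t-\varepsilon=s_B(\alpha)-3\varepsilon\ge s_B(\alpha)-4\varepsilon$, and Lemma \ref{dp} yields the conclusion.

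To obtain the ball estimate, fix $x\in F_\alpha(B)$ and write $J_n(x)=J(1,\sigma_2(x),\ldots,1,\sigma_{2n}(x))$ for the fundamental interval of order $n$ containing $x$. Given small $r>0$, let $n=n(r)$ be the largest integer with $r<\widetilde g(J_n(x))$; since $\widetilde g(J_n(x))\to0$, this is well defined and $n(r)\to\infty$ as $r\to0$. By Proposition \ref{gap}, a ball of radius $r<\widetilde g(J_n(x))$ meets no fundamental interval of order $n$ other than $J_n(x)$, hence $B(x,r)\cap F_\alpha(B)\subseteq J_n(x)$ and $\mu(B(x,r))\le\mu(J_n(x))$. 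The whole point is therefore to control, for a given $r$, how much smaller $r$ may be than $\abs{J_{n(r)}(x)}$. If $n(r)$ and $n(r)+1$ are both ordinary indices (neither equal to any $n_k$), then Proposition \ref{gap} together with the size formulas (\ref{c2}) and (\ref{c3}) gives $r\asymp_\alpha\abs{J_{n(r)}(x)}$, whence $\mu(B(x,r))\le\mu(J_{n(r)}(x))\le c_I\abs{J_{n(r)}(x)}^{t-\varepsilon}\lesssim_\alpha c_I\,r^{t-\varepsilon}$ by Proposition \ref{estmu}; the same reasoning disposes of $n(r)=n_k-1$ with $r\gtrsim\abs{J_{n_k-1}(x)}$ and of $n(r)=n_k-2$ (where the admissible $r$ form only a bounded multiplicative window), since in both cases $\abs{J_{n(r)}(x)}\lesssim r$.

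The only delicate case is the transition regime $n(r)=n_k-1$ for some $k>k_0$ with $r\lesssim\abs{J_{n_k-1}(x)}$, where $J_{n_k-1}(x)$ is much larger than $r$. Here $J_{n_k-1}(x)$ has exactly $[B^{2n_k}]$ children (the fundamental intervals of order $n_k$ obtained by letting $\sigma_{2n_k}$ run over $\{[B^{2n_k}]+1,\ldots,2[B^{2n_k}]\}$), each of $\mu$-mass $\mu(J_{n_k-1}(x))/[B^{2n_k}]$ by the definition of $\mu$ and each of length comparable, within absolute constants, to $\abs{J_{n_k}(x)}\asymp\abs{J_{n_k-1}(x)}/[B^{2n_k}]$. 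By Proposition \ref{gap}, consecutive children are separated by a distance $\gtrsim_\alpha\abs{J_{n_k}(x)}$, so $B(x,r)$ meets at most $M\lesssim_\alpha r/\abs{J_{n_k}(x)}$ of them; hence, using the level-$(n_k-1)$ instance of Proposition \ref{estmu},
\begin{equation*}
	\mu(B(x,r))\le M\cdot\frac{\mu(J_{n_k-1}(x))}{[B^{2n_k}]}\lesssim_\alpha\frac{r}{[B^{2n_k}]\abs{J_{n_k}(x)}}\cdot c_I\abs{J_{n_k-1}(x)}^{t-\varepsilon}\asymp c_I\,r\abs{J_{n_k-1}(x)}^{t-1-\varepsilon}\lesssim c_I\,r^{t-\varepsilon},
\end{equation*}
the last step using $t-1-\varepsilon<0$ and $\abs{J_{n_k-1}(x)}\gtrsim r$. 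This exhausts all cases and proves the ball estimate, hence the proposition.

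The main obstacle is precisely this transition regime: a ball of radius $r$ can straddle a great many of the $[B^{2n_k}]$ children of a single fundamental interval that has itself shrunk by the enormous factor $[B^{2n_k}]$, so the trivial bound $\mu(B(x,r))\le\mu(J_{n(r)}(x))$ is hopelessly weak, and one must instead exploit the uniform splitting of the mass of $J_{n_k-1}(x)$ among its $[B^{2n_k}]$ children built into the construction of $\mu$, carefully matched with the sharp gap bounds of Proposition \ref{gap} and the \emph{parent-level} instance of Proposition \ref{estmu}. A secondary bookkeeping point is to verify that the finitely many $n<n_{k_0}$, the boundary exceptional cases in the proof of Proposition \ref{gap}, and the non-occurrence of consecutive special indices $n_k$ (ensured by taking the gaps $m_k$ large) do not affect the $\liminf$ as $r\to0$; none of these needs a new idea.
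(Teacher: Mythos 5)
Your proposal is correct and uses essentially the same strategy as the paper: apply the mass distribution principle via the measure $\mu$, using Proposition \ref{gap} to localize $B(x,r)$ inside a single fundamental interval $J_{n(r)}(x)$ and Proposition \ref{estmu} to bound $\mu(J_{n(r)}(x))$, with the delicate transition regime $n(r)=n_k-1$, $r\ll\abs{J_{n_k-1}(x)}$ handled by counting children and exploiting the uniform splitting $\mu(J_{n_k})=\mu(J_{n_k-1})/[B^{2n_k}]$. The paper folds the two sub-regimes of this transition (ball engulfs all children vs.\ ball meets only a few) into one computation via the interpolation $N=N^{t-\varepsilon}N^{1-(t-\varepsilon)}$ applied to the two bounds $N\le[B^{2n_k}]$ and $N\lesssim r[B^{2n_k}]^2q_{2n+1}^2$, whereas you split it by whether $r\gtrsim\abs{J_{n_k-1}(x)}$; these are equivalent. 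One small slip in your case enumeration: you describe case (i) as ``$n(r)$ and $n(r)+1$ neither equal to any $n_k$,'' which excludes $n(r)=n_k$ but includes $n(r)=n_k-2$ — yet $n(r)=n_k$ is never addressed while $n(r)=n_k-2$ is discussed separately. The relevant dichotomy for Proposition \ref{gap} is whether the index equals some $n_l-1$ (not $n_l$); with gaps $m_k$ large the argument you give for case (i) covers $n(r)=n_k$ verbatim, so nothing is actually missing, but the phrasing should be corrected to ``neither $n(r)$ nor $n(r)+1$ equals any $n_l-1$.''
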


\begin{proof}
	Take $r_0=\min_{1\leq j\leq n_{k_0}}\min_{(\sigma_{2},\ldots,\sigma_{2j})\in D_j}g(1,\sigma_{2},\ldots,1,\sigma_{2j})$. For any $x\in F_\alpha(B)$ and $0<r<r_0$, there exists a unique sequence $\sigma_{2},\sigma_{4},\ldots$ such that $x\in J(1,\sigma_{2},\ldots,1,\sigma_{2k})$ for all $k\geq1$ and for $n\geq n_{k_0}$,
	\begin{equation*}
		g(1,\sigma_{2},\ldots,1,\sigma_{2n},1,\sigma_{2(n+1)})\leq r<g(1,\sigma_{2},\ldots,1,\sigma_{2n}).
	\end{equation*}
	From the definition of $g(1,\sigma_{2},\ldots,1,\sigma_{2n})$, we immediately find that the ball $B(x,r)$ can intersect only one fundamental interval of order $n$, which is $J(1,\sigma_{2},\ldots,1,\sigma_{2n})$.
	
	\subsubsection*{\textbf{\emph{Case (a).}}} If $n=n_k-1$, for some $k \geq k_0$, we start with counting the number of the basic intervals that intersect with the ball $B(x,r)$. Since
	\begin{equation*}
		\abs{I(1,\sigma_2,\ldots,1,\sigma_{2(n+1)})}=\frac{1}{q_{2n+2}(q_{2n+2}+q_{2n+1})}\geq\frac{1}{12[B^{2n_k}]^2q_{2n+1}^2},
	\end{equation*}
	then the ball $B(x,r)$ can intersects at most $\min\{[B^{2n_k}],24r[B^{2n_k}]^2q_{2n+1}^2+2\}$ basic intervals of order $n+1$. We denote $N$ as the number of fundamental intervals of order $n+1$ contained in $J(1,\sigma_{2},\ldots,1,\sigma_{2n})$ that intersects the ball $B(x,r)$. Therefore, 
	\begin{align*}
		\mu(B(x,r))&\leq N\cdot\mu(I(1,\sigma_{2},\ldots,1,\sigma_{2(n+1)}))	\\
		&=N\cdot\mu(J(1,\sigma_{2},\ldots,1,\sigma_{2(n+1)}))	\\
		&=\frac{N}{[B^{2n_k}]}\cdot\mu(J(1,\sigma_{2},\ldots,1,\sigma_{2n}))	\\
		&\leq\frac{N}{[B^{2n_k}]}\cdot c_I\cdot\abs{J(1,\sigma_{2},\ldots,1,\sigma_{2n})}^{t-\varepsilon}	\\
		&=\frac{N}{[B^{2n_k}]}\cdot c_I\cdot\left(\frac{[B^{2n_k}]}{(([B^{2n_k}]+1)q_{2n+1}+q_{2n})((2[B^{2n_k}]+1)q_{2n+1}+q_{2n})}\right)^{t-\varepsilon}	\\
		&\leq\frac{N}{[B^{2n_k}]}\cdot c_I\cdot\left(\frac{1}{2[B^{2n_k}]q_{2n+1}^2}\right)^{t-\varepsilon}.
	\end{align*}
	Noticing that $r\geq g(1,\sigma_{2},\ldots,1,\sigma_{2(n+1)})$, we have $24r[B^{2n_k}]^2q_{2n+1}^2+2\leq (49\alpha+94)r[B^{2n_k}]^2q_{2n+1}^2$. Thus,
	\begin{align}
		\mu(B(x,r))&\leq\frac{N}{[B^{2n_k}]}\cdot c_I\cdot\left(\frac{1}{2[B^{2n_k}]q_{2n+1}^2}\right)^{t-\varepsilon}	\notag	\\
		&\leq\frac{((49\alpha+94)r[B^{2n_k}]^2q_{2n+1}^2)^{t-\varepsilon}[B^{2n_k}]^{1+\varepsilon-t}}{[B^{2n_k}]}\cdot c_I\cdot\left(\frac{1}{2[B^{2n_k}]q_{2n+1}^2}\right)^{t-\varepsilon}	\notag	\\
		&=2^{\varepsilon-t} \cdot c_I\cdot(49\alpha+94)^{t-\varepsilon}\cdot r^{t-\varepsilon}.	\label{low1}
	\end{align}
	
	\subsubsection*{\textbf{\emph{Case (b).}}} If $n\ne n_k-1$ for any $k>1$, we also start with counting the number of the basic intervals that intersect with the ball $B(x,r)$. Since 
	\begin{equation*}
		\abs{I(1,\sigma_2,\ldots,1,\sigma_{2(n+1)})}=\frac{1}{q_{2n+2}(q_{2n+2}+q_{2n+1})}\leq\frac{1}{4\alpha^2q_{2n+1}^2},
	\end{equation*}
	then the ball $B(x,r)$ intersects at most $\min\{\alpha,8r\alpha^2q_{2n+1}^2+2\}$ basic intervals of order $n+1$. We denote $N$ as the number of fundamental intervals of order $n+1$ contained in $J(1,\sigma_{2},\ldots,1,\sigma_{2n})$ that intersects the ball $B(x,r)$.
	
	If $n+2\ne n_k$ for any $k>1$, since $r\geq g(1,\sigma_{2},\ldots,1,\sigma_{2(n+1)})$, we have $8r\alpha^2q_{2n+1}^2+2\leq(2+\alpha)4r\alpha^2q_{2n+1}^2$. Therefore, 
	\begin{align}
		\mu(B(x,r))&\leq N\cdot\mu(I(1,\sigma_{2},\ldots,1,\sigma_{2(n+1)}))	\notag	\\
		&\leq N\cdot c_I\cdot\abs{J(1,\sigma_{2},\ldots,1,\sigma_{2(n+1)})}^{t-\varepsilon}	\notag	\\
		&=N\cdot c_I\cdot\left(\frac{\alpha}{(q_{2n+3}+q_{2n+2})((\alpha+1)q_{2n+3}+q_{2n+2})}\right)^{t-\varepsilon}	\notag	\\
		&\leq N\cdot c_I\cdot\left(\frac{\alpha}{9(\alpha+2)q_{2n+1}^2}\right)^{t-\varepsilon}	\notag	\\
		&\leq((2+\alpha)4r\alpha^2q_{2n+1}^2)^{t-\varepsilon}\cdot\alpha^{1+\varepsilon-t}\cdot c_I\cdot\left(\frac{\alpha}{9(\alpha+2)q_{2n+1}^2}\right)^{t-\varepsilon}	\notag	\\
		&=\left(\frac{4}{9}\right)^{t-\varepsilon}\cdot c_I\cdot \alpha^{1+2(t-\varepsilon)}\cdot r^{t-\varepsilon}.	\label{low2}
	\end{align}
	
	If $n+2=n_k$ for some $k\geq k_0$, since $r\geq g(1,\sigma_{2},\ldots,1,\sigma_{2(n+1)})$, we have $8r\alpha^2q_{2n+1}^2+2\leq112r\alpha^2[B^{2n_k}]q_{2n+1}^2$. Therefore,
	\begin{align}
		\mu(B(x,r))&\leq N\cdot\mu(I(1,\sigma_{2},\ldots,1,\sigma_{2(n+1)}))	\notag	\\
		&\leq N\cdot c_I\cdot\abs{J(1,\sigma_{2},\ldots,1,\sigma_{2(n+1)})}^{t-\varepsilon}	\notag	\\
		&=N\cdot c_I\cdot\left(\frac{[B^{2n_k}]}{(([B^{2n_k}]+1)q_{2n+3}+q_{2n+2})((2[B^{2n_k}]+1)q_{2n+3}+q_{2n+2})}\right)	\notag	\\
		&\leq N\cdot c_I\cdot\left(\frac{1}{12[B^{2n_k}]q_{2n+1}^2}\right)^{t-\varepsilon}	\notag	\\
		&\leq(112r\alpha^2[B^{2n_k}]q_{2n+1}^2)^{t-\varepsilon}\cdot\alpha^{1+\varepsilon-t}\cdot c_I\cdot\left(\frac{1}{12[B^{2n_k}]q_{2n+1}^2}\right)^{t-\varepsilon}	\notag	\\
		&=\left(\frac{28}{3}\right)^{t-\varepsilon}\cdot c_I\cdot\alpha^{1+t-\varepsilon}\cdot r^{t-\varepsilon}.	\label{low3}
	\end{align}
	
	By (\ref{low1})-(\ref{low3}) and the mass distribution principle (see Lemma \ref{dp}), we have
	\begin{equation*}
		\liminf_{r\rightarrow 0}\frac{\log\mu(B(x,r))}{\log r}\geq t-2\varepsilon,
	\end{equation*}
	which immediately follows $\dim_H F_\alpha(B)\geq t-2\varepsilon=s_B(\alpha)-4\varepsilon$.
\end{proof}

Now we can finish the proof of Theorem \ref{main3}.

\begin{proof}[Proof of Theorem \ref{main3}]
	By Proposition \ref{upper}, we have $\dim_H F(B)\leq s_B$. Thus we only need to show that $\dim_H F(B)\geq s_B$. Since $\varepsilon>0$ is arbitrary, we have 
	\begin{equation*}
		\dim_H F(B)\geq\dim_H F_\alpha(B)\geq s_B(\alpha).
	\end{equation*}
	By Lemma \ref{editlem}, we have $\dim_H F(B)=s_B$.
\end{proof}

\begin{rem}\label{rem2}
	During the process of proving our theorem, to some degree, the choice of $\{n_k\}_{k\geq1}$ is relatively free. Let $\mathcal{L}\subset\mathbb{N}$ be an infinite set. Then we also have 
	\begin{equation*}
		\dim_H\{x\in I_{even}\colon a_{2n}(x)\geq B^{2n}\ \text{i.m.}\ n\in\mathcal{L}\}=s_B.
	\end{equation*}
	To prove this claim, we only need to restrict our choice of $n_k$ in $\mathcal{L}$.
\end{rem}

\section{Hausdorff dimension of $E(b,c)$ and $\widetilde{E}(b,c)$}\label{6}

In this section, we focus on the sets
\begin{equation*}
	E(b,c)\coloneqq\{x\in I_{even}\colon a_{2n}(x)\geq c^{b^{2n}}\ \text{i.m.}\ n\}\quad (b,c>1)
\end{equation*}
and 
\begin{equation*}
	\widetilde{E}(b,c)\coloneqq\{x\in I_{even}\colon a_{2n}(x)\geq c^{b^{2n}}\ \text{for all}\ n\geq1\}\quad (b,c>1).
\end{equation*}

\subsection{Upper bound of the Hausdorff dimension of $E(b,c)$}

To estimate the upper bound of $\dim_H E(b,c)$, we construct a series of covers for the target set. Fix $1<d<b$ such that $\alpha>1/(1+d^2)>1/(1+b^2)$. We define two kind of sets
\begin{align*}
	I_{n,x}&\coloneqq B\left(\frac{p_{2n+1}(x)}{q_{2n+1}(x)},q_{2n}(x)^{-(1+d^2)}\right),	\\
	J_{n,x}&\coloneqq B\left(\frac{p_{2n+1}(x)}{q_{2n+1}(x)},q_{2n}(x)^{-2(1+d^2)}\right).
\end{align*}
For $q=1,2,\ldots$, we define
\begin{align*}
	\mathscr{I}_q&\coloneqq\{I_{n,x}\colon q_{2n}(x)=q\geq c^{d^{2n}/3}\},	\\
	\mathscr{J}_q&\coloneqq\{J_{n,x}\colon q_{2n}(x)=q\}.
\end{align*}
Before we build up our cover, we give a lemma first.
\begin{lem}\label{qd2}
	If $x\in E(b,c)$, then for any $1<d<b$ we have
	\begin{equation*}
		q_{2n+2}(x)>\max\{q_{2n}(x)^{d^2},c^{d^{2n+2}}\}
	\end{equation*}
	for infinitely many $n$.
\end{lem}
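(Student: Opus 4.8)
The plan is to establish the two required lower bounds on $q_{2n+2}$ simultaneously, by a contradiction argument applied to the ``growth set'' $T:=\{n\ge 1:q_{2n+2}(x)>q_{2n}(x)^{d^2}\}$ together with a last-occurrence selection. The only consequence of $x\in E(b,c)$ that I shall use is this: setting $S:=\{m\ge 1:a_{2m}(x)\ge c^{b^{2m}}\}$, the set $S$ is infinite, and for $m\in S$ the recursion $q_{2m}=a_{2m}q_{2m-1}+q_{2m-2}$ already gives $q_{2m}(x)\ge a_{2m}(x)\ge c^{b^{2m}}>c^{d^{2m}}$, the last step since $b>d$. From here on everything concerns only the sequence $\{q_{2n}(x)\}_{n\ge 0}$, so I drop $x$ from the notation and use that $q_{2n}\ge 2$ for $n\ge 1$.

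First I would prove that $T$ is infinite. If not, there is some $N_0\ge 1$ with $q_{2n+2}\le q_{2n}^{d^2}$ for all $n\ge N_0$, and iterating this relation gives $\log q_{2m}\le d^{2(m-N_0)}\log q_{2N_0}$ for every $m\ge N_0$. But for $m\in S$ one has $\log q_{2m}\ge b^{2m}\log c$, and infinitely many such $m$ exceed $N_0$; comparing the two bounds yields $(b/d)^{2m}\le (\log q_{2N_0})/(d^{2N_0}\log c)$, a constant, which is impossible as $m\to\infty$ because $b>d$.

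Next, for each sufficiently large $m\in S$ I would put $n(m):=\max\{n<m:n\in T\}$, which is well defined once $m$ exceeds the least element of $T$; since $T$ is infinite, $n(m)\to\infty$ as $m\to\infty$ along $S$. By the definition of $T$ we have $q_{2n(m)+2}>q_{2n(m)}^{d^2}$, the first desired inequality. For the second, every $j$ with $n(m)<j<m$ lies outside $T$, so $q_{2j+2}\le q_{2j}^{d^2}$, and iterating from $j=n(m)+1$ to $j=m-1$ gives $\log q_{2m}\le d^{2(m-n(m)-1)}\log q_{2n(m)+2}$; combined with $\log q_{2m}\ge b^{2m}\log c$ this forces
\begin{equation*}
	\log q_{2n(m)+2}\ \ge\ \frac{b^{2m}\log c}{d^{2(m-n(m)-1)}}\ >\ \frac{d^{2m}\log c}{d^{2(m-n(m)-1)}}\ =\ d^{2(n(m)+1)}\log c,
\end{equation*}
where the strict inequality again uses $b>d$. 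Hence $q_{2n(m)+2}>c^{d^{2n(m)+2}}$, and as $m$ runs over $S$ the indices $n=n(m)$ are distinct and infinite in number, proving the lemma.

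The essential point — and the reason for choosing $n(m)$ to be the \emph{last} growth step below $m$ rather than simply $n=m-1$ — is that both inequalities must hold at one and the same $n$; a direct approach to $q_{2n+2}>q_{2n}^{d^2}$ would need an upper bound on $q_{2n}$, which is unavailable since partial quotients at indices below $2n$ can be arbitrarily large. The last-occurrence choice turns the failure of the first inequality throughout $(n(m),m)$ into exactly the contraction estimate for $\log q_{2m}$, and the hypothesis $b>d$ is precisely what makes the threshold $d^{2(n(m)+1)}\log c$ fall below $\log q_{2n(m)+2}$. Nothing here is computationally demanding; the only care needed is the exponent bookkeeping in the two iterations and the degenerate case $n(m)=m-1$, where the iteration is vacuous and the second bound follows at once from $q_{2m}\ge c^{b^{2m}}$.
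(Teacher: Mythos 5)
Your proof is correct and organizes the argument quite differently from the paper. The paper assumes for contradiction that $q_{2n+2}\leq\max\{q_{2n}^{d^2},c^{d^{2n+2}}\}$ for all $n\geq N$ and iterates; because the hypothesis involves a $\max$, the careful conclusion is $q_{2n+2}\leq\max\{q_{2N}^{d^{2(n+1-N)}},c^{d^{2(n+1)}}\}$, and either branch eventually contradicts $q_{2m}\geq a_{2m}\geq c^{b^{2m}}$ along the exceptional set, using $b>d$; as printed the paper only writes the first branch (with a stray exponent $d^{2s}$), so its iteration is imprecise though repairable. You instead decouple the two inequalities: first you show $T=\{n:q_{2n+2}>q_{2n}^{d^2}\}$ is infinite by the one-branch version of that contradiction, whose iteration is immediate, and then for $m\in S$ you pass to the last exit $n(m)=\max(T\cap[1,m))$, turning the failure of the $T$-inequality throughout $(n(m),m)$ into $\log q_{2m}\leq d^{2(m-n(m)-1)}\log q_{2n(m)+2}$; combined with $\log q_{2m}\geq b^{2m}\log c$ and $b>d$ this forces the second inequality to hold at $n(m)$ as well. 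This last-exit decomposition sidesteps the $\max$-iteration entirely and produces explicit witness indices, which is tidier. One tiny overstatement: the $n(m)$ need not be \emph{distinct}, since two $m\in S$ lying in the same gap of $T$ share the same last exit; but you also observe $n(m)\to\infty$, which is all that is actually needed for ``infinitely many $n$''.
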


\begin{proof}
	By contrary, assume that there exists an $N\in\mathbb{N}$, such that for all $n\geq N$, $q_{2n+2}(x)\leq\max\{q_{2n}(x)^{d^2},c^{d^{2n+2}}\}$. By Lemma (\ref{lem1}), we have 
	\begin{equation*}
		a_{2n+2}(x)q_{2n}(x)\leq q_{2n+2}(x)\leq q_{2n}(x)^{d^{2s}}\leq \cdots\leq q_{2N}(x)^{d^{2(n+1-N)}}.
	\end{equation*}
	By the definition of set $E(b,c)$, we can always take $n$ large enough that 
	\begin{equation*}
		c^{b^{2n+2}}\leq a_{2n+2}(x)\leq q_{2N}(x)^{d^{2(n+1-N)}}.
	\end{equation*}
	It follows that 
	\begin{equation*}
		2(n+1)\log b+\log\log c\leq 2(n+1-N)\log d+\log\log q_{2N}(x).
	\end{equation*}
	This contradicts to the fact $b>d$. Thus, the proof is finished.
\end{proof}

\begin{prop}\label{cebc}
	For each $m=1,2,\ldots$, the family $\bigcup_{q=m}^\infty\mathscr{I}_q\cup\bigcup_{q=m}^\infty\mathscr{J}_q$ covers $E(b,c)$.
\end{prop}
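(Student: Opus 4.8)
The plan is to show that each $x\in E(b,c)$ lies in a single ball from the family, by locating one suitable level $n$. Fix $x\in E(b,c)$ and $m\geq 1$. Since $x$ has infinitely many partial quotients it is irrational, so $q_{2n}(x)\to\infty$ and the classical estimate $\abs{x-p_{2n+1}(x)/q_{2n+1}(x)}<1/(q_{2n+1}(x)q_{2n+2}(x))$ holds for every $n$; moreover, since $a_{2n+1}(x)=1$ for $x\in I_{even}$, we have $q_{2n+1}(x)=q_{2n}(x)+q_{2n-1}(x)>q_{2n}(x)$. By Lemma~\ref{qd2}, applied with the fixed $d\in(1,b)$, there are infinitely many $n$ with $q_{2n+2}(x)>\max\{q_{2n}(x)^{d^2},\,c^{d^{2n+2}}\}$; because $q_{2n}(x)\to\infty$ I may then choose one such $n$ with in addition $q_{2n}(x)\geq m$.

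For this $n$ I distinguish two cases. If $q_{2n}(x)\geq c^{d^{2n}/3}$, then $q_{2n+1}(x)q_{2n+2}(x)>q_{2n}(x)\cdot q_{2n}(x)^{d^2}=q_{2n}(x)^{1+d^2}$, hence $\abs{x-p_{2n+1}(x)/q_{2n+1}(x)}<q_{2n}(x)^{-(1+d^2)}$, i.e.\ $x\in I_{n,x}$; and the case hypothesis is precisely the defining condition $q_{2n}(x)\geq c^{d^{2n}/3}$ for $I_{n,x}\in\mathscr{I}_{q_{2n}(x)}$. If instead $q_{2n}(x)<c^{d^{2n}/3}$, then $q_{2n}(x)^{1+2d^2}<c^{d^{2n}(1+2d^2)/3}\leq c^{d^{2n+2}}<q_{2n+2}(x)$, where the middle inequality is just $3d^2\geq 1+2d^2$, i.e.\ $d^2\geq 1$; combining with $q_{2n+1}(x)>q_{2n}(x)$ yields $q_{2n+1}(x)q_{2n+2}(x)>q_{2n}(x)^{2(1+d^2)}$, so $\abs{x-p_{2n+1}(x)/q_{2n+1}(x)}<q_{2n}(x)^{-2(1+d^2)}$, i.e.\ $x\in J_{n,x}\in\mathscr{J}_{q_{2n}(x)}$. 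In either case $q_{2n}(x)\geq m$, so $x$ belongs to a member of $\bigcup_{q=m}^{\infty}\mathscr{I}_q\cup\bigcup_{q=m}^{\infty}\mathscr{J}_q$, which is the assertion.

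There is no real obstacle here: the content is the elementary chain of exponent inequalities together with the convergent bound, and the single point deserving a word of care is that the level $n$ can be chosen so as to satisfy simultaneously the conclusion of Lemma~\ref{qd2} and $q_{2n}(x)\geq m$, which follows immediately from $q_{2n}(x)\to\infty$ and the infinitude of good indices supplied by Lemma~\ref{qd2}.
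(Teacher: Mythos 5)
Your proof is correct and follows essentially the same route as the paper's: locate (via Lemma~\ref{qd2}) an index $n$ with $q_{2n}(x)\geq m$ and $q_{2n+2}(x)>\max\{q_{2n}(x)^{d^2},c^{d^{2n+2}}\}$, then split into the two cases $q_{2n}(x)\geq c^{d^{2n}/3}$ (landing in some $\mathscr{I}_q$) and $q_{2n}(x)<c^{d^{2n}/3}$ (landing in some $\mathscr{J}_q$), using the convergent bound in both. The only differences from the paper are cosmetic: you spell out the verification of $d^{2n}(1+2d^2)/3\leq d^{2n+2}$ and of the simultaneous choice of $n$, which the paper leaves implicit.
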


\begin{proof}
	Notice that \begin{equation*}
		\abs{x-\frac{p_{2n+1}(x)}{q_{2n+1}(x)}}\leq\frac{1}{q_{2n+1}(x)q_{2n+2}(x)}\leq\frac{1}{q_{2n}(x)^{1+d^2}}
	\end{equation*} holds for those $n$ satisfying $q_{2n+2}(x)>\max\{q_{2n}(x)^{d^2},c^{d^{2n+2}}\}$. Thus, we have $x\in I_{n,x}$ and $I_{n,x}\in\bigcup_{q=m}^\infty\mathscr{I}_q$ provided that $q_{2n+1}(x)\geq q_{2n}(x)\geq\max\{m,c^{d^{2n}/3}\}$.
	
	If $q_{2n}(x)<c^{d^{2n}/3}$, then since $q_{2n+2}(x)>c^{d^{2n+2}}>c^{d^{2n}\cdot(1+2d^2)/3}>q_{2n}(x)^{1+2d^2}$, we have 
	\begin{equation*}
		\abs{x-\frac{p_{2n+1}(x)}{q_{2n+1}(x)}}\leq\frac{1}{q_{2n+1}(x)q_{2n+2}(x)}\leq\frac{1}{q_{2n}(x)^{2(1+d^2)}},
	\end{equation*}
	which shows that $x\in J_{n,x}$ and $J_{n,x}\in\bigcup_{q=m}^\infty\mathscr{J}_q$. Therefore, we finish our proof.
\end{proof}

To estimate the number of intervals occurring in our construction, we need a lemma from \L uczak \cite{MR1464375}.

\begin{lem}[\L uczak \cite{MR1464375}]\label{smk}
	Let $k$ and $m$ be natural numbers and let $S(m,k)$ denote the number of sequences $a_1,a_2,\ldots,a_n$ of natural numbers such that $1\leq n\leq k$ and $\prod_{i=1}^{n}a_i\leq m$. Then $S(m,k)\leq m(2+\log m)^{k-1}$.
\end{lem}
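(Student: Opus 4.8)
The plan is to argue by induction on $k$, peeling off the first entry of each sequence. The base case $k=1$ is immediate: a sequence counted by $S(m,1)$ is a single natural number $a_1\le m$, so $S(m,1)=m=m(2+\log m)^{0}$.

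For the inductive step, fix $k\ge 2$ and assume the bound for $k-1$. I would classify the sequences $(a_1,\dots,a_n)$ counted by $S(m,k)$ according to $a_1=j$; since every term is at least $1$ and $\prod_i a_i\le m$, necessarily $1\le j\le m$. For each fixed $j$, either $n=1$, contributing a single sequence, or $n\ge 2$, in which case $(a_2,\dots,a_n)$ has length between $1$ and $k-1$ and product at most $m/j$, hence at most $\lfloor m/j\rfloor$. This gives
\begin{equation*}
	S(m,k)\le\sum_{j=1}^{m}\bigl(1+S(\lfloor m/j\rfloor,\,k-1)\bigr)=m+\sum_{j=1}^{m}S(\lfloor m/j\rfloor,\,k-1).
\end{equation*}

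Next I would insert the inductive hypothesis together with the bound $S(\lfloor m/j\rfloor,k-1)\le\lfloor m/j\rfloor(2+\log m)^{k-2}$ (valid because $\lfloor m/j\rfloor\le m$ and $k-2\ge 0$), and then use the harmonic estimate $\sum_{j=1}^{m}1/j\le 1+\log m$ to obtain
\begin{equation*}
	S(m,k)\le m+(2+\log m)^{k-2}\sum_{j=1}^{m}\frac{m}{j}\le m+m(1+\log m)(2+\log m)^{k-2}.
\end{equation*}
The induction then closes provided $1+(1+\log m)(2+\log m)^{k-2}\le(2+\log m)^{k-1}$, which after subtracting $(1+\log m)(2+\log m)^{k-2}$ from both sides is equivalent to $1\le(2+\log m)^{k-2}$; this holds since $m\ge 1$ forces $2+\log m\ge 2$ and $k-2\ge 0$.

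This is a routine induction rather than a deep argument, so I do not expect a genuine obstacle. The only points needing attention are the bookkeeping in the recursion — keeping the ``$+1$'' coming from length-one continuations and replacing $m/j$ by its integer part — and verifying that the constants close exactly, which is precisely where the extra unit in $2+\log m$ (as opposed to $1+\log m$) is used.
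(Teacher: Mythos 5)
Your induction is correct, and it is worth noting that the paper itself does not prove this lemma: it cites it verbatim from \L uczak's original article, so there is no in-paper argument to compare against. Your proof is nevertheless clean and self-contained. The base case $S(m,1)=m$ is right. The recursion obtained by conditioning on $a_1=j$, namely $S(m,k)\le\sum_{j=1}^{m}\bigl(1+S(\lfloor m/j\rfloor,k-1)\bigr)$, is sound (indeed it is an equality, since $\lfloor m/j\rfloor\ge1$ for all $1\le j\le m$). The monotonicity step $\lfloor m/j\rfloor(2+\log\lfloor m/j\rfloor)^{k-2}\le\lfloor m/j\rfloor(2+\log m)^{k-2}$ is a valid use of the inductive hypothesis, the harmonic estimate $\sum_{j\le m}1/j\le 1+\log m$ is standard, and the closing inequality $1+(1+\log m)(2+\log m)^{k-2}\le(2+\log m)^{k-1}$ reduces, after cancellation, to $1\le(2+\log m)^{k-2}$, which holds for all $m\ge1$ and $k\ge2$. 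You have also correctly identified why the constant is $2+\log m$ rather than $1+\log m$: the extra unit is precisely what absorbs the ``$+1$'' from the length-one continuations at each stage. This is almost certainly the same induction underlying \L uczak's original proof.
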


\begin{prop}[Upper bound of $\dim_H E(b,c)$]
	For $b,c>1$, we have $\dim_H E(b,c)\leq\dfrac{1}{1+b^2}$.
\end{prop}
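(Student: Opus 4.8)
The plan is to show that $\mathcal{H}^s(E(b,c))=0$ for every $s>\tfrac{1}{1+b^2}$, which yields $\dim_H E(b,c)\le\tfrac{1}{1+b^2}$. Fix such an $s$. Since $d\mapsto\tfrac{1}{1+d^2}$ is continuous and $d\in(1,b)$ was at our disposal with $\tfrac1{1+d^2}\to\tfrac1{1+b^2}$ as $d\uparrow b$, I may assume $d$ was chosen close enough to $b$ that already $s>\tfrac1{1+d^2}$, i.e.\ $s(1+d^2)>1$. By Proposition~\ref{cebc}, for each $m\ge1$ the family $\bigcup_{q\ge m}\mathscr{I}_q\cup\bigcup_{q\ge m}\mathscr{J}_q$ covers $E(b,c)$, and every member has diameter at most $2m^{-(1+d^2)}\to0$. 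Hence $\mathcal{H}^s(E(b,c))\le\liminf_{m\to\infty}\bigl(\Sigma^{\mathscr I}_m+\Sigma^{\mathscr J}_m\bigr)$ with
\[
\Sigma^{\mathscr I}_m=\sum_{q\ge m}\sum_{I\in\mathscr{I}_q}|I|^s,\qquad \Sigma^{\mathscr J}_m=\sum_{q\ge m}\sum_{J\in\mathscr{J}_q}|J|^s,
\]
and I will show each $\liminf$ is $0$ by grouping the values of $q$ into dyadic blocks $2^{\ell}\le q<2^{\ell+1}$. The crucial point of the dyadic grouping is that, instead of needing a $q$-term that beats $q^{-1}$ (which the counts cannot supply), we only need each block sum to decay geometrically in $\ell$.

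For $\Sigma^{\mathscr I}_m$ I use Lemma~\ref{smk}. An element of $\mathscr{I}_q$ is the ball $I_{n,x}$ attached to a cylinder $(1,\sigma_2,\dots,1,\sigma_{2n})$ with $q_{2n}=q\ge c^{d^{2n}/3}$; the latter forces $2n\le(\log d)^{-1}\log\!\bigl(3\log q/\log c\bigr)$, so if $q<2^{\ell+1}$ then $n\le\kappa_\ell$ with $\kappa_\ell=O(\log\ell)$. Since $\prod_{i=1}^{n}\sigma_{2i}\le q_{2n}<2^{\ell+1}$, Lemma~\ref{smk} bounds the number of such cylinders in the block by $S(2^{\ell+1},\kappa_\ell)\le 2^{\ell+1}(2+\ell)^{\kappa_\ell}=2^{\ell+1}e^{O((\log\ell)^2)}$, while $|I_{n,x}|\le 2\cdot2^{-\ell(1+d^2)}$. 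The block therefore contributes at most $2^{O(1)}e^{O((\log\ell)^2)}\,2^{\ell(1-s(1+d^2))}$; because $1-s(1+d^2)<0$ this decays geometrically in $\ell$, the quasi‑polynomial factor being negligible, so the series converges and its tail tends to $0$. Note it is essential here that $n=O(\log\log q)$ — this is exactly what keeps Łuczak's count quasi‑polynomial rather than exponential.

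For $\Sigma^{\mathscr J}_m$ there is no upper bound on $n$, so Lemma~\ref{smk} is useless and I count geometrically. For a fixed order $2n$, the cylinders $I(1,\sigma_2,\dots,1,\sigma_{2n})$ with $q_{2n}\in[2^{\ell},2^{\ell+1})$ are pairwise disjoint intervals, each of length $\ge\bigl(q_{2n}(q_{2n}+q_{2n-1})\bigr)^{-1}\ge 2^{-2\ell-4}$, so there are at most $2^{2\ell+4}$ of them; and since $q_{2n}\ge 2^{\,n-1}$ forces $n\le\ell+2$, the total number of relevant cylinders in the block is $\le(\ell+2)2^{2\ell+4}$. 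As $|J_{n,x}|\le 2\cdot2^{-2\ell(1+d^2)}$, the block contributes at most $(\ell+2)2^{2\ell+4}\bigl(2\cdot2^{-2\ell(1+d^2)}\bigr)^s=O(\ell)\,2^{2\ell(1-s(1+d^2))}$, which again decays geometrically in $\ell$ since $s(1+d^2)>1$. Summing both series over $\ell\ge\log_2 m$ and letting $m\to\infty$ gives $\liminf_m(\Sigma^{\mathscr I}_m+\Sigma^{\mathscr J}_m)=0$, hence $\mathcal{H}^s(E(b,c))=0$; letting $s\downarrow\tfrac1{1+b^2}$ (equivalently $d\uparrow b$) completes the proof.

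The main obstacle is making the two counting mechanisms interlock. Over a single $q$ the counts are far too big, and the dyadic grouping only helps if the per‑block exponent is negative — which is why one needs the two \emph{different} ball sizes: in the $\mathscr I$–regime the largeness of $q$ pins $n$ down to $O(\log\log q)$ and Łuczak's lemma gives a count that costs only $2^{\ell(1+o(1))}$, matched against the radius $q^{-(1+d^2)}$; in the $\mathscr J$–regime $n$ is uncontrolled, so one must pay the crude disjointness bound $\sim q^2$ per block, and the doubled exponent $2(1+d^2)$ built into the radius of $J_{n,x}$ is there precisely so that $2s(1+d^2)>2$ absorbs that loss. Getting these balanced, and checking the diameter bookkeeping so the cover genuinely refines to $0$ as $m\to\infty$, is the delicate part; everything else is routine.
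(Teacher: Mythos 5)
Your proof is correct and follows essentially the same approach as the paper: build the cover from Proposition~\ref{cebc}, control $\mathscr{I}_q$ via \L uczak's lemma (Lemma~\ref{smk}) in dyadic blocks, and control $\mathscr{J}_q$ by a cruder count against the doubled radius exponent. The only noteworthy difference is in the $\mathscr{J}_q$ count: you obtain an $O(q^2\log q)$ bound per dyadic block from disjointness of cylinders of a fixed order plus the constraint $q_{2n}\geq 2^{\,n-1}$, whereas the paper observes that for fixed $q_{2n}=q$ the value of $q_{2n+1}\in(q,2q]$ determines the cylinder (since $q_{2n-1}=q_{2n+1}-q_{2n}$ determines the finite continued fraction for $q/q_{2n-1}$), giving the sharper $O(q)$ per $q$; both suffice because the radius $q^{-2(1+d^2)}$ of $J_{n,x}$ was chosen precisely so that $2s(1+d^2)>2$ absorbs either loss.
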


\begin{proof}
	We define a set function $\varLambda_\alpha(\mathscr{I})\coloneqq\sum_{i}\abs{I_i}^\alpha$, where $\mathscr{I}=\{I_1,I_2,\ldots\}$ and $\alpha>0$.
	
	We claim that \begin{equation*}
		\lim_{m\rightarrow\infty}\varLambda_\alpha(\bigcup_{q=m}^\infty\mathscr{I}_q\cup\bigcup_{q=m}^\infty\mathscr{J}_q)=0.
	\end{equation*}
	To prove it, we firstly count the number of the intervals that the collection $\bigcup_{q=1}^m\mathscr{I}_q$ contains. Notice that $c^{d^{2n}/3}\leq q_{2n}(x)=q\leq m$. We write $k=[\frac{1}{2}\log_d(3\log_cm)]$, then $n\leq k$. Since $\prod_{j=1}^na_{2j}(x)\leq q_{2n}(x)\leq m$, by lemma (\ref{smk}), we have that the number of such sequence of $\{a_{2j}(x)\}_{1\leq j\leq n}$ ($1\leq n\leq k$) is not more than $S(m,k)$. Note that $p_{2n}(x)$, $q_{2n}(x)$, and thus the interval $I_{n,x}$ are uniquely determined by $\{a_{2j}(x)\}_{1\leq j\leq n}$. Thus
	\begin{equation*}
		\#\bigcup_{q=1}^m\mathscr{I}_q\leq S(m,k)\leq m(2+\log m)^{\frac{1}{2}\log_d(3\log_cm)}.
	\end{equation*}
	
	Now for a sufficiently large $i\in\mathbb{N}$, the collection $\bigcup_{q=2^{i-1}}^{2^i}\mathscr{I}_q$ contains not more than $2^i(2+i\log2)^{\frac{1}{2}\log_d(3i\log_c2)}$ intervals of lengths at most $2^{1-(i-1)(1+d^2)}$. Thus we have
	\begin{align*}
		\sum_{i=1}^{\infty}\varLambda_\alpha(\bigcup_{q=2^{i-1}}^{2^i}\mathscr{I}_q)&\leq\sum_{i=1}^{\infty}2^i(2+i\log2)^{\frac{1}{2}\log_d(3i\log_c2)}2^{(1-(i-1)(1+d^2))\alpha}	\\
		&=2^{(2+d^2)\alpha}\sum_{i=1}^{\infty}2^{i(1-(1+d^2)\alpha)}(2+i\log2)^{\frac{1}{2}\log_d(3i\log_c2)}.
	\end{align*}
	Since $1-(1+d^2)\alpha\leq0$, the series above converges and we conclude the first part of our desired result 
	\begin{equation*}
		\lim_{m\rightarrow\infty}\varLambda_\alpha(\bigcup_{q=m}^\infty\mathscr{I}_q)\leq\lim_{j\rightarrow\infty}\sum_{i=j}^{\infty}\varLambda_\alpha(\bigcup_{q=2^{i-1}}^{2^i}\mathscr{I}_q)=0.
	\end{equation*}
	
	For the rest half of our claim, since $q_{2n+1}(x)\leq 2q_{2n}(x)\leq2q$, then we assert that $\mathscr{J}_q$ contains at most $2q$ intervals of lengths at most $2q^{-2(1+d^2)}$. Hence, 
	\begin{equation*}
		\sum_{q=1}^\infty\varLambda_\alpha(\mathscr{J}_q)\leq\sum_{q=1}^\infty 2q(2q^{-2(1+d^2)})^\alpha=2^{1+\alpha}\sum_{q=1}^\infty q^{1-2(1+d^2)\alpha}.
	\end{equation*}
	Since $1-2(1+d^2)\alpha<-1$, the series above converges which finish the rest half.
	
	Since $\alpha>{1}/({1+d^2})>{1}/({1+b^2})$ is arbitrary, by definition of Hausdorff dimension and proposition \ref{cebc}, $\dim_H E(b,c)\leq{1}/({1+b^2})$.	
\end{proof}

\subsection{Lower bound of the Hausdorff dimension of $E(b,c)$}

In this subsection, we finish our proof of theorem \ref{main4}. In fact, since $\widetilde{E}(b,c)\subset E(b,c)$, we have $\dim_H \widetilde{E}(b,c)\leq\dim_H E(b,c)$, we estimate the lower bound of $\dim_H \widetilde{E}(b,c)$.

By \cite[Example 4.6]{MR1102677}, if $[0,1]=E_0\supset E_1\supset\cdots\supset E_{k-1}\supset E_k\supset\cdots$ be a decreasing sequence of which is a finite union of disjoint closed intervals containing at least $m_k\geq2$ intervals of $E_{k}$ which are separated by gaps of lengths at least $\varepsilon_k>0$, and if $\varepsilon_k\geq\varepsilon_{k+1}>0$, then the Hausdorff dimension of the intersection of $E_k$ is at least
\begin{equation}\label{mkek}
	\liminf_{k\rightarrow \infty}\frac{\log(m_1\cdots m_{k-1})}{-\log(m_k\varepsilon_k)}.
\end{equation}
We directly use this example to finish our proof.

\begin{proof}[Proof of Theorem \ref{main4}]
	We construct a series of interval
	\begin{equation*}
		E_k\coloneqq\{x\in [0,1]\colon a_{2n+1}(x)=1, a_{2n}(x)=a_{2n}, 0\leq n\leq k,  c^{b^{2k+2}}\leq a_{2k+2}(x)\leq3c^{b^{2k+2}}\},
	\end{equation*}
	for $k\geq1$. Denote $m_k$ as the number of integers between $c^{b^{2k}}$ and $3c^{b^{2k}}$, and $\varepsilon_k$ as the infimum of length of gaps between $k$-level sets. And those intervals are separated by gaps of lengths at least $\varepsilon_k$ satisfying
	\begin{align*}
		\varepsilon_k&\geq \frac{2p_{2k}+p_{2k-1}}{2q_{2k}+q_{2k-1}}-\frac{p_{2k}}{q_{2k}}\geq\frac{1}{3\cdot q_{2k}^2}	\\
		&\geq\frac{1}{3\cdot 2^{2k}\cdot\prod_{j=1}^{k}(a_{2j}+1)^2}	\\
		&\geq\frac{1}{3\cdot 2^{2k}\cdot\prod_{j=1}^k (3\cdot c^{b^{2j}}+1)^2}	\\
		&\geq\frac{1}{3\cdot 2^{2k}\cdot 4^{2k}\cdot c^{2b^{2}(b^{2k}-1)/(b^{2}-1)}}.
	\end{align*}
	By (\ref{mkek}), we have \begin{align*}
		\dim_H \widetilde{E}(b,c)&\geq\liminf_{k\rightarrow \infty}\frac{\log(2^k c^{b^2}\cdots c^{b^{2k}})}{-\log(c^{b^{2k}}\cdot\frac{1}{3}\cdot2^{-6k}\cdot c^{-2b^{2}(b^{2k}-1)/(b^{2}-1)})}	\\
		&=\liminf_{k\rightarrow \infty}\frac{k\log2+({b^{2k}-b^2})\log c}{({b^{2k+2}+b^{2k}-2b^2})\log c+(b^2-1)(\log 3+6k\log 2)}	\\
		&=\frac{1}{1+b^2}.
	\end{align*}
	Therefore, we have ${1}/({1+b^2})\leq\dim_H \widetilde{E}(b,c)\leq\dim_H E(b,c)\leq {1}/({1+b^2})$ and finish our proof.
\end{proof}

\section{Generalization on the results of Hausdorff dimensions}\label{7}

In this section, we generalize our results above in order to give a complete determination on Hausdorff dimension of the set $\{x\in I_{even}\colon a_{2n}(x)\geq\phi(n)\ \text{i.m.}\ n\}$, where $\phi$ is an arbitrary positive function defined on natural numbers.

\begin{proof}[Proof of Theorem \ref{main5}]
	If $B=1$, for any $\varepsilon>0$, $\frac{\log\phi(n)}{2n}\leq\log(1+\varepsilon)$, i.e. $\phi(n)\leq(1+\varepsilon)^{2n}$ holds for $n$ infinitely many times. Let $\mathcal{L}=\{n\colon \phi(n)\leq(1+\varepsilon)^{2n}\}$. Then
	\begin{equation*}
		\{x\in I_{even}\colon a_{2n}(x)\geq(1+\varepsilon)^{2n}\ \textnormal{i.m.}\ n\in\mathcal{L}\}\subset F(\phi).
	\end{equation*}
	By Lemma \ref{lem4}, Theorem \ref{main3}, and Remark \ref{rem2}, we have
	\begin{equation*}
		s_1=\sup_{\varepsilon>0}s_{1+\varepsilon}\leq\dim_H F(\phi)\leq\dim_H I_{even}.
	\end{equation*}
	
	If $1\leq B\leq\infty$, for any $\varepsilon>0$, $\frac{\log\phi(n)}{2n}\leq\log(B+\varepsilon)$, i.e. $\phi(n)\leq(B+\varepsilon)^{2n}$ holds for $n$ infinitely many times. There also exists $N\in\mathbb{N}$, such that for all $n>N$, we have $\phi(n)\geq(B-\varepsilon)^{2n}$. Let $\mathcal{L}=\{n\geq N\colon \phi(n)\leq(B+\varepsilon)^{2n}\}$. Then
	\begin{align*}
		\{x\in I_{even}\colon a_{2n}(x)\geq(B+\varepsilon)^{2n}\ \textnormal{i.m.}\ n\in\mathcal{L}\}&\subset F(\phi) \\
		\subset\{x\in I_{even}&\colon a_{2n}(x)\geq(B-\varepsilon)^{2n}\ \textnormal{i.m.}\ n\in\mathcal{L}\}.
	\end{align*}
	By Lemma \ref{lem4}, Theorem \ref{main3}, and Remark \ref{rem2}, we have
	\begin{equation*}
		s_B=\sup_{\varepsilon>0}s_{B+\varepsilon}\leq\dim_H F(\phi)\leq\sup_{\varepsilon>0}s_{B-\varepsilon}=s_B.
	\end{equation*}
	
	If $B=\infty$, $b=1$, for any $\varepsilon>0$, $\frac{\log\log\phi(n)}{2n}\leq\log(1+\varepsilon)$, i.e. $\phi(n)\leq e^{(1+\varepsilon)^{2n}}$ holds for $n$ infinitely many times. Let $\mathcal{L}=\{n\colon \phi(n)\leq\ e^{(1+\varepsilon)^{2n}}\}$. Then
	\begin{align*}
		\{x\in I_{even}\colon a_{2n}(x)\geq e^{(1+\varepsilon)^{2n}}\ \textnormal{i.m.}\ n\in\mathcal{L}\}\subset F(\phi).
	\end{align*}
	By Theorem \ref{main4}, we have
	\begin{equation*}
		\dim_H F(\phi)\geq \lim_{\varepsilon\rightarrow 0}\frac{1}{1+(1+\varepsilon)^2}=\frac{1}{2}.
	\end{equation*}
	On the other hand, since $B=\infty$, we have for any $B_1>0$, $\frac{\log\phi(n)}{2n}\geq\log B_1$, i.e. $\phi(n)\geq B_1^{2n}$ holds for $n$ ultimately. Then 
	\begin{equation*}
		F(\phi)\subset\{x\in I_{even}\colon a_{2n}(x)\geq B_1^{2n}\ \text{i.m.}\ n\}.
	\end{equation*}
	By Lemma \ref{lem4}, Theorem \ref{main3}, and Remark \ref{rem2}, we have
	\begin{equation*}
		\dim_H F(\phi)\leq\lim_{B_1\rightarrow\infty}s_{B_1}=\frac{1}{2}.
	\end{equation*}
	
	If $B=\infty$, $1<b<\infty$, for any $\varepsilon>0$, $\frac{\log\log\phi(n)}{2n}\leq\log(b+\varepsilon)$, i.e. $\phi(n)\leq e^{(b+\varepsilon)^{2n}}$ holds for $n$ infinitely many times. Let $\mathcal{L}=\{n\colon \phi(n)\leq\ e^{(b+\varepsilon)^{2n}}\}$. Similarly, we have $\phi(n)\geq e^{(b-\varepsilon)^{2n}}$ holds for $n$ ultimately. Hence,
	\begin{align*}
		\{x\in I_{even}\colon a_{2n}(x)\geq e^{(b+\varepsilon)^{2n}}\ \text{i.m.}\ n\in\mathcal{L}\}&\subset F(\phi)    \\
		&\subset\{x\in I_{even}\colon a_{2n}(x)\geq e^{(b-\varepsilon)^{2n}}\ \text{i.m.}\ n\}.
	\end{align*}
	By Theorem \ref{main4}, we have
	\begin{equation*}
		\frac{1}{1+b^2}=\lim_{\varepsilon\rightarrow 0}\frac{1}{1+(b+\varepsilon)^2}\leq\dim_H F(\phi)\leq \lim_{\varepsilon\rightarrow 0}\frac{1}{1+(b-\varepsilon)^2}=\frac{1}{1+b^2}.
	\end{equation*}
	
	If $B=\infty$, $b=\infty$, for any $b_1>0$, $\frac{\log\log\phi(n)}{2n}\geq\log b_1$, i.e. $\phi(n)\geq e^{b_1^{2n}}$ holds for $n$ ultimately, we have
	\begin{equation*}
		F(\phi)\subset\{x\in I_{even}\colon a_{2n}(x)\geq e^{b_1^{2n}}\ \text{i.m.}\ n\}.
	\end{equation*}
	By Theorem \ref{main4}, we have
	\begin{equation*}
		\dim_H F(\phi)\leq\lim_{b_1\rightarrow\infty}\frac{1}{1+b_1^2}=0.
	\end{equation*}
	This completes the proof of the theorem.
\end{proof}

\bibliographystyle{plain}
\bibliography{bibtex}

$\newline$\textsc{Yuefeng Tang, School of Mathematics and Statistics, Wuhan University, Wuhan, Hubei, China}
$\newline$\textit{E-mail: }\texttt{tangyuefeng2001@whu.edu.cn}

\end{document}